\pgfplotsset{compat=1.16}
\newcommand{\enint}{E^{\text{in}}}
\newcommand{\enext}{E^{\text{ex}, \alpha}}
\newcommand{\enexh}{E^{\text{ex}, h}}
\newcommand{\enexhq}{E^{\text{ex}, h}_{\text{quasi}}}
\newcommand{\equasi}{E^{\text{in}}_{\text{quasi}}}
\newcommand{\exquasi}{E^{\text{ex},\alpha}_{\text{quasi}}}
\newcommand{\dint}{\mathcal{D}^{\text{in}}}
\newcommand{\dext}{\mathcal{D}^{\text{ex}}}
\newcommand{\CT}{\mathcal{C}_{[2, s]}}
\newcommand{\Sext}{\Sigma^{\text{ex}}}
\newcommand{\Xext}{X^{\text{ex}, \alpha}}
\newcommand{\scal}{\mathcal{S}}
\newcommand{\ec}{E^{\text{c,in}}}
\newcommand{\ecin}{E^{\text{c,in}}}
\newcommand{\exec}{E^{\text{c,ex}}}
\newcommand{\ecex}{E^{\text{c,ex}}}
\newcommand{\hin}{\mathcal{H}^{\text{in}}}
\newcommand{\hex}{\mathcal{H}^{\text{ex}}}
\newcommand{\op}{\overline{\partial}}
\newcommand{\hcal}{\mathcal{H}}
\newtheorem{theorem}{Theorem}
\newtheorem{lemma}{Lemma}
\newtheorem{proposition}[lemma]{Proposition}
\newtheorem{corollary}[lemma]{Corollary}
\numberwithin{lemma}{section}
\newcommand{\q}[1]{\mathcal{#1}}
\newcommand{\ep}{\epsilon}
\newcommand{\m}[1]{\mathbb{#1}}
\numberwithin{equation}{section}
\newcommand{\Wf}{\mathbf{W}}
\newcommand{\Ff}{\mathbf{F}}
\newcommand{\Wt}{\mathtt{W}}
\newcommand{\ut}{\mathtt{u}}
\newcommand{\rd}{\partial}
\begin{document}
	
	\title{Global well-posedness for a system of quasilinear wave equations on a product space}

\begin{abstract}
	We consider a system of quasilinear wave equations on the product space $\m R^{1+3}\times \m S^1$, which we want to see as a toy model for Einstein equations with additional compact dimensions. We show global existence for small and regular initial data with polynomial decay at infinity. The method combines energy estimates on hyperboloids inside the light cone and weighted energy estimates outside the light cone.
\end{abstract}

	\author{C\'ecile Huneau}
	\address{\'Ecole Polytechnique \& CNRS}
	\email{cecile.huneau@polytechnique.edu}
	
	\author{Annalaura Stingo}
	\address{\'Ecole Polytechnique}
	\email{annalaura.stingo@gmail.com, annalaura.stingo@polytechnique.edu}

	\maketitle
	
	\section{Introduction}
	
	In the present article we address the problem of global existence of small solutions to a certain class of quasilinear systems of wave equations on the product space $\mathbb{R}^{1+3}\times \m S^1$. The system under consideration has the following form
	\begin{equation}
		\label{syteme}
		\begin{cases}
			&  \Box_{x,y} u + u\partial_y^2 u = \sum_{1\le i,j\le 2} \textbf{N}_1(w_i,w_j)\\
			& \Box_{x,y} v+u\partial_y^2 v = \sum_{1\le i,j\le 2} \textbf{N}_2(w_i,w_j)
		\end{cases}\qquad (t,x,y)\in  \m R^{1+3}\times \m S^1
	\end{equation}
	with initial conditions set at time $t_0=2$
	\begin{equation} \label{data}
		(u,v)(2, x, y)=(\phi_0, \psi_0)(x,y), \qquad (\partial_t u, \partial_t v)(2,x,y)=(\phi_1,\psi_1)(x,y).
	\end{equation}
	In the above system $\Box_{x,y} = -\partial^2_t + \Delta_x + \partial_y^2$ denotes the D'Alembertian operator in the $(t,x,y)$ variables where $t\in \m R$ is the time coordinate, $x=(x_1, x_2, x_3)\in\mathbb{R}^3$ are the Cartesian coordinates and $y\in\m S^1$ is the periodic coordinate.
	The nonlinearities $\textbf{N}_1(\cdot, \cdot), \textbf{N}_2(\cdot, \cdot)$ are linear combination of the following quadratic null forms:
	\begin{equation}\label{null_forms}
		\begin{aligned}
			& Q_0(\phi,\psi)= \partial_t\phi\, \partial_t\psi - \nabla_x \phi \cdot \nabla_x\psi  \\
			& Q_{ij}(\phi,\psi)= \partial_i\phi\, \partial_j\psi - \partial_i\phi \,\partial_i\psi \qquad 1\le i<j\le 3\\
			& Q_{0i}(\phi,\psi) = \partial_t\phi \, \partial_i\psi -\partial_i \phi \, \partial_t \psi \qquad 1\le i \le 3
		\end{aligned}
	\end{equation}
	where $\partial_j$ denotes the derivative $\partial_{x_j}$ in the $j$th direction for $j=\overline{1,3}$, and $w_1, w_2=\{u,v\}$. 
	
	The main result we present in this paper asserts the global existence of solutions to \eqref{syteme} when the initial data are small and localized real functions. Our result also extends to semilinear interactions of the form $\partial_\alpha w_i \cdot\partial_y w_j$ with $\partial_\alpha$ being any of the derivatives in the $(t,x,y)$-variables.
	
	\subsection{Motivation and a brief history}
	Our interest in studying nonlinear wave equations on product spaces comes from the theory of supergravity (SUGRA) in physics and more precisely from the Kaluza-Klein theory, which represents
	the classical approach to the unification of general relativity with electromagnetism and more generally with gauge fields 
	(see  the orignal works of Kaluza \cite{Kaluza21} and Klein \cite{Klein26}). 
	The Kaluza-Klein approach considers general relativity in $3+1+d$ dimensions with space-time factorizing as
	\[
	\mathcal{M}^{(3+1+d)} = \mathbb{R}^{1+3}\times K
	\]
	where $K$ is a compact $d$-manifold referred to as \textit{internal space}. 
	In the simplest case $d=1$ dimensional gravity is compactified on a circle ($K=\mathbb{S}^1)$ to obtain at low energies a coupled Einstein-Maxwell-Scalar system in 3+1 space-time dimensions.
	Kaluza-Klein space-times $\m R^{1+3}\times \m S^1$ have been studied by Witten in his influential work \cite{WITTEN}, where he proves instability at the semiclassical level but provides heuristic arguments for classical stability.
	The first result proving the classical stability of the Kaluza-Klein theory is obtained by Wyatt in \cite{Wyatt18}, where only perturbations depending on the non-compact coordinates are considered, using tools developed by Lindblad and Rodninaski \cite{LR10}. More general spacetimes with supersymmetric compactifications $\mathcal{M}=\mathbb{R}^{1+n}\times K$ have recently been studied by Andersson, Blue, Wyatt, and Yau in \cite{ABWY}. The space-times $\mathcal{M}$ are equipped with the metric
	\[
	\hat{g} = \eta_{\mathbb{R}^{1+n}} + k
	\]
	where $\eta_{\mathbb{R}^{1+n}} $ is the Minkowski metric in $\mathbb{R}^{1+n}$ and $k$ is such that $(K,k)$ is a compact Ricci-flat Riemannian manifold having a cover that admits a spin structure and a nonzero parallel spinor. A global stability result is proved in \cite{ABWY} under the assumption $n\ge 9$ and for Cauchy data that are Schwarzschild near infinity, but it is conjectured that these conditions can be relaxed and that space-times with a supersymmetric compactification and $n=3$ are nonlinearly stable.
	
	In both the aforementioned works \cite{Wyatt18} and \cite{ABWY}, as well as in many other works concerning the global stability problem for Einstein equations, the use of the so-called \textit{wave-coordinates}  allows one to write the Einstein equations as a system of quasilinear wave equations on the metric $g=(g^{\alpha \beta})_{\alpha \beta}$
	\begin{equation}\label{waveccord}
		g^{\alpha \beta}\partial_{\alpha }\partial_{\beta} g_{\mu\nu}=P_{\mu \nu}(\partial g,\partial g) + G_{\mu\nu}(g)(\partial g, \partial g)
	\end{equation}
	where the sum is taken over repeated indices, $P_{\mu \nu}$ are quadratic forms and $G_{\mu\nu}(g)(\partial g, \partial g)$ contain cubic terms.
	In the present paper we focus on a toy model for the Einstein equations on $\m R^{1+3}\times \m S^1$ that only keeps a selection of terms from equation \eqref{waveccord}, precisely the semilinear terms with null structure and the quasilinear term $g^{yy}\partial^2_y$ where $y$ is the periodic coordinate on $\m S^1$. Our goal here is in fact to study the global well-posedness for quasilinear wave equations on the product space $\m R^{1+3}\times \m S^1$ without having to make use of the full structure of the Einstein equations. The unknown $u$ in system \eqref{syteme} plays the role of the coefficient $g_{yy}$ and the unknown $v$ encodes any other metric coefficient $g_{\mu\nu}$.
	
	We mention here that wave equations on product spaces also appear in other contexts, for instance when studying the propagation of waves along infinite homogeneous waveguides (see \cite{LeskyRacke, MSS05, MS2008, ettinger}).
	
	\smallskip
	One key observation when studying the small data global well-posedness problem for wave equations on product spaces $\m R^{1+3}\times \m T^d$ is that they are closely related to infinite systems coupling wave and Klein-Gordon equations on the flat space $\m R^{1+3}$. In fact, if $W=W(t,x,y)$ is solution to 
	\[
	\Box_{x,y}W = F\, \qquad (t,x,y)\in \m R^{1+3}\times \m T^d
	\]
	for some source term $F$, its Fourier modes in the periodic direction $\{W_k(t,x)\}_{k\in \m Z}$ solve the following equations
	\[
	(-\partial^2_t + \Delta_x)W_k - |k|^2 W_k = \int_{\m T^d} e^{-iy\cdot k} F dy, \qquad (t,x)\in \m R^{1+3}.
	\]
	In particular, the zero-mode $W_0$ is solution to a wave equation and any other non-zero mode $W_k$ solves a Klein-Gordon equation of mass $|k|$, all on the flat space $\m R^{1+3}$.
	
	The global well-posedness for systems coupling a finite number of wave and Klein-Gordon equations on the flat 3+1 space-time with small data have largely been studied. We cite the initial results by Georgiev \cite{Georgiev:system} and Katayama \cite{katayama:null_condition}, followed by LeFloch and Ma  \cite{LeFloch_Ma, LeFloch-Ma:global_nl_stability}, Wang \cite{Q.Wang, Q.Wang:E-KG} and Ionescu and Pausader \cite{IoPa, IP2} who study such systems as a model for the full Einstein-Klein-Gordon equations. In \cite{LeFloch_Ma, Q.Wang} global well-posedness is proved for compactly supported initial data and quadratic quasilinear nonlinearities that satisfy some suitable conditions, including the \textit{null condition} of Klainerman \cite{klainerman:null_condition} for self-interactions between the wave components of the solution. An idea used in these works is that of employing hyperbolic coordinates
	in the forward light cone; this was first introduced by Klainerman \cite{klainerman:global_existence} for Klein-Gordon equations and Tataru in the wave context \cite{Tataru2002}, and later reintroduced by LeFloch and Ma in \cite{LeFloch_Ma} under the name of \textit{hyperboloidal foliation method}.
	In \cite{IoPa} global regularity and scattering is proved in the case of small smooth initial data that decay at a suitable rate at infinity and nonlinearities that do not verify the null condition but present a particular resonant structure. We also cite the work by Dong and Wyatt \cite{DW20}, who prove global well-posedness for a quadratic semilinear interaction in which there are no derivatives on the massless wave component. Other related results are \cite{Bachelot88, OTT_KGZ, T96_KGZ, T03_DP, T03_MH, klainerman_wang_yang, DLFW} and see \cite{ma:2D_semilinear, Ma2020, ma:2D_tools, ma:2D_quasilinear, stingo_WKG, IS2019, DW20_2d, ma:1D_semilinear} for results about wave-Klein-Gordon systems in lower dimensions.
	
	\smallskip
	Our goal in this paper is to prove the global stability for \eqref{syteme} in the case where the initial data are not compactly supported but only have a mild polynomial decay at infinity. Our approach makes use of the vector field method by Klainerman \cite{CK93} and follows LeFloch and Ma \cite{LeFloch-Ma:global_nl_stability} in that a big portion of the estimates recovered for the solution in the interior of the cone $\{t=r+1\}\times \m S^1$ are estimates on hyperboloids. The main difference with \cite{LeFloch_Ma} is that the interior estimates need to be coupled with exterior estimates in the region outside the cone. Those are weighted energy estimates that we have to propagate in time.

	\subsection{The main result}
	In order to describe the initial data we consider for our problem we introduce the energy space $\hcal^0$ endowed with the norm
	\[
	\left\|(u[t],v[t])\right\|^2_{\mathcal{H}^0} := \|u\|^2_{H^1_{xy}} + \|u_t\|^2_{L^2} + \|v\|^2_{H^1_{xy}}+ \|v_t\|^2_{L^2}
	\]
	and the higher order energy spaces $\mathcal{H}^n$ for $n\ge 1$ endowed with the norm
	\[
	\left\|(u[t],v[t])\right\|^2_{\hcal^n} := \sum_{|\alpha| \le  n} \left\|(\partial^\alpha_{xy} u[t],\partial^\alpha_{xy} v[t])\right\|^2_{\hcal^0}
	\]
	where we use the following notation for the Cauchy data in \eqref{data} at time $t$:
	\[
	(u[t],v[t]) := (u(t), u_t(t), v(t), v_t(t)).
	\]
	The global well-posedness result that is the object of this paper is proved under some decay assumptions on the initial data.
	A premilinary version of our main theorem states the following
	\begin{theorem}\label{main}
		Assume the initial data $(u[2], v[2])$ for \eqref{syteme} satisfy
		\[
		\sum_{k=0}^5 \left\| \langle x\rangle^{\le k + {\frac{\alpha}{2}}}\partial^k_{xy}(u[2], v[2])\right\|_{\hcal^0} \le \ep \ll 1,
		\]
		for some positive fixed $\alpha$ and $\langle x \rangle = \sqrt{1+|x|^2}$.
		Then the system \eqref{syteme} is globally well-posed in the space $\hcal^5$.
	\end{theorem}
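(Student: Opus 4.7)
The plan is to run a bootstrap argument on two families of energies tied to a spacetime splitting: in the exterior region $\{t\le r+1\}\times\m S^1$ I will propagate weighted energies $\enext$ of Lindblad--Rodnianski type with weight of order $(1+r-t)^{1+\alpha}$, commuted with translations, rotations, the scaling vector field $t\partial_t+r\partial_r$, and $\partial_y$; inside the forward cone $\{t\ge r+1\}\times\m S^1$ I will foliate by hyperboloids $\hcal_s=\{t^2-|x|^2=s^2,\ t\ge 2\}\times\m S^1$ (truncated at $\{t=r+1\}$) and propagate hyperboloidal energies $\enint$ using commutators with the full Poincar\'e algebra, including the Lorentz boosts. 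The splitting is forced by the polynomial decay assumption on the data: a purely hyperboloidal treatment would ask for finite weighted energy on $\hcal_2$ after commutation by boosts, which the data do not provide, and boosts behave badly when acting on non-compactly supported functions in the exterior. Throughout the analysis it will be convenient to Fourier decompose $u,v$ in $y$ so that the zero modes satisfy wave equations on $\m R^{1+3}$ (coupled to all other modes through the quasilinear term and the null semilinearity) while the $k$-modes with $k\neq 0$ satisfy Klein--Gordon equations of mass $|k|$; the analysis will then track the wave character of the zero mode and the massive character of the other modes separately.

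For the exterior estimates, the multiplier $(1+r-t)^{1+\alpha}\partial_t$ applied to $\Box_{x,y}+u\partial_y^2$ produces a quasilinear error of size $\|u\|_{L^\infty}$ times the top-order weighted energy, absorbed by the bootstrap smallness of $u$; the semilinear sources are handled via the null structure of $\textbf{N}_1,\textbf{N}_2$, which trades each raw derivative $\partial w$ for a tangential derivative $\op w$ carrying an extra $(1+r-t)^{-1}$ weight and hence gaining integrability in the exterior region. For the interior, data on $\hcal_2$ come directly from the Cauchy data, and for $s>2$ the exterior weighted energies provide the boundary flux through $\{t=r+1\}$ needed to continue the propagation. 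Klainerman--Sobolev inequalities adapted to hyperboloids then yield the expected pointwise decay $|u_0|\lesssim \ep t^{-1}$ for the wave zero-mode and $|u_k|,|v_k|\lesssim \ep t^{-3/2}$ for the massive modes; combined with the null structure these rates close the hyperboloidal energy inequalities for both the semilinear sources and the quasilinear correction.

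The main obstacle will be the matching between the two regions together with the quasilinear term. Three points look delicate: (i) the flux of $\enext$ through $\{t=r+1\}\times\m S^1$ must be produced with the correct $\alpha$-weight so as to feed the hyperboloidal energies without loss of regularity or of weight; (ii) the commutators $[Z,u\partial_y^2]$ with a hyperboloidal vector field $Z$ produce terms in which $Zu$ is differentiated twice in $y$, whose placement at subtop order requires a careful hierarchy between the standard energies $\enint,\enext$ and the quasilinear-adapted energies $\equasi,\exquasi$ anticipated in the notation of the paper; (iii) the wave zero-mode decays only like $t^{-1}$, which is marginal for closing the exterior weighted estimate, so the null structure of $\textbf{N}_1,\textbf{N}_2$ must be exploited essentially optimally and $\alpha$ has to be chosen compatibly with the regularity threshold of five derivatives. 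Once these three points are controlled, the bootstrap closes with a strictly smaller constant on the right-hand side, and the theorem follows by the standard continuity argument, yielding global existence in $\hcal^5$ for $\ep$ sufficiently small.
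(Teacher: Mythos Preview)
Your overall architecture---exterior weighted energies feeding an interior hyperboloidal bootstrap, with the $y$-Fourier decomposition into a wave zero-mode and massive non-zero modes---matches the paper. Two points, however, diverge from the paper in ways that matter.

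First, you propose to commute in the exterior only with translations, rotations, $\partial_y$ and the scaling $\scal=t\partial_t+r\partial_r$, explicitly avoiding boosts there. The paper does the opposite: it commutes with the full family $\mathcal{Z}$ \emph{including} the boosts $\Omega_{0i}$ in the exterior, and handles the resulting unweighted $Z^\beta W$ via a weighted Hardy inequality (Corollary~\ref{corhar}) that trades one power of $(2+r-t)$ for a derivative. Your alternative is not obviously equivalent: $\scal$ does not commute with $\Box_{x,y}$ (one picks up $-2\Box_x=-2\Box_{x,y}+2\partial_y^2$, i.e.\ an extra mass-like term on the non-zero modes), and more importantly the interior hyperboloidal argument needs boundary flux through $\{t=r+1\}$ for quantities commuted with boosts, which your exterior scheme does not directly produce.

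Second, and this is the main gap, your item~(ii) misidentifies the resolution of the commutator $[Z^\beta,u\partial_y^2]$. The dangerous piece is $Z^\beta u_0\cdot\partial_y^2\Wt$ with $Z^\beta$ a pure product of Klainerman fields: $Z^\beta u_0$ is \emph{not} controlled by any of the energies $\enint$, $\enext$, $\equasi$, $\exquasi$ (there is no derivative on it and no $\partial_y$), so no ``hierarchy between standard and quasilinear energies'' helps. The paper controls $\|Z^\beta u_0\|_{L^2(\hin_s)}$ via the \emph{conformal} energy $\ecin_k(s,W_0)$, which is introduced precisely for this purpose and shown to grow like $s^{1+2\mu_k}$. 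Even then the resulting bound on $\|Z^\beta u_0\cdot\partial_y^2\Wt\|_{L^2}$ is only $\lesssim\ep^2 s^{-1}$, which is borderline non-integrable; consequently the Klein--Gordon energies $\enint_{5,k}(s,\Wt)$ are \emph{not} propagated uniformly but are allowed a slow growth $s^{2\delta_k}$ with $\sigma\ll\delta_k\ll\delta_{k+1}$. Closing this in turn requires sharp $t^{-3/2}$ pointwise bounds on $\Wt$ that are \emph{not} available from Klainerman--Sobolev (which would inherit the $s^{\delta_k}$ loss); they are obtained instead by an ODE/Gronwall argument along scaling lines on the Klein--Gordon equation (Proposition~\ref{lmkg}) together with an $L^\infty$--$L^\infty$ estimate for $ZW_0$. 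None of these ingredients---conformal energy, slowly growing KG energies with a $\delta_k$-hierarchy, and the refined KG pointwise machinery---appear in your outline, and without them the interior bootstrap does not close.
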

	We remark here that our choice to set the initial data at time $t_0=2$ over the conventional $t_0=0$ is more convenient for our computations and comes at no expense as the system \eqref{syteme} is invariant under time translations.
	
	\subsection{The wave-Klein-Gordon structure} The Cauchy problem \eqref{syteme}-\eqref{data} can be written in a more compact form as a vector equation for the unknown $W = (u,v)^T$
	\begin{equation}\label{eq_W}
		\Box_{x,y}W +u \partial^2_y W = \mathbf{N}(W,W)
	\end{equation}
	with data
	\begin{equation}\label{data_W}
		W|_{t=2} = \Phi_0, \qquad \partial_t W|_{t=2} = \Phi_1
	\end{equation}
	where $\Phi_0 = (\phi_0, \psi_0)^T$ and $\Phi_1 = (\phi_1, \psi_1)^T$ and
	\[
	\mathbf{N}(W,W) = \sum_{1\le i,j\le 2}
	\begin{pmatrix}
		\textbf{N}_1(w_i,w_j) \\
		\textbf{N}_2(w_i,w_j)
	\end{pmatrix}.
	\]
	The projection of $W$ onto the periodic direction $y$ reveals the nature of the equation \eqref{eq_W} as a system coupling one (vector) wave equation with an infinite sequence of (vector) Klein-Gordon equations of mass $|k|$ with $k\in\mathbb{Z}^*$. If we denote by $W_k = (u_k, v_k)^T$ the projection of $W$ onto the $k$-th frequency
	\[
	W_k(t,x) = \int_{\m S^1} e^{-iky} W(t,x,y)\, dy, \qquad k\in\mathbb{Z}
	\]
	we see that the functions $\{W_k\}_k$ satisfy the following coupled system
	\[
	\begin{cases}
		(-\partial^2_t + \Delta_x) W_k - |k|^2(1+u_0) W_k= \int_{\m S^1} e^{-iky} \mathbf{N}(W,W)\, dy - \int_{\m S^1} e^{-iky} (u-u_0) \partial^2_y W\, dy \\
		k\in\mathbb{Z}.
	\end{cases}
	\]
	The zero mode $W_0$ is solution to a wave equation while any other non-zero mode $W_k$ is solution to a nonlinear Klein-Gordon equation of mass $|k|$. For our analysis this distinction will be fundamental and we will often work throughout the paper with the following decomposition of $W$
	\begin{equation} \label{dec}
		W = W_0 + \mathtt{W}, \qquad   \mathtt{W}(t,x,y) = \sum_{k\ne 0} e^{iky}W_k(t,x),
	\end{equation}
	so that the equation \eqref{eq_W} is equivalent to the following system
	\begin{equation}\label{syst_freq}
		\begin{cases}
			(-\partial^2_t + \Delta_x) W_0 = \int_{\m S^1}\mathbf{N}(W,W)\, dy + \int_{\m S^1} \partial_y \mathtt{u}\, \partial_y \mathtt{W}\, dy\\[5pt]
			(-\partial^2_t + \Delta_x) \Wt+ (1+ u)\,  \partial^2_y \mathtt{W} = \mathbf{N}(W,W) - \int_{\m S^1}\mathbf{N}(W,W)\, dy - \int_{\m S^1} \partial_y \mathtt{u}\, \partial_y \mathtt{W}\, dy.
		\end{cases}
	\end{equation}
	We are now able to state a more precise version of the main theorem.
	\begin{theorem}
		Assume that for some positive fixed $\alpha$ the initial data for \eqref{eq_W} satisfy
		\[
		\sum_{k=0}^5\left\|x^{\le k+\frac{\alpha}{2}}\partial^k_{xy} W[2] \right\|_{\mathcal{H}^0}\le \epsilon \ll 1.
		\]
		Then the solution $W$ to \eqref{eq_W}-\eqref{data_W} exists globally in time in $\mathcal{H}^5$ and the two components of the solution $W_0=\int_{\m S^1}W\, dy$ and $\mathtt{W}=W-W_0$ satisfy the following pointwise bounds
		\[
		|\partial^j_{tx} W_0(t,x)|\lesssim \epsilon \langle t+|x|\rangle^{-1}\langle t-|x|\rangle^{-\frac{1}{2}}, \quad j=\overline{1,3}
		\]
		\[
		\left\|\partial^j_y \partial^k_{tx} \mathtt{W}(t,x,\cdot)\right\|_{L^2_y(\m S^1)}\lesssim \epsilon \langle t+|x|\rangle^{-\frac{3}{2}}, \quad j=\overline{0,1},\,  k=\overline{0,1},
		\]
		\[
		\left\|\partial^j_y \partial^2_{tx} \mathtt{W}(t,x,\cdot)\right\|_{L^2_y(\m S^1)}\lesssim \epsilon \langle t+|x|\rangle^{-1}\langle t-|x|\rangle^{-\frac{
				1}{2}}, \quad j=\overline{0,1}.
		\]
	\end{theorem}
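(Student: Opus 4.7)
The plan is a bootstrap argument on a region-dependent energy, organized around the decomposition $W = W_0 + \Wt$ from \eqref{dec} and the corresponding system \eqref{syst_freq}. Split the half-space $\{t\geq 2\}$ into an interior region $\{t \geq r+1\}$ foliated by the hyperboloids $\hcal_s = \{t^2 - |x|^2 = s^2\}$ with $s\geq s_0 := \sqrt{3}$, and an exterior region $\{t \leq r+1\}$ foliated by flat time slices. In the interior I would use the hyperboloidal energy coming from the multiplier $\partial_t$ on $\hcal_s$ (which controls both $\partial_t$, the boosted derivatives $\underline{\partial}_i = \partial_i + (x_i/t)\partial_t$, and $(s/t)\Wt$, this last term being the manifestation of the Klein-Gordon mass on $\hcal_s$); in the exterior I would use a $t$-slice energy with ghost weight $\langle r-t\rangle^{1+\alpha}$ à la Alinhac. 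Both energies would be commuted with $\partial_y$ and with the admissible vector fields: the translations and rotations everywhere, the Lorentz boosts $L_i = x_i \partial_t + t\partial_i$ (which commute with $\Box_{x,y}$ and preserve the $\partial_y^2$ term) in the interior, and $L_i$ plus the scaling $S = t\partial_t + x\cdot \nabla_x$ in the exterior. I would carry five commutations so as to apply Klainerman--Sobolev in the end.

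The core of the argument is to close bootstrap assumptions of the form $E^{\mathrm{in}}_n(s) \lesssim \epsilon s^{\delta_n}$ on the hyperboloidal side and $E^{\mathrm{ex}}_n(t) \lesssim \epsilon$ (or with mild $t^{\delta}$ growth) on the exterior side, for a hierarchy of $\delta_n$ and for $n$ up to $5$. The exterior estimate is carried out first: commuting \eqref{eq_W} with $\partial_y^j \Gamma^I$ (where $\Gamma$ ranges over $\{\partial_\alpha, L_i, \Omega_{ij}, S\}$) and using the standard identity for the weighted multiplier, the null structure of $\mathbf{N}_1, \mathbf{N}_2$ produces an integrable good-derivative $\times$ good-derivative spacetime integral that Grönwall absorbs, while the quasilinear term $u \partial_y^2 W$ is integrated by parts against $\partial_t W$ so that the top-order quasilinear contribution becomes a commutator controlled by $\partial_t u \cdot \partial_y^2 W$, which is again cubic and small. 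In the interior, the hyperboloidal energy estimate for $W_0$ uses the null forms to gain a factor $s/t$ that compensates the loss in the Klainerman--Sobolev inequality on $\hcal_s$, and the estimate for $\Wt$ uses the mass-induced $L^2$ control of $\Wt$ itself to place Klein-Gordon $\times$ Klein-Gordon source terms without derivative loss. Initial data on the first hyperboloid $\hcal_{s_0}$ are obtained by restricting the Cauchy data on $\{t=2\}$ and using the exterior estimate propagated in the cone's slab $\{2 \leq t \leq r+1\}$ to control the part of $\hcal_{s_0}$ lying outside $\{t=2\}\cap\{|x|\leq 1\}$; the polynomial-decay hypothesis on the data is precisely what feeds this transfer.

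The principal obstacle I expect is the glueing of the two foliations. Concretely, the flux of the hyperboloidal energy through the lateral boundary $\hcal_s \cap \{t = r+1\}$ must be bounded by the exterior energy on that same slice, in such a way that the Grönwall-in-$s$ iteration is not destroyed by $t$-growth coming from the exterior region; this forces a careful choice of the weight exponent $\alpha$ and of the hierarchy $\delta_n$. A secondary delicacy is the quasilinear factor $(1+u)\partial_y^2 \Wt$, which produces a boundary term on $\partial\hcal_s$ when one does the multiplier estimate; coercivity of the energy requires $u$ to be small, and closing this up to five derivatives requires that the pointwise bound $|u|\lesssim \epsilon \langle t+r\rangle^{-1}\langle t-r\rangle^{-1/2}$ be available already at the bootstrap step for $W_0$, so the ordering of the estimates (first improve $W_0$, then $\Wt$) matters. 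Once these are handled, Klainerman--Sobolev on each foliation delivers the stated pointwise decay for $W_0$ and $\Wt$, the bootstrap closes, and global existence in $\hcal^5$ follows.
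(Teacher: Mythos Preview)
Your high-level architecture matches the paper's: exterior region with weighted flat-slice energies, interior region with hyperboloidal energies, null structure to gain decay, and a boundary-flux transfer across $\{t=r+1\}$. However, there is a genuine gap in the closing step. You assert that once the bootstrap energies are under control, Klainerman--Sobolev ``delivers the stated pointwise decay for $W_0$ and $\Wt$''. But your own bootstrap assumption allows $E^{\mathrm{in}}_n(s)\lesssim \epsilon s^{\delta_n}$, and Klainerman--Sobolev on $\hcal_s$ then only yields $\|\Wt\|_{L^\infty}\lesssim \epsilon t^{-3/2}s^{\delta}$, which is \emph{not} the sharp $\langle t+r\rangle^{-3/2}$ bound in the theorem and, more importantly, is too weak to close the bootstrap for the problematic source term $Z^\beta u_0\cdot\partial_y^2\Wt$ appearing in the $\Wt$-equation. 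The paper handles this with two pieces of machinery you do not mention: an ODE-in-$s$ argument along scaling rays (Proposition~\ref{lmkg} and~\ref{prop:improved_KG}) that upgrades the Klein-Gordon pointwise decay of $\Wt$ to the sharp rate independent of $\delta_k$, and an $L^\infty$--$L^\infty$ estimate \`a la Alinhac (Lemma~\ref{supint}, Proposition~\ref{prpw0}) to propagate a separate pointwise bootstrap assumption $|ZW_0|\le 2B\epsilon t^{-1}s^\sigma$. These improved pointwise bounds feed back into the energy estimates and are what make the scheme close.

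A second omission is the conformal energy $\ecin_k(s,W_0)$. The hyperboloidal energy only controls $(s/t)\partial W_0$ and $\bar\partial W_0$, not $Z^\beta W_0$ itself in $L^2$; yet precisely this undifferentiated quantity enters the source term $Z^\beta u_0\cdot\partial_y^2\Wt$ when $\mathcal{Z}^{\gamma_1}$ is a pure product of boosts. The paper resolves this via the interior and exterior conformal energy inequalities (Propositions~\ref{Prop:Conf_En}, \ref{prop:conf_ext}, \ref{Prop:conf_estimate}), which give $\ecin_k(s,W_0)\lesssim \epsilon^2 s^{1+2\mu_k}$. Without this, your step~2 estimate for the quasilinear commutator cannot be completed at top order. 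Finally, a minor point: the paper does \emph{not} commute with the scaling $S$ (it is not in the admissible family $\mathcal{Z}$), and including it would generate an extra $-2\Box W$ term since $[S,\Box_{x,y}]=-2\Box_{x}$, which is harmless for $W_0$ but not for $\Wt$.
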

	\subsection{Vector Fields}
	
	In order to describe the global bounds and decay properties of the solution $W=(u,v)^T$ to \eqref{eq_W}-\eqref{data_W} we need to introduce the family of Killing vector fields associated to our problem. Those are the vector fields that exactly commute with $\Box_{x,y}$:
	\begin{align}
		&\partial_t, \partial_1, \partial_2, \partial_3, \partial_y \label{der}\\
		&\Omega_{ij} = x_j \partial_i - x_i \partial_j \qquad 1\le i<j\le 3\label{rotation}\\
		&\Omega_{0i} = t\partial_i + x_i \partial_t \qquad i=\overline{1,3.}\label{boost}
	\end{align}
	The expressions in \eqref{der} correspond to the translations in the coordinate directions; \eqref{rotation} correspond to the Euclidean rotations in the $x$ coordinates; \eqref{boost} are the hyperbolic rotations, also called boosts. 
	We also introduce the conformal scaling vector field
	\begin{equation}
		\scal  = t\partial_t + x\cdot \nabla_x
	\end{equation}
	which is not Killing for \eqref{eq_W} but will appear later in the analysis of the problem.
	We refer to \eqref{rotation} and \eqref{boost} as \emph{Klainerman vector fields} and generally denote them by $Z$:
	\[
	Z :=\{\Omega_{ij}, \Omega_{0i}\}.
	\]
	We denote the full set of admissible vector fields for $\Box_{x,y}$ as
	\begin{equation}\label{family_Z}
		\mathcal{Z}:=\{\partial_t, \partial_1, \partial_2, \partial_3, \partial_y, \Omega_{ij}, \Omega_{0i}\}
	\end{equation}
	and for any multi-index $\gamma=(\alpha, \beta)$ we denote
	\[
	\mathcal{Z}^\gamma = \partial^\alpha Z^\beta.
	\]
	For any two non-negative integers $k,n$ with $k\le n$ we say that the multi-index $\gamma = (\alpha, \beta)$ is of type $(n,k)$ if
	$|\gamma| = |\alpha|+|\beta| \le n$ and $|\beta|\le k$, in other words if there are at most $k$ Klainerman vector fields among the $|\gamma|$ admissible vector fields in the product $\mathcal{Z}^\gamma$.

	\subsection{The null structure}
	The nonlinearities we consider in this work are linear combination of the classical quadratic null forms \eqref{null_forms}. An important feature of the null forms is that they are combination of three types of products and can be expressed schematically as follows
	\begin{equation}\label{rewrite}
		\mathbf{N}(\phi, \psi) =\overline{\partial} \phi \cdot \partial\psi + \partial\phi \cdot \overline{\partial}\psi + \frac{t-r}{t} \partial \phi\cdot \partial\psi
	\end{equation}
	where $\op_j = t^{-1}\Omega_{0j}$ are the rescaled hyperbolic rotations, or also as
	\begin{equation}\label{rewrite1}
		\mathbf{N}(\phi, \psi) =\mathcal{T} \phi \cdot \partial\psi + \partial\phi \cdot \mathcal{T}\psi 
	\end{equation}
	where $\mathcal{T}_j=\partial_j + \frac{x_j}{r}\partial_t$ for $j=\overline{1,3}$ are the vector fields tangent to the cones $\{t-r=const\}$. The $\mathcal{T}$ vector fields are related to the boosts in general via the following relation
	\[
	\mathcal{T}_j = \frac{1}{t}\Omega_{0j} + \frac{(t-r)}{t}\frac{x_j}{r} \partial_t.
	\]
	We will often make use of the two representations \eqref{rewrite} and \eqref{rewrite1} to recover suitable pointwise and energy estimates for the solution as they allow to recover additional decay for the interactions $W_0\times W_0$.
	
	\subsection{The interior and exterior region}
	The proof of our main theorem is based on the combination of a classical local existence result with a bootstrap argument.
	We will perform such argument separately in the two regions in which we decompose our space-time:
	\[
	\text{interior region}\qquad	\dint := \left\{(t,x):t\ge 2 \text{ and } |x|< t-1 \right\} \times \m S^1
	\]
	\[
	\text{exterior region}\qquad		\dext := \left\{(t,x): t\ge 2 \text{ and } |x|\ge t-1 \right\} \times \m S^1.
	\]
	In order to describe our bootstrap assumptions we first introduce some notations.
	Given any hyperboloid $\q H_s$ in $\m R^{1+3}\times \m S^1$ we denote by $\hin_s$ (resp. $\hex_s$) the branch of $\q H_s$ contained in the interior region $\dint$ (resp. in the exterior region $\dext$):
	\[
	\q H_s =\left\{(t, x): s^2 = t^2-|x|^2\right\}\times \m S^1
	\]
	\[
	\begin{aligned}
		\hin_s &: =\left\{(t,x, y)\in \q H_s :  |x|<(s^2-1)/2\right\}, \\
		\hex_s & :=\left\{(t,x, y)\in \q H_s :  |x|\ge (s^2-1)/2\right\}.
	\end{aligned}
	\]
	Moreover we denote by $\hin_{[2,s]}$ the hyperbolic interior region above $\hin_2$ and below $\hin_s$, and by $\hex_{[2,s]}$ the portion of the exterior region below $\hex_s$, for any $s\ge 2$ (see figure \ref{fig:hin})
	\[
	\hin_{[2,s]} :=\left\{(t,x, y) \in \dint: 2\le t^2-|x|^2\le s^2\right\}
	\]
	\[
	\hex_{[2, s]} :=\left\{(t,x, y)\in \dext: t^2 - |x|^2\le s^2\right\}.
	\]
	
	In the interior region the bootstrap assumptions will be energy bounds on the truncated hyperboloids $\hin_s$ for $s\ge 2$ and pointwise bounds on the $Z$ derivative of the zero mode of the solution. The local wellposedness theory for this problem assures the existence and smallness of the solution $W=(u,v)^T$ to \eqref{eq_W}-\eqref{data_W} up to the interior hyperboloid $\hin_2$, hence our goal will be to propagate the bootstrap assumptions in the hyperbolic interior region above $\hin_2$
	\[
	\hin_{[2,\infty)} :=\left\{(t,x, y) \in \dint: 2\le t^2-|x|^2\right\}.
	\]
	
	In the exterior region	the bootstrap assumptions will instead be weighted energy bounds on the constant time slices $\Sext_t$ which foliate $\dext$ for $t\ge 2$,
	\begin{align*}
		\Sext_t &:= \{x\in \m R^3 : |x|\ge t-1\} \times \m S^1.
	\end{align*} 
	
	\begin{figure}[h]
\begin{center}
\begin{tikzpicture}[scale=1.9]

\draw[->] (0,-0.3) -- (0,3);
\draw[->] (-0.3,0) -- (3.1,0);
\node[below] at (3,0) {\small $r$};
\node[left] at (0,2.9) {\small $t$};

\draw[red,thick, name path=A] [domain=0:2] plot(\x, {((1.44)^2+(0.8)*(\x)^2)^(0.5)});
\node[red, above, left] at (1.8, 2.2) {\tiny $\hin_s$};

\draw[dashed] [domain=2:2.7] plot(\x, {((1.44)^2+(0.8)*(\x)^2)^(0.5)});

\draw[red, thick] [domain = 0.45: 2] plot(\x, {\x +0.3});

\draw[red,thick, name path=B] [domain=0:0.45] plot(\x, {((0.6)^2+(\x)^2)^(0.5)});

\node[red, right] at (0.1,1.1) {\tiny $\hin_{[2,s]}$};

\draw[dashed] [domain=0:2.5] plot(\x,\x+0.3);
\node[above, left] at (2.5,2.5+0.3) {\tiny $t=r+1$};

\draw[dashed] [domain=0:2.8] plot(\x,\x);
\node[above, right] at (2.8,2.8) {\tiny $t=r$};

\tikzfillbetween[of=A and B]{red!40,nearly transparent}

\end{tikzpicture}
\caption{Vertical section of the region $\hin_{[2, s]}$ projected onto $\m R^{1+3}$}
\label{fig:hin}
\end{center}
\end{figure}
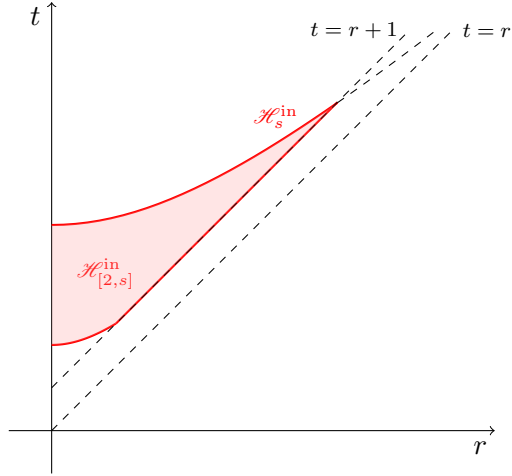
	
	We warn the reader that throughout the paper we will work with functions defined on the product space $\m R^{1+3}\times \m S^1$ as well as with functions not depending on the $y$ variable and defined on the flat space $\m R^{1+3}$. With the purpose of keeping notations as light as possible, a region in $\m R^{1+3}\times \m S^1$ and its projection onto $\m R^{1+3}$ will have the same name.
	
	\subsection{The energy functionals} \label{Energy_functionals}
	In the interior region we aim to propagate a-priori energy bounds on the solution on the truncated hyperboloids $\hin_s$. 
	The interior energy functional on $\hin_s$ associated to the linear counterpart of equation \eqref{eq_W} is
	\begin{equation}\label{enint}
		\begin{aligned}
			\enint(s, W) &: =  \iint_{\hin_s} \left(\frac{s}{t}\right)^2|\partial_tW|^2 + |\op W|^2 + |\partial_yW|^2  dxdy \\
			& = \iint_{\hin_s} \left(\frac{s}{t}\right)^2|\partial_xW|^2 + |\scal W|^2 + t^{-2}\sum_{1\le i<j\le 3}|\Omega_{ij}W|^2 + |\partial_yW|^2\, dxdy
		\end{aligned}
	\end{equation}
	where $|\op W|^2 $ stands for $\sum_{i=1}^3 |\op_i W|^2$.
	The above functional can be decomposed  as follows
	\[
	\enint(t, W) = \enint(t, W_0) + \enint(t, \Wt)
	\]
	where $\enint(t, \Wt)$ is defined by replacing $W$ with $\Wt$ in \eqref{enint} and
	\[
	\begin{aligned}
		\enint(s, W_0) &: =  \int_{\hin_s} \left(\frac{s}{t}\right)^2|\partial_tW_0|^2 + |\op W_0|^2  dx \\
		& = \int_{\hin_s} \left(\frac{s}{t}\right)^2|\partial_xW_0|^2 + |\scal W_0|^2 + t^{-2}\sum_{1\le i<j\le 3}|\Omega_{ij}W_0|^2 \, dx.
	\end{aligned}
	\]
	We also introduce and work with the conformal energy functional on truncated hyperboloids associated to the linear flat wave equation on $\m R^{1+3}$. This is the functional defined as follows
	\begin{equation}\label{conf_en}
		\ec(s, W_0):=\int_{\hin_s} \frac{1}{t^2}\left|K W_0 + 2t W_0\right|^2 + \frac{s^2}{t^2}\sum_{i=0}^3|\Omega_{0i}W_0|^2\, dx,
	\end{equation}
	where $K=(t^2+r^2)\partial_t + 2rt\partial_r$ is the Morawetz multiplier.
	
	\medskip
	In the exterior region we will describe the evolution of equation \eqref{eq_W} by means of the following weighted energy functional
	\begin{equation} \label{enext}
		\enext(t, W) = \iint_{\Sext_t} (2+r-t)^{\alpha+1} \left[ |\partial_t W|^2 + |\nabla_x W|^2 + |\partial_y W|^2\right]\, dxdy
	\end{equation}
	and of stronger norm $\Xext_T$
	\begin{equation} \label{Xext}
		\left\|W\right\|^2_{\Xext_T} :=\sup_{t\in [2,T]}\enext(t,W) +(1+\alpha) \int_2^T \iint_{\Sext_t} (2+r-t)^\alpha \left( |\mathcal{T}W|^2  + |\partial_y W|^2  \right) \, dxdydt
	\end{equation}
	where $|\mathcal{T}W|^2 $ stands for $\sum_{i=1}^3 |\mathcal{T}_iW|^2$.
	In the above energy functionals we have $T>2$ and $\alpha>0$.
	The bootstrap assumptions in this region will be energy bounds on the $\Xext_T$ norm of the solution, which not only controls the weighted energy of the solution but also the weighted $L^2$ space-time norm of the \textit{good derivatives}: the tangential derivatives $\q T$ to the cones $\{t-r=const\}$ and the derivative along the periodic direction $\partial_y$.
	
	As a result of the global energy bounds that will be proved to hold in the exterior region (see Section \ref{Sec:Exterior}) we also obtain a control on the energy of $W$ on the exterior hyperboloids
	\begin{equation}\label{ext_en_hyp}
		\begin{aligned}
			\enexh(s, W)& =   \iint_{\hex_s} \left(\frac{s}{t}\right)^2|\partial_tW|^2 + |\op W|^2 + |\partial_yW|^2  dxdy \\
			& = \iint_{\hex_s} \left(\frac{s}{t}\right)^2|\partial_xW|^2 + |\scal W|^2 + t^{-2}\sum_{1\le i<j\le 3}|\Omega_{ij}W|^2 + |\partial_yW|^2\, dxdy
		\end{aligned}
	\end{equation}
	as well as on the exterior conformal energy of $W_0$ on constant time slices $\Sext_t$
	\begin{equation}\label{conf_enext}
		\ecex(t, W_0):=\int_{\Sext_t} \left|\scal W_0 + 2W_0\right|^2 + \sum_{i=0}^3 |\Omega_{0i}W_0|^2\, dx.
	\end{equation}
	\subsection{The quasilinear energies}
	
	Equation \eqref{eq_W} is quasilinear and in order to propagate both the interior and exterior a-priori energy bounds for the solution we need to consider a cubic modification of the energies introduced in the previous subsection. Such \textit{quasilinear energies} are defined as follows
	\[
	\equasi(s, W):=\enint(s, W) + \iint_{\hin_s} u|\partial_yW|^2\, dxdy
	\]
	\[
	\exquasi(t, W):=\enext(t, W) + \iint_{\Sext_t}u|\partial_yW|^2\, dxdy
	\]
	\[
	\enexhq(s, W):=\enexh(s, W) + \iint_{\hex_s}u|\partial_yW|^2\, dxdy.
	\]
	We also introduce the quasilinear modification of the stronger norm $\Xext_T$ \small
	\[
	\|W\|^2_{\Xext_{\text{quasi}, T}} : =\sup_{t\in[2, T]}\exquasi(t, W) + (1+\alpha)\int_2^T \iint_{\Sext_t} (2+r-t)^\alpha \left( |\mathcal{T}W|^2  +(1+u) |\partial_y W|^2  \right) \, dxdydt.
	\]\normalsize
	We immediately observe that under smallness assumptions on $u$, e.g. $|u|\le 1/10$,
	our starting energies are equivalent to their corresponding quasilinear counterparts, i.e. for any $E = \{\enint, \enext, \enexh\}$ 
	\[
	\frac{9}{10}E(t,W) \le E_{\text{quasi}}(t, W)\le \frac{11}{10}E(t, W) .
	\]
	The same holds true for the stronger norms $\Xext_T$ and $\Xext_{\text{quasi}, T}$.
	
	\subsection{Higher order norms}
	We use the vector fields introduced before to define the higher order counterparts of the energy functionals and of the stronger exterior norm $\Xext_T$
	\[
	E_{n}(s, W) := \sum_{|\gamma|\le n}E(s, \mathcal{Z}^\gamma W), \qquad E = \{\enint, \enext, \enexh\}
	\]
	\[
	\|W\|_{X^{n, \alpha}_T} :=\sum_{|\gamma|\le n} \|\mathcal{Z}^\gamma W\|_{\Xext_T}.
	\]
	The higher order energies of $W_0$ and $\Wt$ are defined analogously. We observe that the above higher order energies control the high Sobolev regularity of the solution in the interior and exterior region respectively and also keep track of the $Z$ vector fields applied to the solution in addition to usual derivatives. In the interior region it will be important to keep track of the precise number of Klainerman vector fields acting on the $\Wt$ component of the solution and to that purpose we also introduce the following energy
	\[
	\enint_{n,k}(s, \Wt) := \sum_{\q I_{n,k}}E(s, \q Z^\gamma \Wt),
	\]
	where $\q I_{n,k}$ denotes the set of indices of type $(n,k)$.
	We finally introduce the higher order counterparts of the conformal energy functionals \eqref{conf_en} and \eqref{conf_enext} in order to control the conformal energies of pure products of Klainerman vector fields acting on $W_0$
	\[
	\ec_{n}(s, W_0):=\sum_{|\beta|\le n}\ec(s, Z^\beta W_0), \qquad \ecex_{n}(t, W_0):= \sum_{|\beta|\le n}\ecex(s, Z^\beta W_0).
	\]

	\subsection{Notations} Throughout the rest of the paper we will use the following notations:
	\begin{itemize}
		\item[\tiny{$\bullet$}] $r=|x|$
		\item[\tiny{$\bullet$}] $\nabla = (\partial_1, \partial_2, \partial_3)$ denotes spatial derivatives
		\item[\tiny{$\bullet$}]$\partial = \{\partial_t, \partial_j, \partial_y\}$ denotes space-time derivatives
		\item[\tiny{$\bullet$}] $\partial_{tx}=\{\partial_t, \partial_j\}$ denotes flat space-time derivatives
		\item[\tiny{$\bullet$}] $\op_j = t^{-1}\Omega_{0j}$ and $\q T_j = \partial_j + \frac{x_j}{r}\partial_t$ for $j=\overline{1,3}$
		\item[\tiny{$\bullet$}] $\q Z^{\le n} = \sum_{|\gamma|\le n}\q Z^\gamma$ 
	\end{itemize}

	\subsection*{Acknowledgments} This material is based upon work supported by the National Science Foundation under Grant No. DMS-1929284 while the second author was in residence at the Institute for Computational and Experimental Research in Mathematics in Providence, RI, during the \textit{Hamiltonian Methods in Dispersive and Wave Evolution Equations} program. The second author was also supported by an AMS Simons travel grant. The first author was supported by the ANR-19-CE40-0004.
	
	\section{Overview of the proof} 
	
	The proof of our main theorem is based on the combination of a classical local well-posedness result for equation \eqref{eq_W} with a bootstrap argument. We will perform this argument separately in the interior region $\dint$ and the exterior region $\dext$ in which we divide the space-time $\m R^{1+3}\times \m S^1$.
	
	\smallskip
	The bootstrap assumptions in the exterior region $\dext$ are uniform-in-time energy bounds on the higher order stronger norm $X^{5, \alpha}_{T_0}$ of the solution $W$, for any fixed $\alpha>0$:
	\begin{equation}\label{boot_energy}
		\|W\|^2_{X^{5, \alpha}_{T_0}}\le 2C^2_0\ep^2.
	\end{equation}
	Such bound in particular implies the existence of some function $l\in L^1([2, T_0])$ such that
	\begin{equation}\label{l(t)}
		\iint_{\Sext_t} (2+r-t)^\alpha \left( |\mathcal{T} \q Z^{\le 5} W|^2  + |\partial_y \q Z^{\le 5} W|^2  \right) \, dxdy \le 2C^2_0\ep^2 l(t).
	\end{equation}
	
	The result we want to prove in the exterior region is the following
	\begin{proposition}\label{prop:boot_ext}
		There exists a constant $C_0>0$ sufficiently large and a constant $\epsilon_0>0$ sufficiently small such that, for every $0<\epsilon<\epsilon_0$ if $W=(u,v)^T$ is a solution to \eqref{eq_W}-\eqref{data_W} in an interval $[2,T_0]$ and satisfies the energy bounds \eqref{boot_energy} then actually
		\[
		\left\| W\right\|^2_{X^{5, \alpha}_{T_0}}\le C_0^2\ep^2.
		\]
	\end{proposition}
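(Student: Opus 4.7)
\textbf{Proof plan for Proposition \ref{prop:boot_ext}.}

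The plan is a standard weighted energy bootstrap. For every multi-index $\gamma$ with $|\gamma|\le 5$ I first commute $\q Z^\gamma$ with \eqref{eq_W}. Since every vector field in $\q Z$ commutes with $\Box_{x,y}$, and since all of them except $\partial_y$ also commute with $\partial_y$, the commuted equation reads
\[
\Box_{x,y}(\q Z^\gamma W) + u\,\partial_y^2 (\q Z^\gamma W) = \q Z^\gamma \mathbf{N}(W,W) - \sum_{\substack{\gamma_1+\gamma_2=\gamma\\ |\gamma_1|\ge 1}} c_{\gamma_1,\gamma_2}\,(\q Z^{\gamma_1} u)\,\partial_y^2(\q Z^{\gamma_2} W).
\]
Then, setting $\sigma = 2+r-t$, I multiply by $\sigma^{\alpha+1}\partial_t(\q Z^\gamma W)$ and integrate over $\dext \cap\{2\le t\le T\}$. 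The Killing multiplier identity for $\Box_{x,y}$, combined with $\partial_t\sigma=-1$, $\nabla_x\sigma=x/r$, produces the boundary energy $\enext(t,\q Z^\gamma W)$, a non-negative flux on the cone $\{r=t-1\}$, and the coercive bulk term $(\alpha+1)\sigma^\alpha(|\q T \q Z^\gamma W|^2 + |\partial_y \q Z^\gamma W|^2)$ on the right-hand side with the correct sign (the bad derivative cancels in the combination $|\nabla W|^2+|\partial_t W|^2+2\partial_t W\partial_r W = (LW)^2 + |\slashed\nabla W|^2$). Absorbing the quasilinear term on the left by moving to the quasilinear energy $\exquasi$ (justified, and equivalent to $\enext$, by $\|u\|_{L^\infty}\lesssim \ep$ coming from the bootstrap via Klainerman-Sobolev in the exterior region), I obtain a differential inequality for $\|W\|^2_{X^{5,\alpha}_T}$.

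The next step is to bound the right-hand side contributions by $C\ep^3$. For the null-form contribution I use the representation \eqref{rewrite1}, $\mathbf{N}(\phi,\psi)\sim \q T\phi\,\partial\psi + \partial\phi\,\q T\psi$, and expand $\q Z^\gamma \mathbf{N}(W,W)$ as a sum of $\mathbf{N}(\q Z^{\gamma_1}W,\q Z^{\gamma_2}W)$ modulo lower-order terms. When both factors carry at most two vector fields, I place them in weighted $L^\infty$ by the weighted Klainerman-Sobolev inequality tailored to the exterior region, whose $(2+r-t)^\alpha$ weight translates into pointwise decay. When one factor carries the top number of vector fields, the tangential derivative $\q T$ on the other factor is paired against the spacetime integral $\int_2^T\iint \sigma^\alpha|\q T \q Z^{\le 5}W|^2$ controlled by the $X^{5,\alpha}_T$ norm itself, while the $\partial$ factor is placed in the weighted $L^\infty$ using a weighted Hardy/Sobolev inequality in the exterior region. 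A completely analogous split handles the quasilinear commutator terms $(\q Z^{\gamma_1}u)\partial_y^2(\q Z^{\gamma_2}W)$, exploiting here the extra $\partial_y$ derivative, which always falls on the $\q Z^{\gamma_2}W$ factor and whose squared $L^2$ in spacetime is exactly the other good derivative term in the $X^{5,\alpha}_T$ norm.

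Combining everything and using the data assumption $\enext(2,\q Z^{\le 5}W)\le C_1\ep^2$, I reach
\[
\|W\|^2_{X^{5,\alpha}_T} \;\le\; C_1\ep^2 + C_2\,\ep\,\|W\|^2_{X^{5,\alpha}_T},
\]
from which the bootstrap closes with $C_0^2:= 2C_1$ for $\ep$ sufficiently small (so that $2C_2 C_0^2\ep\le C_1$).

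\textbf{Main obstacle.} The delicate point is the top-order estimate, where one of the two factors in the null form carries all five vector fields. In this regime I cannot afford any $\partial$-derivative loss, and must route the top-order factor through the spacetime bulk term $\int\sigma^\alpha|\q T\q Z^{\le 5}W|^2$ produced on the left by the weight $\sigma^{\alpha+1}$. Making this routing work requires the full null structure \eqref{rewrite1}: without it one would encounter a generic $\partial\phi\,\partial\psi$ product whose top-order contribution cannot be absorbed by the weighted energy because the bad derivative $\partial_t-\partial_r$ is not controlled pointwise by $\sigma$ near the cone $r=t-1$ where $\sigma\equiv 1$. Additionally, in the quasilinear term $u\partial_y^2 W$, the slow decay of $u=W_0$ along the cone means that one must verify carefully that $\|(2+r-t)^{1/2}u\|_{L^\infty}$ (or a similar weighted $L^\infty$ quantity sufficient to dominate the commutator) is already $O(\ep)$ under the bootstrap; this requires a weighted Sobolev embedding on the exterior slices $\Sext_t$ using the full $X^{5,\alpha}_T$ norm.
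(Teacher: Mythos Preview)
Your plan is correct and follows the same route as the paper: apply the weighted energy inequality (Proposition \ref{prop: en_ineq_ext}) to the commuted equation, use the null form representation \eqref{rewrite1} so that one factor always carries a good derivative $\q T$ or $\partial_y$ whose spacetime $L^2$ is controlled by the $\Xext$ norm, and close with weighted Sobolev/Hardy on $\Sext_t$. The one point you underemphasize is the quasilinear commutator when $\q Z^{\gamma_1}$ is a pure product of Klainerman vector fields and $|\gamma_1|$ is large: there is no $\partial$ to pull out of $Z^{\beta_1}u$, and the paper handles this by a weighted Hardy inequality (Corollary \ref{corhar}) trading $Z^{\beta_1}u$ for $\partial Z^{\le \beta_1}u$ at the cost of one power of $(2+r-t)$, together with an $L^4\times L^4$ Sobolev splitting on $\m S^2\times\m S^1$ for the intermediate-index cases.
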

	In the above proposition the time $T_0$ is arbitrary, therefore the solution $W$ exists globally in $\dext$ and satisfies the energy bound \eqref{boot_energy} for all times $T_0>2$. In particular, one has that
	\begin{equation}\label{global_strongnorm}
		\|\q Z^{\le 5}W \|_{\Xext_\infty} := \lim_{T_0\rightarrow\infty}\|\q Z^{\le 5}W\|_{\Xext_{T_0}} \le 2C_0^2\ep^2.
	\end{equation}

	\medskip
	The bootstrap assumptions in the interior region are higher order energy bounds on hyperboloids for $W_0$ and $\Wt$ and pointwise bounds on the $Z$ derivative of $W_0$
	\begin{equation}\label{booten1}
		\enint_{5}(s, W_0) \le 2A^2\ep^2
	\end{equation}
	\begin{equation}\label{booten2}
		\enint_{5,k}(s, \Wt)\le 2A^2\ep^2 s^{2\delta_k}, \qquad k=\overline{0,5}
	\end{equation}
	\begin{equation} \label{bootZ}
		|ZW_0(t,x)|\le 2B \ep t^{-1}s^\sigma
	\end{equation}
	for all $s\in [2, s_0]$, $(t,x)\in \hin_{[2, s_0]}$, and $s_0>2$ fixed. Here $\delta_0=0$, the parameters $\sigma, \delta_k$s are fixed small universal constants satisfying $0<\sigma\ll \delta_{k}\ll \delta_{k+1}$ for $k=\overline{1,4}$, and $A$ and $B$ are large universal constants which we will improve as a part of the conclusion of the proof.
	The result we want to prove in this region requires the global exterior energy bounds \eqref{global_strongnorm} and can be stated as follows
	\begin{proposition}
		There exist two constants $A, B>0$ sufficiently large, $0<\ep_0, \sigma, \delta_k\ll 1$ sufficiently small with $\delta_0=0$ and $\sigma\ll \delta_{k}\ll \delta_{k+1}$ for $k=\overline{1,4}$ such that, for every $0<\ep<\ep_0$ if $W=(u,v)^T$ is a solution to \eqref{eq_W}-\eqref{data_W} in the region $\hin_{[2, s_0]}\cup \dext$ and satisfies the global exterior energy bounds \eqref{global_strongnorm} as well as the interior bounds \eqref{booten1}- \eqref{bootZ} for all $s\in [2, s_0]$, then it actually satisfies the enhanced interior bounds
		\[
		\enint_{5}(s, W_0) \le A^2\ep^2
		\]
		\[
		\enint_{5,k}(s, \Wt)\le A^2\ep^2 s^{2\delta_k}, \qquad k=\overline{0,5}
		\]
		\[
		|ZW_0(t,x)|\le B \ep t^{-1}s^\sigma
		\]
		for all $s\in [2, s_0]$ and all $(t,x)\in \hin_{[2, s_0]}$.
	\end{proposition}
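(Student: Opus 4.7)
The plan is a continuity argument in the hyperboloidal parameter $s$ on $\hin_{[2, s_0]}$, simultaneously improving the three bootstrap constants $A$ and $B$, with the parameters $\sigma,\delta_k$ fixed a priori. I would commute the decomposed system \eqref{syst_freq} with each $\q Z^\gamma$, $|\gamma|\le 5$, obtaining a flat wave equation for $\q Z^\gamma W_0$ and a massive perturbed equation for $\q Z^\gamma \Wt$, the right-hand sides of both carrying the $\q Z^\gamma$-commuted nonlinearity plus the quasilinear commutator $[\q Z^\gamma, u\partial_y^2]\Wt$. Multiplying each by $\partial_t(\q Z^\gamma W)$ and integrating over $\hin_{[2,s]}$ yields the quasilinear energy identity
\[
\equasi(s, \q Z^\gamma W) - \equasi(2, \q Z^\gamma W) = \text{(lateral flux on } \{r=t-1\}\text{)} + \text{(bulk integral on } \hin_{[2,s]}\text{)}.
\]
The initial flux $\equasi(2, \q Z^\gamma W)$ and the lateral cone flux are both absorbed by the global exterior bound \eqref{global_strongnorm}: the initial flux by restriction of $\|\q Z^{\le 5}W\|_{\Xext_\infty}$ to the $t=2$ slice (where the weight $(2+r-t)^{\alpha+1}$ is bounded below), and the lateral flux by Fubini along the cone, where the same weight is a positive constant and the integrand is dominated by $\epsilon^2 l(t)$ from \eqref{l(t)}, which is $L^1$ in $t$.

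The bulk integrals are split according to the type of quadratic interaction. For $W_0\times W_0$ self-interactions the null structure \eqref{rewrite}-\eqref{rewrite1} is essential: one flat derivative is replaced either by a good derivative $\q T$, controllable in $L^2$ as $(s/t)|\partial|+t^{-1}|Z|$, or by the factor $(t-r)/t$; combined with the pointwise bound \eqref{bootZ} this yields an integrand of order $\epsilon t^{-1}s^\sigma \cdot (s/t)|\partial \q Z^\gamma W_0|$ whose $s$-integral closes for $\sigma$ sufficiently small. For mixed $W_0\times \Wt$ and $\Wt\times \Wt$ interactions, the Klein-Gordon pointwise decay $|\partial^{\le 2}\Wt|\lesssim \epsilon t^{-3/2}$, obtained from $\enint_{5,0}(s,\Wt)$ via Klainerman-Sobolev on hyperboloids, produces integrable bulk contributions on one of the two factors. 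The quasilinear commutator $[\q Z^\gamma, u\partial_y^2]\Wt$ is distributed between its factors in $L^2\times L^\infty$ or $L^\infty\times L^2$ pairs; each time a Klainerman vector field falls on $u$ the remaining $k-1$ fields fall on $\Wt$, picking up at worst $s^{\delta_{k-1}+\delta_1}$, which fits inside $s^{\delta_k}$ precisely because of the chosen hierarchy $\sigma\ll \delta_k \ll \delta_{k+1}$.

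The improved pointwise bound on $ZW_0$ is obtained independently via Klainerman-Sobolev on hyperboloids applied to $Z^\beta W_0$ with $|\beta|\le 2$, whose $L^2(\hin_s)$ norm is controlled by the conformal energy $\ec_{n}(s, W_0)$ from \eqref{conf_en}. The growth of $\ec_n$ follows from pairing the Morawetz multiplier $KW_0+2tW_0$ against the wave source in the first equation of \eqref{syst_freq}: the null structure again provides the $(t-r)/t$ factors needed to absorb the conformal weight, while the $y$-integral $\int \partial_y\ut\,\partial_y\Wt\, dy$ benefits from Parseval-type orthogonality of the non-zero modes. The main obstacle, I expect, is the coupled bookkeeping between the graded loss $s^{2\delta_k}$ and the quasilinear commutator: since $\Omega_{0i}$ does not commute with multiplication by $u$, one must carefully track how many Klainerman fields fall on $u$ versus on $\Wt$ in each term of $[\q Z^\gamma, u\partial_y^2]\Wt$ and verify that the accumulated loss remains strictly below $s^{2\delta_k}$ at every order, a tight but closable constraint provided the $\delta_k$ are chosen with sufficient separation.
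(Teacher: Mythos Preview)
Your energy-estimate skeleton for $\enint_5(s,W_0)$ and $\enint_{5,k}(s,\Wt)$ is broadly on track, but the plan for the pointwise bound $|ZW_0|\le B\ep t^{-1}s^\sigma$ has a genuine gap that would prevent the bootstrap from closing.

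You propose to recover $|ZW_0|$ from Klainerman--Sobolev on hyperboloids controlled by the conformal energy $\ec_2(s,W_0)$. But the conformal energy only satisfies $\ec_k(s,W_0)\lesssim \ep^2 s^{1+2\mu_k}$ (this is the best one can get from the source estimates, since the weight in the conformal inequality is $s$ rather than $1$). Klainerman--Sobolev then yields at best $|ZW_0|\lesssim \ep\, t^{-1/2}s^{-1/2+\delta_2}$, which near the cone boundary (where $t\sim s^2$) is of order $\ep\, t^{-1}s^{1/2+\delta_2}$ --- far worse than the target $\ep\, t^{-1}s^\sigma$ with $\sigma\ll\delta_2$. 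The paper closes this by an $L^\infty$--$L^\infty$ estimate (Alinhac's lemma): one splits $W_0=W_0^1+W_0^2$ with $W_0^1$ carrying the source inside the cone and zero data, and $W_0^2$ carrying the data and the exterior source. For $ZW_0^1$ one feeds \emph{pointwise} bounds on the source into the fundamental solution to get $|ZW_0^1|\le C(A,B)\ep^2 t^{-1}(t-r)^\sigma$, whose $\ep^2$ prefactor allows the improvement. For $ZW_0^2$ the conformal-energy route does work, but only because its conformal energy is controlled by the \emph{exterior} data with constant $C_0$, not $A$.

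This step in turn requires a sharp $t^{-3/2}$ bound on $\partial^{\le 1}\Wt$ and $\partial^2_y\Wt$ \emph{without} the $s^{\delta_k}$ loss. You claim this comes ``from $\enint_{5,0}(s,\Wt)$ via Klainerman--Sobolev'', but KS on hyperboloids costs two $Z$-derivatives, so it actually uses $\enint_{\cdot,2}$ and produces an unavoidable $s^{\delta_2}$ loss (cf.\ the $s^{\delta_{k+2}}$ in the paper's KS lemma). The paper removes this loss by a separate ODE-type argument along the scaling direction $\scal$ (a Klein--Gordon $L^\infty$ estimate in the spirit of Klainerman and LeFloch--Ma), which you have not accounted for. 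Finally, in the $\Wt$ energy estimate the dangerous commutator term is $Z^{\beta_1}u_0\cdot\partial_y^2\Wt^{\gamma_2}$ with $\beta_1$ purely Klainerman: $Z^{\beta_1}u_0$ is \emph{not} in the energy and must be placed in $L^2$ via the conformal energy, yielding $\ep^2\tau^{-1+\mu_{k-1}}$ rather than your $s^{\delta_{k-1}+\delta_1}$; this is exactly the borderline term that forces the $s^{2\delta_k}$ growth and drives the choice of hierarchy.
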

	In the above proposition the hyperbolic time $s_0$ is arbitrary which implies the global existence of the solution in the interior region $\dint$.
	The reason why we distinguish between the energies of $W_0$ and of $\Wt$ and do not propagate uniform-in-time energy bounds for the latter is related to some slow decaying semilinear terms that only appear in the equations satisfied by the differentiated function $\q Z^\gamma \Wt$. One of such terms is for instance $Z^\beta u_0\cdot\partial^2_y \Wt$, which appears when $\gamma$ is a multi-index of type $(k,k)$, i.e. when $\q Z^\gamma = Z^\beta$ is a pure product of Klainerman vector fields. The $L^2_{xy}$ norm of such product can only be controlled in terms of the (square root of the) conformal energy $\ec_k(s, W_0)$, but even assuming the following sharp bounds
	\[
	\ec_k(s, W_0)\lesssim \ep^2 s \qquad \text{and}\qquad \sup_{\hin_s}|\partial^2_y\Wt|\lesssim \ep^2 s^{-\frac{3}{2}}
	\]
	we are not able to recover a better $L^2$ bound than the following one, which is at the limit of integrability and prevents us from obtaining uniform in time bounds for the higher order energies of $\Wt$
	\[
	\left\| Z^\beta u_0\cdot\partial^2_y\Wt \right\|_{L^2_{xy}(\hin_s)}\le \ec_{k}(s, W_0)^\frac{1}{2}\|\partial^2_y\Wt\|_{L^\infty(\hin_s)}\lesssim \ep^2 s^{-1}.
	\]
	These problematic terms do not appear in the equation for $\q Z^\gamma W_0$, for which we can instead easily prove uniform in time energy bounds using the null structure.
	
	\medskip
	In both the interior and exterior region the booststrap argument is performed in two classical steps. The first step consists in recovering pointwise bounds for the solution. A first set of estimates is obtained from the a-priori energy bounds using Klainerman-Sobolev inequalities on hyperboloids in the interior region and weighted Sobolev embeddings on constant time slices in the exterior region. The pointwise bounds obtained for $\Wt$ in the interior region with this approach are not optimal due to the slow growth in time assumed in \eqref{booten2}, therefore more suitable bounds need to be recovered by directly analyzing the equation satisfied by $\Wt$. 
	Since Klainerman-Sobolev inequalities only yield pointwise bounds for derivatives of $W_0$, we will also need to recover $L^\infty-L^\infty$ estimates for $W_0$ using the equation it satisfies.
	
	The second step of the bootstrap argument consists in writing a higher order (interior and exterior respectively) energy inequality for the solution and in using the pointwise bounds previously obtained to perturbatively estimate the cubic terms appearing in the right hand side of such inequality. As appears from the example briefly mentioned above, we also need to recover higher order conformal energy bounds for $W_0$ from the a-priori energy bounds. This is especially important in order to propagate the interior energy bounds \eqref{booten1} and \eqref{booten2}.
	
	\smallskip
	The paper is structured as follows. In Section \ref{Sec:Lin} we derive the energy inequalities for the linearized equation, both in the exterior and the interior region. We also derive the conformal energy inequalities. In Section \ref{Sec:Exterior} we prove the global existence of the solution in the exterior region. In Section \ref{PointwiseEst} we recover the pointwise estimates for $\Wt$ and $W_0$. Finally in Section \ref{Sec: VF} we improve the energy estimates in the interior region, concluding the proof of Theorem \ref{main}.

	\section{The Linearized Equation}\label{Sec:Lin}
	
	The purpose of this section is to write the energy inequality and the conformal energy inequality for the linearized equation associated to the equation \eqref{eq_W} in both the interior and exterior regions $\dint$ and $\dext$. We will look at the following linear inhomogeneous equation
	\begin{equation}\label{linearized}
		(-\partial^2_t + \Delta_x) \Wf + (1+u) \partial^2_y \Wf = \Ff, \qquad (t,x,y)\in \m R\times \m R^3\times \m S^1,
	\end{equation}
	where $u$ is assumed to be a sufficiently small function, e.g. $|u|\le 1/10$.
	
	We start by proving a weighted energy inequality on the exterior constant time slices $\Sext_t$ for general linear inhomogeneous equations of the form \eqref{linearized}. In the following proposition the lifespan $T_0$ is arbitrary, $\dext_{T_0}$ denotes the portion of exterior region in the time strip $[2, T_0]$ and $\q C_{[2, T_0]}$ is its null boundary
	\[
	\dext_{T_0}=\left\{(t,x,y)\in \dext: 2\le t\le T_0\right\},
	\] 
	\[
	\q C_{[2, T_0]} = \left\{(t, x): 2\le t\le T_0, \, r=t-1 \right\}\times \m S^1.
	\]
	
	\begin{proposition}\label{prop: en_ineq_ext}
		Let $\Wf$ be a solution to the equation \eqref{linearized}, $l\in L^1([2, T_0])$ and suppose that $u$ is a function satisfying the following pointwise bounds in the exterior region $\dext_{T_0}$
		\begin{align}
			\label{estu1}	 \|u\|_{L^\infty(\dext_{T_0})}& \le 1/10 \\
			\label{estu2}		\left\|(2+r-t)^\frac{1}{2}\partial_y u(t, x, \cdot)\right\|_{L^\infty(\m S^1)}& \lesssim \ep \sqrt{l(t)} \\
			\label{estut}
			\left\|(2+r-t)\partial u(t, x, \cdot)\right\|_{L^\infty(\m S^1)}& \lesssim \ep.
		\end{align}
		For any fixed $\alpha>0$ the following inequality holds true
		\begin{multline}\label{ineq_ex_lin}
			\|\Wf\|_{\Xext_{T_0}}^2 + \iint_{\q C_{[2, T_0]}} (2+r-t)^{\alpha+1}\left( |\mathcal{T}\Wf|^2 +|\partial_y\Wf|^2\right)\, d\sigma dydt \\
			\lesssim \enext(2, \Wf)  
			+ \left\|(2+r-t)^{\frac{\alpha+1}{2}}\Ff\right\|_{L^1_tL^2_{xy}(\dext_{T_0})} \|\Wf\|_{\Xext_{T_0}}.
		\end{multline}
		where $d\sigma$ is the area element of the sphere $\m S^2$.
	\end{proposition}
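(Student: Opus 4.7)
The plan is to derive the inequality by a weighted multiplier identity using $(2+r-t)^{\alpha+1}\partial_t\Wf$ together with the divergence theorem on $\dext_{T_0}$, whose boundary consists of $\Sext_2$, $\Sext_{T_0}$, and the null wall $\q C_{[2,T_0]}$ (with outward unit normal $\tfrac{1}{\sqrt 2}(\partial_t-\hat r\cdot\nabla_x)$). To handle the quasilinear term $u\partial_y^2\Wf$ cleanly I would absorb $u$ into the energy density and work with the current $\tilde P^0 = \tfrac12(2+r-t)^{\alpha+1}(|\partial_t\Wf|^2+|\nabla_x\Wf|^2+(1+u)|\partial_y\Wf|^2)$, $\tilde P^j=-(2+r-t)^{\alpha+1}\partial_t\Wf\,\partial_j\Wf$, $\tilde P^y=-(2+r-t)^{\alpha+1}(1+u)\partial_t\Wf\,\partial_y\Wf$, so that $2\iint_{\Sext_t}\tilde P^0\,dxdy = \exquasi(t,\Wf)$. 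Using \eqref{linearized} to substitute $(-\partial_t^2+\Delta_x+(1+u)\partial_y^2)\Wf=\Ff$, together with the tangential identity $|\partial_t\Wf|^2+|\nabla_x\Wf|^2+2\tfrac{x_j}{r}\partial_t\Wf\partial_j\Wf=|\mathcal{T}\Wf|^2$ (coming from the decomposition $\nabla_x=\hat r\,\partial_r+r^{-1}\nabla_\omega$), a direct computation gives
\[
\mathrm{div}\,\tilde P = -\tfrac{\alpha+1}{2}(2+r-t)^\alpha\bigl(|\mathcal{T}\Wf|^2+(1+u)|\partial_y\Wf|^2\bigr) - (2+r-t)^{\alpha+1}\partial_t\Wf\,\Ff + \tfrac12(2+r-t)^{\alpha+1}\partial_t u\,|\partial_y\Wf|^2 - (2+r-t)^{\alpha+1}\partial_y u\,\partial_t\Wf\,\partial_y\Wf.
\]

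The divergence theorem then yields a spacetime identity whose LHS collects three nonnegative quantities -- the quasilinear energy $\exquasi(T,\Wf)$ on $\Sext_T$, the cone flux on $\q C_{[2,T_0]}$ proportional to $(2+r-t)^{\alpha+1}(|\mathcal{T}\Wf|^2+(1+u)|\partial_y\Wf|^2)$ (which is nonnegative since \eqref{estu1} gives $1+u\ge 9/10$), and the bulk integral $(\alpha+1)\iiint(2+r-t)^\alpha(|\mathcal{T}\Wf|^2+(1+u)|\partial_y\Wf|^2)\,dxdydt$ -- and whose RHS is $\exquasi(2,\Wf)$ plus three error integrals. The source term is controlled by Cauchy-Schwarz as $|\iiint(2+r-t)^{\alpha+1}\partial_t\Wf\,\Ff|\le\|(2+r-t)^{(\alpha+1)/2}\Ff\|_{L^1_tL^2_{xy}(\dext_{T_0})}\sup_t\enext(t,\Wf)^{1/2}$, which is precisely the right-hand side of \eqref{ineq_ex_lin}. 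The $\partial_t u$ error, using \eqref{estut}, is dominated by $\epsilon\iiint(2+r-t)^\alpha|\partial_y\Wf|^2\,dxdydt\lesssim\epsilon\|\Wf\|^2_{\Xext_{T_0}}$ and will be absorbed into the bulk good-derivative term.

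The delicate step will be the $\partial_y u$ error. By \eqref{estu2}, $|\partial_y u|\le\epsilon\sqrt{l(t)}(2+r-t)^{-1/2}$; the key observation is that this loses exactly one half-power of the weight, forcing an asymmetric split
\[
(2+r-t)^{\alpha+1}|\partial_y u|\,|\partial_t\Wf|\,|\partial_y\Wf| \le \epsilon\sqrt{l(t)}\,(2+r-t)^{(\alpha+1)/2}|\partial_t\Wf|\cdot(2+r-t)^{\alpha/2}|\partial_y\Wf|.
\]
Cauchy-Schwarz in $(x,y)$ followed by Cauchy-Schwarz in $t$, combined with $l\in L^1([2,T_0])$, then bound this contribution by $\epsilon\,\|l\|_{L^1}^{1/2}\sup_t\enext(t,\Wf)^{1/2}\,\|\Wf\|_{\Xext_{T_0}}\lesssim\epsilon\|\Wf\|^2_{\Xext_{T_0}}$, so it too can be absorbed. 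Taking the supremum over $T\in[2,T_0]$, using the equivalence $\exquasi\sim\enext$ guaranteed by \eqref{estu1} (at $t=2$ and $t=T$), and choosing $\epsilon$ small enough produces the claimed inequality \eqref{ineq_ex_lin}. The main obstacle is this $\partial_y u$ error: the natural pointwise bound on $\partial_y u$ carries only the critical weight $(2+r-t)^{-1/2}$, and the argument goes through precisely because the $\Xext_{T_0}$ norm bundles together both the weighted energy of $\partial_t\Wf$ and the complementary $L^2$-in-time bulk control of $\partial_y\Wf$ at weight $(2+r-t)^{\alpha/2}$.
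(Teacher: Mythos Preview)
Your proposal is correct and follows essentially the same approach as the paper: both use the weighted multiplier $(2+r-t)^{\alpha+1}\partial_t\Wf$, derive the same divergence identity (the paper records it as \eqref{weighted_div}), integrate over $\dext_{T_0}$, and handle the three error terms exactly as you describe---in particular the asymmetric split $(2+r-t)^{(\alpha+1)/2}|\partial_t\Wf|\cdot(2+r-t)^{\alpha/2}|\partial_y\Wf|$ for the $\partial_y u$ term followed by Cauchy--Schwarz in $t$ using $\sqrt{l}\in L^2$. The only cosmetic difference is that you phrase the computation via an explicit energy-momentum current $\tilde P$ and take a supremum over $T$, whereas the paper writes the identity directly and integrates once over $\dext_{T_0}$.
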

	We remark that the result of proposition \ref{prop: en_ineq_ext} can be proved for any positive and increasing weight $\omega=\omega(r-t)$ only depending on the distance from the cone $\{t=r\}$ if the hypothesis on the function $u$ are changed appropriately.
	\proof
	From the smallness assumption on $u$ it will be enough to prove the inequality in the statement with $\enext(t, \Wf)$ and $\|\Wf\|_{\Xext_{T_0}}$ replaced by $\exquasi(t, \Wf)$ and $ \|\Wf\|_{\Xext_{\text{quasi}, T_0}}$ respectively.
	A simple computation shows that for any positive weight $\omega = \omega(r-t)$ one has
	\begin{equation}\label{weighted_div}
		\begin{aligned}
			& \omega(r-t) \partial_t \Wf \left[\Box_{x,y} \Wf + u \, \partial^2_y \Wf \right] 
			=-\frac{1}{2}\rd_t \left[\omega(r-t)\left((\partial_t\Wf)^2+|\nabla_x \Wf|^2+(1+u)(\partial_y \Wf)^2\right)\right]\\ 
			& +\text{div}_x\left(\omega(r-t)\partial_t \Wf \,\nabla_x \Wf\right )+\partial_y\left(\omega(r-t)(1+u)\partial_t \Wf \partial_y \Wf\right)\\
			&-\frac{1}{2}\omega'(r-t)\left( |\mathcal{T}\Wf|^2+(1+u)(\partial_y \Wf)^2  \right)-\omega(r-t)\left(\partial_y u \, \partial_t \Wf \partial_y \Wf-\frac{1}{2}\partial_t u (\partial_y \Wf)^2\right) .
		\end{aligned}
	\end{equation}
	We consider the case $\omega(z)=(2+z)^{\alpha+1}$, integrate the above equality over the exterior region $\dext_{T_0}$ and use the Stokes' theorem. We obtain that
	\begin{equation}\label{eneq}
		\begin{aligned}
			&\|\Wf\|^2_{\Xext_{\text{quasi}, T_0}} + \iint_{\q C_{[2, T_0]}} (2+r-t)^{\alpha+1}\left[ |\mathcal{T}\Wf|^2 + (1+u)|\partial_y\Wf|^2\right]\, d\sigma dydt \le \exquasi(2, \Wf) \\
			&\hspace{2cm}+ 2\iint_{\dext_{T_0}} (2+r-t)^{\alpha+1} \left( \Ff\, \partial_t \Wf + \partial_yu\, \partial_t \Wf\partial_y\Wf -\frac{1}{2} \partial_tu \, (\partial_y\Wf)^2\right)\, dxdydt.
		\end{aligned}
	\end{equation}
	The smallness of $u$ assures that the integral in the above left hand side is equivalent to the one in the left hand side of inequality \eqref{ineq_ex_lin}.
	From the assumption \eqref{estu2} and the Cauchy-Schwarz inequality we see that
	\[
	\begin{aligned}
		& \iint_{\dext_{T_0}} (2+r-t)^{\alpha+1}  |\partial_y u \partial_t\Wf \partial_y\Wf | dxdydt \lesssim \left\|(2+r-t)^{\frac{\alpha+1}{2}}  \partial_y u \cdot\partial_y\Wf\right\|_{L^1_tL^2_{xy}(\dext_{T_0})}\|\Wf\|_{\Xext_{T_0}} \\
		& \hspace{1.5cm}\lesssim \left\|(2+r-t)^\frac{1}{2}\partial_y u\right\|_{L^2_t L^\infty_{xy}(\dext_{T_0})}\left\|(2+r-t)^\frac{\alpha}{2}\partial_y \Wf\right\|_{L^2_{txy}(\dext_{T_0})}\|\Wf\|_{\Xext_{T_0}}\\
		& \hspace{1.5cm} \lesssim \ep \|\sqrt{l}\|_{L^2_t}\|\Wf\|^2_{\Xext_{T_0}} \lesssim \ep\|\Wf\|^2_{\Xext_{T_0}} 
	\end{aligned}
	\]
	and from \eqref{estut}
	$$ \iint_{\dext_{T_0}} (2+r-t)^{\alpha+1}  |\partial_t u (\partial_y\Wf)^2 | dxdydt \lesssim \ep \iint_{\dext_{T_0}} (2+r-t)^{\alpha}  (\partial_y\Wf)^2 dxdydt \lesssim  \ep\|\Wf\|^2_{\Xext_{T_0}} .$$
	If $\epsilon\ll 1$ is sufficiently small the above two integrals can be hence absorbed in the left hand side of \eqref{eneq}. Finally, from the Cauchy-Schwarz inequality we also have that
	\[
	\iint_{\dext_{T_0}} (2+r-t)^{\alpha+1} |\Ff| |\partial_t \Wf| dxdydt \le \left\|(2+r-t)^{\frac{\alpha+1}{2}}\Ff\right\|_{L^1_tL^2_{xy}(\dext_{T_0})} \|\Wf\|_{\Xext_{T_0}}.
	\]
	\endproof
	
	If we assume that the function $u$ satisfies the pointwise bounds \eqref{estu1}-\eqref{estut} in the whole exterior region $\dext$ we can also recover an energy inequality on the exterior truncated hyperboloids $\hex_s$. In the following proposition $\q C_{[2, s]}$ denotes the lateral boundary of $\hex_{[2,s]}$
	\[
	\q C_{[2, s]} = \left\{(t,x): 2\le t\le (s^2+1)/2, \,  r=t-1\right\}\times \m S^1
	\]
	and $\|\cdot\|_{\Xext_\infty}$ is the stronger norm over the time interval $[2,\infty)$
	\[
	\|\Wf\|_{\Xext_\infty} := \lim_{T\rightarrow\infty}\|\Wf\|_{\Xext_T}. 
	\]
	\begin{proposition}\label{prop:ext_hyp}
		Assume that $\Wf$ is solution to the linear inhomogeneous equation \eqref{linearized} with $u$ satisfying the decay properties \eqref{estu1}-\eqref{estut} in the whole exterior region $\dext$. 
		Then
		\begin{align*}
			&	\enexh(s, \Wf) + \iint_{\CT}  |\mathcal{T}\Wf|^2 + |\partial_y\Wf|^2 \, d\sigma dy dt \\
			& \hspace{4cm}\lesssim E^{\text{ex},0}(2, \Wf) + \ep  \|\Wf\|^2_{X^{\text{ex},0}_\infty} + \|\Ff\|_{L^1_tL^2_{xy}(\hex_{[2, s]})} \|\Wf\|_{X^{\text{ex},0}_\infty}
		\end{align*}
		for any $s\ge 2$.
	\end{proposition}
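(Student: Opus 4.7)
The proof mirrors that of Proposition~\ref{prop: en_ineq_ext} with the trivial weight $\omega\equiv 1$ in place of $(2+r-t)^{\alpha+1}$, and with integration region $\hex_{[2,s]}$, whose boundary splits into the bottom slice $\Sext_2\cap\hex_{[2,s]}$, the lateral null face $\CT$, and the hyperboloidal face $\hex_s$. The choice $\omega\equiv 1$ is dictated by the definition of $\enexh(s,\Wf)$, and as usual the smallness of $u$ lets me work throughout with $\enexhq$ in place of $\enexh$.

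Setting $\omega\equiv 1$ in the divergence identity \eqref{weighted_div} eliminates the $\omega'$ term and leaves
\[
\partial_t\Wf\bigl[\Box_{x,y}\Wf+u\partial_y^2\Wf\bigr]=-\tfrac12\partial_tQ+\mathrm{div}_x(\partial_t\Wf\,\nabla_x\Wf)+\partial_y\bigl((1+u)\partial_t\Wf\,\partial_y\Wf\bigr)-\partial_yu\,\partial_t\Wf\,\partial_y\Wf+\tfrac12\partial_tu\,(\partial_y\Wf)^2,
\]
with $Q=(\partial_t\Wf)^2+|\nabla_x\Wf|^2+(1+u)(\partial_y\Wf)^2$. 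I integrate over $\hex_{[2,s]}$ and apply Stokes' theorem, identifying the three boundary contributions. On the bottom slice the flux is the unweighted quasilinear energy at $t=2$, itself bounded by $E^{\text{ex},0}(2,\Wf)$ since $(2+r-t)\ge 1$ on $\dext$. On $\hex_s$, the identity $|\nabla_x\Wf|^2+\tfrac{2}{t}x\cdot\nabla_x\Wf\,\partial_t\Wf=|\op\Wf|^2-\tfrac{|x|^2}{t^2}|\partial_t\Wf|^2$, combined with the Euclidean outward normal $(t,-x,0)/\sqrt{t^2+|x|^2}$ and the area element $\tfrac{\sqrt{t^2+|x|^2}}{t}\,dxdy$, reproduces exactly $\enexhq(s,\Wf)$. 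On the null cone $\CT$, whose outward co-normal is $(1,-x/r,0)/\sqrt 2$, the algebraic identity $\sum_i|\mathcal T_i\Wf|^2=|\nabla_x\Wf|^2+2\partial_r\Wf\,\partial_t\Wf+(\partial_t\Wf)^2$ makes the flux collapse to $|\mathcal T\Wf|^2+(1+u)|\partial_y\Wf|^2$; this null-structure cancellation is what yields the good tangential and periodic-derivative control.

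The three resulting volume integrals are then estimated perturbatively. The source term satisfies $|\iint_{\hex_{[2,s]}}\partial_t\Wf\,\Ff\,dxdydt|\le\|\Ff\|_{L^1_tL^2_{xy}(\hex_{[2,s]})}\|\Wf\|_{X^{\text{ex},0}_\infty}$ by Cauchy--Schwarz in $(x,y)$ together with $\sup_t\|\partial_t\Wf\|_{L^2_{xy}(\Sext_t)}\lesssim\|\Wf\|_{X^{\text{ex},0}_\infty}$. For the quasilinear cross term, since $2+r-t\ge 1$ on $\dext$ and $\alpha>0$ one has $(2+r-t)^{-1/2}\le(2+r-t)^{\alpha/2}$, hence hypothesis~\eqref{estu2} gives
\[
\iint_{\hex_{[2,s]}}|\partial_yu\,\partial_t\Wf\,\partial_y\Wf|\,dxdydt\lesssim\ep\int\sqrt{l(t)}\,\|\partial_t\Wf\|_{L^2_{xy}}\bigl\|(2+r-t)^{\alpha/2}\partial_y\Wf\bigr\|_{L^2_{xy}}\,dt\lesssim\ep\|\Wf\|_{X^{\text{ex},0}_\infty}^2
\]
after a three-factor Hölder in $t$ ($L^\infty_t\cdot L^2_t\cdot L^2_t$) and using $\sqrt l\in L^2_t$, which follows from~\eqref{l(t)}. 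The $\partial_tu$ term is treated the same way via~\eqref{estut} and $(2+r-t)^{-1}\le(2+r-t)^\alpha$. Combining these contributions and passing back from $\enexhq$ to $\enexh$ using $|u|\le 1/10$ yields the stated inequality.

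The step requiring most care is the null-flux identification on $\CT$: one must verify that the Euclidean boundary contribution closes, up to a harmless positive constant, into $|\mathcal T\Wf|^2+(1+u)|\partial_y\Wf|^2$ with the correct sign. Once this geometric computation is in hand the argument is purely perturbative, relying only on the $L^2_t$-integrability of $\sqrt l$ furnished by the bootstrap information~\eqref{l(t)} and on the equivalence of the quasilinear and linear energies provided by smallness of $u$.
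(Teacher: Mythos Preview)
Your approach is exactly the paper's: set $\omega\equiv 1$ in \eqref{weighted_div}, integrate over $\hex_{[2,s]}$, identify the three boundary fluxes, and bound the quasilinear bulk terms perturbatively using \eqref{estu2}--\eqref{estut}. The boundary identifications you sketch are correct.

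There is, however, a slip in your bulk estimate. You bound $|\partial_y u|\lesssim \ep\sqrt{l(t)}(2+r-t)^{-1/2}$ and then invoke $(2+r-t)^{-1/2}\le (2+r-t)^{\alpha/2}$ to put an $(2+r-t)^{\alpha/2}$ weight on $\partial_y\Wf$. But the norm appearing in the statement is $X^{\text{ex},0}_\infty$, i.e.\ the case $\alpha=0$ of \eqref{Xext}, which only controls the \emph{unweighted} space-time $L^2$ norm of $\partial_y\Wf$; it does not bound $\|(2+r-t)^{\alpha/2}\partial_y\Wf\|_{L^2_tL^2_{xy}}$. The same objection applies to your treatment of the $\partial_t u$ term via $(2+r-t)^{-1}\le (2+r-t)^\alpha$. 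As written, your argument proves the inequality with $\|\Wf\|_{\Xext_\infty}$ in place of $\|\Wf\|_{X^{\text{ex},0}_\infty}$, which is a strictly weaker conclusion.

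The fix is immediate: since $(2+r-t)\ge 1$ on $\dext$, simply use $(2+r-t)^{-1/2}\le 1$ and $(2+r-t)^{-1}\le 1$. Then $|\partial_y u|\lesssim \ep\sqrt{l(t)}$ and $|\partial_t u|\lesssim \ep$ pointwise, and the two bulk terms are bounded by
\[
\ep\int_2^\infty\sqrt{l(t)}\,\|\partial_t\Wf\|_{L^2_{xy}}\|\partial_y\Wf\|_{L^2_{xy}}\,dt
\quad\text{and}\quad
\ep\int_2^\infty\|\partial_y\Wf\|_{L^2_{xy}}^2\,dt,
\]
each of which is $\lesssim \ep\|\Wf\|_{X^{\text{ex},0}_\infty}^2$ by Cauchy--Schwarz in $t$ and the definition of $X^{\text{ex},0}_\infty$. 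This is precisely how the paper closes the estimate.
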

	\proof
	From the smallness assumption on $u$ it will be enough to prove the the statement with $\enexh(t, \Wf)$ and $E^{\text{ex},0}(2, \Wf)$ replaced by $\enexhq(t, \Wf)$ and $E^{\text{ex},0}_\text{quasi}(2, \Wf)$ respectively.
	We integrate the relation \eqref{weighted_div} in the case where $\omega\equiv 1$ over the region $\hex_{[2, s]}$ which we foliate by the constant time slices $\Sigma^s_t$ for $t\ge 2$ (see picture \ref{fig:hext}), where 
	\[
	\Sigma^s_t =\left\{x\in \m R^3: r\ge \max(t-1,  \sqrt{t^2-s^2}) \right\}\times \m S^1.
	\] 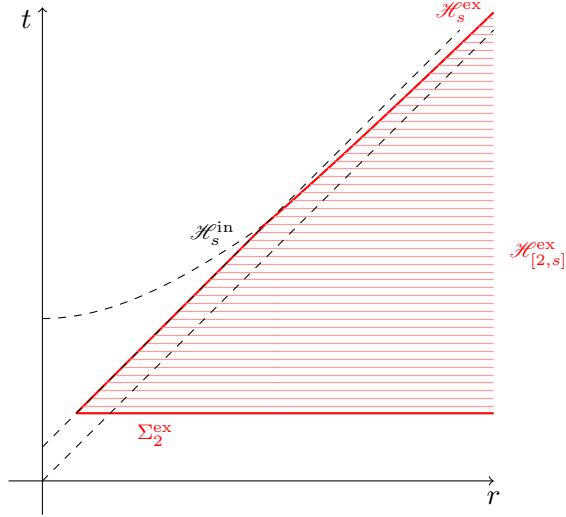
\begin{figure}[h]
\begin{center}
\begin{tikzpicture}[scale=1.5]

\draw[->] (0,-0.3) -- (0,4.2);
\draw[->] (-0.3,0) -- (4,0);
\node[left, below] at (4,0) {\small $r$};
\node[left] at (0,4.1) {\small $t$};

\draw[dashed] [domain=0:2] plot(\x, {((1.44)^2+(0.8)*(\x)^2)^(0.5)});
\node[above, left] at (1.8, 2.2) {\tiny $\hin_s$};

\draw[red,thick, name path=A] [domain=2:4] plot(\x, {((1.13)^2+(\x)^2)^(0.5)});
\node[red, left] at (4, 4.16) {\tiny $\hex_s$};

\draw[red, thick, name path = B]  (0.3,0.6) -- (4,0.6);
\node[red, thick, below] at (1, 0.6) {\tiny $\Sext_2$};

\draw[red, thick] [domain=0.3:2] plot(\x, \x+0.3);

\tikzfillbetween[of=A and B]{pattern = horizontal lines, pattern color = red!40}

\node[red, thick] at (4.4,2) {\tiny $\hex_{[2, s]}$};

\draw[dashed] [domain=0:3.7] plot(\x,\x+0.3);

\draw[dashed] [domain=0:4] plot(\x,\x);


\end{tikzpicture}
\caption{Vertical section of the region $\hex_{[2, s]}$ and its foliation projected onto $\m R^{1+3}$}
\label{fig:hext}
\end{center}
\end{figure}
	\noindent We obtain that
	\[
	\begin{aligned}
		& \enexhq(s, \Wf) + \iint_{\CT}  |\mathcal{T}\Wf|^2 + (1+u)|\partial_y\Wf|^2\, d\sigma dy dt \le E^{\text{ex},0}_{\text{quasi}}(2, \Wf)  \\
		&+ \iint_{\hex_{[2, s]}} \partial_yu\, \partial_t\Wf \partial_y\Wf - \frac{1}{2}\partial_tu \, (\partial_y\Wf)^2 \, dxdydt + \iint_{\hex_{[2, s]}} \Ff \partial_t \Wf\, dxdydt.
	\end{aligned}
	\]
	From the assumptions on the derivatives of $u$, the Cauchy-Schwarz inequality and the definition of the norm $X^{\text{ex},0}_\infty$ we immediately see that 
	\[
	\begin{aligned}
		\iint_{\hex_{[2, s]}} \partial u\, \partial_y\Wf\,  \partial \Wf\, dxdydt &\le \int_2^\infty \|\partial u\|_{L^\infty_{xy}(\Sigma^s_t)}\|\partial_y\Wf\|_{L^2_{xy}(\Sigma^s_t)}\|\partial \Wf\|_{L^2_{xy}(\Sigma^s_t)}\, dt \\
		& \lesssim \|\Wf\|_{X^{\text{ex},0}_\infty}\int_2^\infty \ep \sqrt{l(t)}\|\partial_y\Wf\|_{L^2_{xy}(\Sigma^s_t)}\, dt\lesssim \ep \|\Wf\|^2_{X^{\text{ex},0}_\infty}.
	\end{aligned}
	\]
	Furthermore
	\[
	\iint_{\hex_{[2, s]}} \Ff \partial_t \Wf\, dxdydt \le \|\Ff\|_{L^1_tL^2_{xy}(\hex_{[2,s]})}\|\Wf\|_{X^{\text{ex},0}_\infty}.
	\]
	\endproof

	We now prove the interior energy inequality for \eqref{linearized}. In the following proposition the hyperbolic lifespan $s_0$ is arbitrary and $\CT$ will denote the later boundary of the hyperbolic region $\hin_{[2, s]}$
	\[
	\CT = \left\{ (t,x) : t=r+1\text{ and }  3/2\le t \le (s^2-1)/2\right\}\times \m S^1.
	\]
	
	\begin{proposition}\label{Prop:en_lin}
		Let $\Wf$ be a solution to the equation \eqref{linearized} and suppose that $u$ is a function that satisfies the following bounds in the hyperbolic region $\hin_{[2, s_0]}$
		\begin{align}
			\label{boot_u}\|u\|_{L^\infty(\hin_{[2, s_0]})}&\le 1/10 \\
			\label{boot_u0}|\partial_t u_0(t,x)|&\lesssim \ep t^{-\frac{1}{2}}s^{-1}\\
			\label{boot_ut}\|\partial \ut(t,x, \cdot)\|_{L^\infty(\m S^1)}&\lesssim \ep t^{-\frac{3}{2}+\delta}
		\end{align}
		for some small $\delta>0$, where $u_0=\int_{\m S^1}u(t,x,y) dy$ and $\ut = u-u_0$. Then
		\begin{equation}\label{lin_energy}
			\enint(s, \Wf)  \lesssim \enint(2, \Wf) + \iint_{\CT} |\mathcal{T} \Wf|^2 + |\partial_y\Wf|^2\,  d\sigma dydt  + \int_2^{s}\|\Ff\|_{L^2_{xy}(\hin_\tau)}\enint(\tau, \Wf)^\frac{1}{2}\, d\tau
		\end{equation}
		for all $s\in [2, s_0]$,
		where $d\sigma$ is the surface element on the sphere $\m S^2$.
	\end{proposition}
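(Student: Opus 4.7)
The argument will parallel the exterior case of Proposition~\ref{prop: en_ineq_ext}, the difference being that now the divergence identity is integrated over the curved region $\hin_{[2,s]}$ instead of a flat strip. Starting from \eqref{weighted_div} with $\omega\equiv 1$ (i.e.\ multiplying \eqref{linearized} by $\partial_t\Wf$), I would integrate over $\hin_{[2,s]}$ and apply Stokes' theorem. The boundary of $\hin_{[2,s]}$ consists of the top hyperboloidal cap $\hin_s$, the bottom cap $\hin_2$, and the lateral null cone $\CT\subset\{t=r+1\}$. The fluxes through the two hyperboloidal caps reproduce the quasilinear interior energies $\equasi(s,\Wf)$ and $\equasi(2,\Wf)$, which under the smallness hypothesis \eqref{boot_u} are comparable to $\enint(s,\Wf)$ and $\enint(2,\Wf)$ respectively.

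For the lateral piece, $\CT$ is null with generator $L = \partial_t+\partial_r$. A direct computation of the flux of the energy--momentum tensor with $\partial_t$ multiplier through $\CT$ shows that the transverse null derivative $(\partial_t - \partial_r)\Wf$ cancels, leaving only $|L\Wf|^2$, the squared angular derivatives of $\Wf$, and $(1+u)|\partial_y\Wf|^2$. Since $L = \sum_j (x_j/r)\,\mathcal{T}_j$ and each Euclidean angular derivative is likewise a linear combination of $\mathcal{T}_1,\mathcal{T}_2,\mathcal{T}_3$, this flux is controlled by $|\mathcal{T}\Wf|^2 + |\partial_y\Wf|^2$, producing the lateral boundary term in \eqref{lin_energy}. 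For the source term I would apply the hyperboloidal coarea formula $dx\,dt = (\tau/t)\,dx\,d\tau$ followed by Cauchy--Schwarz in $(x,y)$, using that $(\tau/t)\,\partial_t\Wf$ is controlled by the energy density on $\hin_\tau$:
\[
\iint_{\hin_{[2,s]}}\Ff\,\partial_t\Wf\,dxdtdy \;\le\; \int_2^s \|\Ff\|_{L^2_{xy}(\hin_\tau)}\,\enint(\tau,\Wf)^{1/2}\,d\tau.
\]

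It then remains to absorb the quasilinear bulk errors $\iint \partial_y u\,\partial_t\Wf\,\partial_y\Wf$ and $\iint \partial_t u\,(\partial_y\Wf)^2$. Using $u = u_0 + \ut$, I note that $\partial_y u_0\equiv 0$ so only $\partial_y\ut$ enters the first error; the decay bounds \eqref{boot_u0}--\eqref{boot_ut} on $\partial u$, combined with the inequality $t\ge\tau$ on $\hin_\tau$ and the weight $(\tau/t)$ built into both the volume form and the $\partial_t\Wf$ energy density, produce error terms of the form $\ep\int_2^s \tau^{-1-\kappa}\enint(\tau,\Wf)\,d\tau$ for some $\kappa>0$; taking a supremum in $s$ and using the smallness of $\ep$ absorbs these into the left-hand side, yielding \eqref{lin_energy}. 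The chief obstacle is the null-cone flux calculation, i.e.\ verifying that the transverse null derivative cancels exactly in the flux through $\CT$ so that only good derivatives survive in the lateral boundary term---this is precisely what later permits this interior estimate to be glued to the exterior estimate of Proposition~\ref{prop:ext_hyp} along $\{t=r+1\}$. A secondary difficulty is the tightness of the decay budget for $\partial\ut$ in \eqref{boot_ut}: only the choice $\delta<1/2$ makes the resulting $\tau^{-3/2+\delta}$ error factor integrable, and the $(\tau/t)$ weights must be distributed carefully between the volume element and the energy density for the argument to close.
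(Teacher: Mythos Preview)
Your proposal is correct and follows essentially the same approach as the paper: integrate the divergence identity \eqref{weighted_div} with $\omega\equiv 1$ over $\hin_{[2,s]}$, foliate by $\hin_\tau$, and use the smallness of $u$ together with the decay bounds \eqref{boot_u0}--\eqref{boot_ut} to control the quasilinear bulk errors by an integrable power of $\tau$ times $\enint(\tau,\Wf)$. The paper is terser on the null-cone flux computation and on the final absorption step, but the argument is the same.
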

	\proof
	We integrate the equality \eqref{weighted_div} in the case where $\omega\equiv 1$ over the region $\hin_{[2, s]}$ (see picture \ref{fig:hin}) for any $s\in [2, s_0]$ and use the Stokes' theorem together with system \eqref{linearized}. We foliate $\hin_{[2, s]}$ by hyperboloids $\hin_\tau$ with $\tau\in [2, s_0]$ and obtain the following
	\begin{equation}\label{Alinhac}
		\begin{aligned}
			& \equasi(s, \Wf) \le \equasi(2, \Wf) + \iint_{\CT}  |\mathcal{T}\Wf|^2 + (1+u)|\partial_y\Wf|^2\, d\sigma dy dt \\
			&+ \int_2^{s}\hspace{-5pt} \int_{\hin_\tau}  (\tau/t) \partial_yu\, \partial_t\Wf \partial_y\Wf - \frac{1}{2} (\tau/t) \partial_tu \, (\partial_y\Wf)^2 \, dxdyd\tau +  \int_2^{s}\hspace{-5pt} \int_{\hin_\tau}  (\tau/t) \Ff \partial_t \Wf\, dxdyd\tau.
		\end{aligned}
	\end{equation}
	The integral on the null boundary $\q C_{[2,s]}$ in the above right hand side is bounded by its counterpart in the right hand side of \eqref{lin_energy} thanks to the smallness assumption \eqref{boot_u}.
	From the assumption \eqref{boot_ut} and the fact that $\tau\le t$ we derive that
	\begin{multline*}
		\left| \int_2^{s}\hspace{-5pt} \int_{\hin_\tau} (\tau/t)  \partial_yu\, \partial_t\Wf \partial_y\Wf - \frac{1}{2} (\tau/t) \partial_t\ut \, (\partial_y\Wf)^2   \, dxdyd\tau\right|   \\
		\lesssim \int_{2}^{s}  \left\|\partial \ut\right\|_{L^\infty(\hin_\tau)}\enint(\tau, \Wf) \, d\tau\lesssim \epsilon \int_{2}^{s} \tau^{-\frac{3}{2} + \delta}  \enint(\tau, \Wf) \, d\tau 
	\end{multline*}
	while from \eqref{boot_u0} we have
	\[
	\begin{aligned}
		\left| \int_2^{s}\hspace{-5pt} \int_{\hin_\tau} \frac{1}{2}(\tau/t) \partial_tu_0 \, (\partial_y\Wf)^2   \, dxdyd\tau\right| & \lesssim \epsilon \int_2^{s} \tau^{-\frac{3}{2}} \enint(\tau, \Wf)\, d\tau.
	\end{aligned}
	\]
	Finally, the Cauchy-Schwarz inequality yields
	\[
	\left|\int_2^{s}\hspace{-5pt} \int_{\hin_\tau} (\tau/t) \Ff \partial_t \Wf \, dxdyd\tau\right| \le \int_2^{s_0} \|\Ff\|_{L^2_{xy}(\hin_\tau)}\enint(\tau, \Wf)^\frac{1}{2}\, d\tau.
	\]
	\endproof
	
	As detailed in Section \ref{Sec: VF}, it will be important to distinguish between the two components $W_0$ and $\Wt$ of the solution $W$ to \eqref{eq_W}, in particular to show that the energies associated to the zero mode $W_0$ are uniformly bounded in time. We will make use of the following classical result on the energy on interior truncated hyperboloids for linear inhomogeneous wave equations on the flat space $\mathbb{R}^{1+3}$. 
	\begin{proposition}\label{prop:linear_wave0}
		Let $\Wf_0$ be a solution to the following linear inhomogeneous wave equation
		\begin{equation}\label{linear_wave0}
			(-\partial^2_t + \Delta_x)\Wf_0 = \Ff_0, \qquad (t,x)\in \m R\times \m R^3.
		\end{equation}
		For all $s\in [2, s_0]$ we have the following energy inequality 
		\[
		\enint(s, \Wf_0) \le \enint(2,  \Wf_0) + \int_{\CT} |\mathcal{T}\Wf_0|^2\, d\sigma dt + \int_2^{s}   \left\|\Ff_0\right\|_{L^2_x(\hin_\tau)}\enint(\tau, \Wf_0)^{1/2}\, d\tau.
		\]
	\end{proposition}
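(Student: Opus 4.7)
The plan is to mirror the argument of Proposition \ref{Prop:en_lin} in the simpler setting where the quasilinear term and all $y$-dependence are absent, so no error terms coming from derivatives of $u$ appear.

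First, I would record the flat analogue of the divergence identity \eqref{weighted_div}: multiplying \eqref{linear_wave0} by $\partial_t \Wf_0$ gives
\begin{equation*}
\partial_t\Wf_0\,\Ff_0 = -\tfrac{1}{2}\partial_t\bigl((\partial_t\Wf_0)^2+|\nabla_x\Wf_0|^2\bigr)+\mathrm{div}_x\bigl(\partial_t\Wf_0\,\nabla_x\Wf_0\bigr).
\end{equation*}
I would then integrate this identity over the truncated hyperbolic slab $\hin_{[2,\tau]}$ for $\tau\in[2,s]$ (projected onto $\m R^{1+3}$, since $\Wf_0$ is independent of $y$) and apply Stokes' theorem. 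The boundary of $\hin_{[2,\tau]}$ splits into the two truncated hyperboloids $\hin_\tau$ and $\hin_2$ and the null lateral boundary $\q C_{[2,\tau]}\subset\{r=t-1\}$.

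The key geometric computation is the identification of the boundary fluxes. On the hyperboloids, the outward conormal is proportional to $(t,-x)/s$, so the flux of the energy current reproduces exactly the integrand of $\enint$ given in \eqref{enint} restricted to the zero mode, namely $(s/t)^2|\partial_t\Wf_0|^2+|\op\Wf_0|^2$, using the algebraic identity $(\partial_t\Wf_0)^2+|\nabla_x\Wf_0|^2 - 2(x_j/t)\partial_t\Wf_0\,\partial_j\Wf_0 = (s/t)^2(\partial_t\Wf_0)^2+\sum_j(\op_j\Wf_0)^2$ up to a $(t/s)$-Jacobian that combines correctly with $ds$. On the null boundary $\q C_{[2,\tau]}$ the conormal is null and the same computation gives exactly a quadratic form in the tangential derivatives, i.e.\ $|\mathcal{T}\Wf_0|^2$, which matches the boundary term in the statement. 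This produces the identity
\begin{equation*}
\enint(\tau,\Wf_0) \le \enint(2,\Wf_0)+\int_{\q C_{[2,\tau]}}|\mathcal{T}\Wf_0|^2\,d\sigma dt +\int_2^\tau\!\!\int_{\hin_{\tau'}}(\tau'/t)\,\Ff_0\,\partial_t\Wf_0\,dxd\tau'.
\end{equation*}

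Finally, Cauchy-Schwarz on each hyperboloid, combined with the weight $(\tau'/t)\le 1$ which is absorbed by the $(s/t)^2|\partial_t\Wf_0|^2$ term in $\enint$, bounds the source integral by $\int_2^\tau \|\Ff_0\|_{L^2_x(\hin_{\tau'})}\enint(\tau',\Wf_0)^{1/2}\,d\tau'$, giving the claimed inequality for every $s\in[2,s_0]$. There is no genuine obstacle here beyond the bookkeeping of the hyperboloidal area element and the conormal; the only point requiring care is the area/Jacobian computation that converts the divergence-theorem boundary integral into the energy density of \eqref{enint}, and making sure the lateral boundary is indeed a null hypersurface on which the quadratic form in $\partial \Wf_0$ collapses to $|\mathcal{T}\Wf_0|^2$.
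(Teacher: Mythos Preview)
Your proposal is correct and is precisely the argument one would write: it is the specialization of the proof of Proposition~\ref{Prop:en_lin} to the case $u\equiv 0$ and no $y$-dependence, using the divergence identity \eqref{weighted_div} with $\omega\equiv 1$, integrated over $\hin_{[2,s]}$. The paper itself does not spell out a proof of this proposition, introducing it instead as a ``classical result''; your argument is exactly the expected one, and your identification of the boundary fluxes on $\hin_s$, $\hin_2$, and the null lateral piece $\q C_{[2,s]}$ is consistent with how the paper handles the analogous terms in the proof of Proposition~\ref{Prop:en_lin}.
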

	
	We also state below the interior and exterior conformal energy inequalities for linear inhomogeneous wave equations on $ \m R^{1+3}$. We will need to have a control on the higher order conformal energies of the zero-mode $W_0$ of our solution $W$ in order to recover pointwise bounds for $W_0$ and $ZW_0$ later in the paper.
	
	\begin{proposition}\label{Prop:Conf_En}
		Let $\Wf_0$ be a solution to \eqref{linear_wave0}. We have the following inequality
		\[
		\begin{aligned}
			\ecin(s, \Wf_0) & \le  \ecin(2, \Wf_0) + \int_2^s \left\| s\Ff_0\right\|_{L^2(\hin_\tau)}\ecin(\tau, \Wf_0)^\frac{1}{2}d \tau \\
			& + \int_{\mathcal{C}_{[2, s]}}\left|
			(t+r)(\partial_t \Wf_0+\partial_r \Wf_0) + 2\Wf_0\right|^2+(t-r)^2(|\nabla \Wf_0|^2-(\partial_r \Wf_0)^2)\, d\sigma dt ,
		\end{aligned}
		\]
		where $d\sigma$ is the surface element on the sphere $\m S^2$.
	\end{proposition}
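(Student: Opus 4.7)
The plan is to apply the conformal Morawetz multiplier method to \eqref{linear_wave0} with multiplier $M\Wf_0 := K\Wf_0 + 2t\Wf_0$, where $K = (t^2+r^2)\partial_t + 2rt\partial_r$. A direct but somewhat tedious algebraic computation produces a pointwise divergence identity
\[
-2\,M\Wf_0\,\Box_{t,x}\Wf_0 \;=\; \partial_t P^0 + \partial_i P^i,
\]
where $P^\alpha$ is quadratic in $(\Wf_0,\partial\Wf_0)$. The defining algebraic property of this multiplier is that the flux density $P^\alpha n_\alpha$ on the hyperboloid $\hin_s$ --- with future-directed unit conormal $n = (t,-x)/s$ and combined with the Jacobian $s/t$ converting the induced surface measure to $dx$ --- reproduces \emph{exactly} the integrand $\frac{1}{t^2}|K\Wf_0+2t\Wf_0|^2 + \frac{s^2}{t^2}\sum_{i=0}^3|\Omega_{0i}\Wf_0|^2$ of the definition \eqref{conf_en} of $\ec(s,\Wf_0)$. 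The lower-order correction $+2t\Wf_0$ is precisely what is needed to absorb the unwanted $\Wf_0 \partial\Wf_0$ cross terms that would otherwise spoil this identification.

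With this identity at hand, I would integrate over the hyperbolic slab $\hin_{[2,s]}$ and apply Stokes' theorem. Its boundary consists of the hyperboloidal caps $\hin_s$ (above) and $\hin_2$ (below), and the null lateral piece $\mathcal{C}_{[2,s]}\subset\{t=r+1\}$: the caps produce $\ec(s,\Wf_0)$ and $\ec(2,\Wf_0)$, respectively, by the property above. For the flux across $\mathcal{C}_{[2,s]}$, whose null conormal is proportional to $L = \partial_t + \partial_r$, I decompose $\partial\Wf_0$ in the null frame $(L,\underline L,\text{sphere-tangential})$ with $\underline L = \partial_t - \partial_r$, use $t-r = 1$, and rearrange $P^\alpha L_\alpha$ algebraically; this should recover the non-negative cone density $|(t+r)(\partial_t + \partial_r)\Wf_0 + 2\Wf_0|^2 + (t-r)^2(|\nabla\Wf_0|^2 - (\partial_r\Wf_0)^2)$ appearing in the statement (the second summand being the angular gradient squared). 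Depending on its sign relative to $\ec(s)$ in the identity, this term is either transferred to the right-hand side as stated, or shown to have a favorable sign on the left and discarded.

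For the inhomogeneous bulk term, using $\Box_{t,x}\Wf_0 = -\Ff_0$ and the foliation of $\hin_{[2,s]}$ by hyperboloids (with $dt\,dx = (\tau/t)\,d\tau\,dx$), the contribution becomes $\int_2^s\int_{\hin_\tau}(\tau/t)\,M\Wf_0\,\Ff_0\,dx\,d\tau$. Splitting the integrand as $(M\Wf_0/t)\cdot(\tau\Ff_0)$ and noting $\|M\Wf_0/t\|_{L^2(\hin_\tau)}^2 \le \ec(\tau,\Wf_0)$ directly from \eqref{conf_en}, the Cauchy--Schwarz inequality on each slice gives
\[
\int_{\hin_\tau}\tfrac{\tau}{t}\,|M\Wf_0\,\Ff_0|\,dx \;\le\; \|\tau\Ff_0\|_{L^2(\hin_\tau)}\,\ec(\tau,\Wf_0)^{1/2} \;\le\; \|s\Ff_0\|_{L^2(\hin_\tau)}\,\ec(\tau,\Wf_0)^{1/2},
\]
the last inequality since $\tau\le s$ throughout the integration range. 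The main technical obstacle is therefore the algebraic book-keeping: verifying the exact match between the hyperboloidal flux and the density defining $\ec$ in \eqref{conf_en}, and between the null-cone flux and the stated cone integrand. These are standard but delicate null-frame computations in the Morawetz calculus, requiring careful re-expression of Cartesian derivatives in terms of $\Omega_{0i}$, $K$ and the null derivatives $L,\underline L$; once these identifications are in place, no further analytic subtlety arises.
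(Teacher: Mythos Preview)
Your proposal is correct and follows essentially the same approach as the paper: the paper writes out the explicit divergence identity for the multiplier $K\Wf_0 + 2t\Wf_0$ applied to $\Box_x\Wf_0$, then states that the result follows from integrating over $\hin_{[2,s]}$ and applying Stokes' theorem. Your write-up is in fact more detailed than the paper's own proof, which does not spell out the identification of the hyperboloidal flux with the integrand of \eqref{conf_en}, the null-cone flux computation, or the Cauchy--Schwarz step on each slice.
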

	\proof
	Let $K=(t^2+r^2)\partial_t  +2rt\partial_r$ denote the Morawetz multiplier. Then
	\begin{equation}\label{conformal}
		\begin{aligned}
			& \left(K\Wf_0 + 2t\Wf_0\right)\Box_x\Wf_0\\
			&  = -\partial_t\left[\frac{1}{2}(t^2+r^2)\left(|\partial_t \Wf_0|^2 + |\nabla_x\Wf_0|^2\right) + 2rt\partial_t\Wf_0 \partial_r\Wf_0 + 2t\Wf_0 \partial_t \Wf_0 - \Wf_0^2\right] \\
			& + \text{div}_x\left[(t^2+r^2)\partial_t \Wf_0 \nabla_x\Wf_0 + 2rt \partial_r\Wf_0 \nabla_x \Wf_0 + tx\left(|\partial_t\Wf_0|^2 - |\nabla_x\Wf_0|^2) + 2t\Wf_0 \nabla_x\Wf_0\right)\right]
		\end{aligned}
	\end{equation}
	and the result of the statement follows from the integration of the above equality over the interior hyperbolic region $\hin_{[2,s]}$ and from the Stokes' theorem.
	\endproof

	\begin{proposition}\label{prop:conf_ext}
		Let $\Wf_0$ be a solution to the equation \eqref{linear_wave0}. Then
		\[
		\begin{aligned}
			&\exec(T, \Wf_0) + \int_{\q C_{[2, T]}}\left| (t+r)(\partial_t \Wf_0+\partial_r \Wf_0) + 2\Wf_0\right|^2+(t-r)^2(|\nabla \Wf_0|^2-(\partial_r \Wf_0)^2)\, d\sigma dt\\
			& \hspace{4cm}\le \exec(2, \Wf_0) + \int_2^T \left\|(t+r)\Ff_0 \right\|_{L^2_x(\Sext_t)}\exec(t, \Wf_0)^\frac{1}{2} dt,
		\end{aligned}
		\]
		where $d\sigma$ is the surface element on the sphere $\m S^2$.
	\end{proposition}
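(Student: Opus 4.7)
The proof mirrors that of Proposition~\ref{Prop:Conf_En}: I would use the same Morawetz identity~\eqref{conformal} associated to the multiplier $K=(t^2+r^2)\partial_t+2rt\partial_r$, but now integrate it over the exterior slab $\dext_{[2,T]}=\{(t,x):2\le t\le T,\, r\ge t-1\}$ rather than over the interior hyperbolic region $\hin_{[2,s]}$. After applying Stokes' theorem, the boundary $\partial \dext_{[2,T]}$ decomposes into three pieces: the flat top and bottom slices $\Sext_T$ and $\Sext_2$, and the incoming lateral null cone $\q C_{[2,T]}=\{r=t-1\}$.

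On each flat time slice the contribution is the integral of the $\partial_t$--component of the Morawetz current, namely $\tfrac12(t^2+r^2)(|\partial_t\Wf_0|^2+|\nabla_x\Wf_0|^2)+2rt\partial_t\Wf_0\partial_r\Wf_0+2t\Wf_0\partial_t\Wf_0-\Wf_0^2$. Rewriting $\scal=t\partial_t+r\partial_r$, $\Omega_{0i}=t\partial_i+x_i\partial_t$, and decomposing $|\nabla_x\Wf_0|^2$ into its radial and angular parts shows that this density is equivalent to the integrand defining $\ecex(t,\Wf_0)$, so the top and bottom boundary integrals recover the $\ecex(T,\Wf_0)$ and $\ecex(2,\Wf_0)$ terms appearing in the statement.

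The main obstacle is identifying the contribution on the null cone $\q C_{[2,T]}$. The outward conormal to $r=t-1$ is proportional to $(1,-x/r)$, which when contracted with the Morawetz current forces the $\partial_t$ and $\partial_r$ derivatives to appear in the tangent null combination $\partial_t+\partial_r$. A careful rearrangement of the linear-in-$\Wf_0$ correction $2t\Wf_0\partial_t\Wf_0-\Wf_0^2$ against the $(t^2+r^2)$ and $2rt$ coefficients then produces exactly the density $|(t+r)(\partial_t+\partial_r)\Wf_0+2\Wf_0|^2+(t-r)^2(|\nabla\Wf_0|^2-(\partial_r\Wf_0)^2)$ claimed in the statement. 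This density is manifestly nonnegative (the second term is essentially the squared angular gradient), so this boundary integral can be moved to the left-hand side.

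Finally, for the source contribution $\int_2^T\int_{\Sext_t}(K\Wf_0+2t\Wf_0)\Ff_0\,dx\,dt$, the decomposition $K=(t+r)^2L+(t-r)^2\bar L$ with $L=\tfrac12(\partial_t+\partial_r)$, $\bar L=\tfrac12(\partial_t-\partial_r)$ gives the pointwise bound $|K\Wf_0+2t\Wf_0|\lesssim(t+r)\,\cdot\,(\text{density of }\ecex)^{1/2}$. A slicewise Cauchy--Schwarz inequality on $\Sext_t$ then yields $\int_{\Sext_t}|K\Wf_0+2t\Wf_0||\Ff_0|\,dx\lesssim\|(t+r)\Ff_0\|_{L^2_x(\Sext_t)}\,\ecex(t,\Wf_0)^{1/2}$, and integration in $t\in[2,T]$ produces the stated right-hand side. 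The algebraic computation on the null cone is the only genuinely non-routine step; the remaining pieces are exactly parallel to the interior case.
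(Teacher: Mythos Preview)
Your proposal is correct and follows essentially the same route as the paper: integrate the Morawetz identity~\eqref{conformal} over $\dext_T$ and apply Stokes' theorem. The paper condenses both the flat-slice identification with $\ecex$ and the source-term Cauchy--Schwarz step into the single algebraic identity $K\Wf_0+2t\Wf_0 = t(\scal\Wf_0+2\Wf_0)+r\Omega_{0r}\Wf_0$, whereas you reach the same conclusions via the null decomposition $K=(t+r)^2L+(t-r)^2\bar L$; these are equivalent rearrangements.
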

	\proof
	The result of the proposition follows from the integration of equality \eqref{conformal} over the region $\dext_T$ combined with Stokes' theorem and the following equality
	\[
	K\Wf_0 + 2t\Wf_0 = t \left(\scal \Wf_0 + 2\Wf_0\right) + r\Omega_{0r}\Wf_0.
	\]
	\endproof
	
{
		In the interior region we will recover pointwise bounds on $ZW_0$ from the higher conformal energies of $W_0$ via Klainerman-Sobolev inequalities on hyperboloids (see lemma \ref{Lem:KS}). This will require a control for the conformal energy of the solution on a portion of the hyperboloid $\q H_s$ in the exterior region $\dext$, that in turn will be obtained from a control of the conformal energy on the flat hypersurfaces $\Sigma^s_t$ defined below. We hence state the following modification of the conformal energy inequality.}
	
{
		\begin{proposition}\label{prop: twisted_conformal}
			For any $s\ge 2$, $s<T_1<T_2$ and any $t\in [T_1, T_2]$ let
			\[
			\Sigma^s_t:=\{x\in \m R^3: |x|\ge \sqrt{t^2-s^2}\}
			\]
			and
			\[
			\q E^{\text{c,s}}(t, \Wf_0) :=\int_{\Sigma^s_t}  \left|\scal W_0 + 2W_0\right|^2 + \sum_{i=0}^3 |\Omega_{0i}W_0|^2\, dx.
			\]
			Then
			\[
			\begin{aligned}
				& \q E^{\text{c,s}}(T_2, \Wf_0) + \int_{\q H_s\cap [T_1, T_2]} \frac{1}{t^2}|K\Wf_0+2t\Wf_0|^2 + \frac{s^2}{t^2}\sum_{j=0}^3|\Omega_{0j}\Wf_0|^2\, dx \\
				&\hspace{1cm} \le  \q E^{\text{c,s}}(T_1, \Wf_0) + \int_{T_1}^{T_2}\left\|(t+r)\Ff_0\right\|_{L^2_x(\Sigma^s_t)}\, \q E^{\text{c,s}}(t, \Wf_0)^\frac{1}{2} dt
			\end{aligned}
			\]
		\end{proposition}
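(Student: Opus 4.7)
The plan is to mirror the proof of Proposition \ref{prop:conf_ext}, adapting it to the spacetime slab
\[
\Omega := \{(t,x) : T_1 \le t \le T_2,\ |x| \ge \sqrt{t^2-s^2}\},
\]
whose boundary decomposes into the two flat hypersurfaces $\Sigma^s_{T_1}$, $\Sigma^s_{T_2}$ and the truncated hyperboloidal piece $\q H_s \cap \{T_1 \le t \le T_2\}$. I would start from the pointwise Morawetz identity \eqref{conformal} already used in the proofs of Propositions \ref{Prop:Conf_En} and \ref{prop:conf_ext}, integrate it over $\Omega$, and apply Stokes' theorem; the boundary terms will produce the two $\q E^{\text{c,s}}$ quantities and the hyperboloidal flux, while the bulk contribution is handled as a source term.

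On the flat slices $\Sigma^s_t$ for $t \in \{T_1, T_2\}$ the Stokes contribution equals the spatial integral of the $\partial_t$-density in \eqref{conformal}. Using the identity $K\Wf_0+2t\Wf_0 = t(\scal\Wf_0 + 2\Wf_0) + r\Omega_{0r}\Wf_0$ already invoked in the proof of Proposition \ref{prop:conf_ext}, together with the fact that the three boosts $\Omega_{0i}$ span the directions orthogonal to $\partial_r$, this density is equivalent (up to a lower-order $\Wf_0^2$ term controlled by Hardy's inequality) to the integrand of $\q E^{\text{c,s}}(t,\Wf_0)$. On the hyperboloidal portion, the Minkowski outward conormal to $\q H_s$ seen as an inner boundary of $\Omega$ is $\nu = (t,-x)/s$ and the induced volume form is $(s/t)\,dx$; substituting these into the flux from \eqref{conformal} and repeatedly using the defining relation $t^2-r^2 = s^2$ on $\q H_s$ should produce exactly
\[
\frac{1}{t^2}|K\Wf_0+2t\Wf_0|^2 + \frac{s^2}{t^2}\sum_{j=0}^{3}|\Omega_{0j}\Wf_0|^2
\]
integrated in $dx$, which is the same quantity as in the interior computation behind Proposition \ref{Prop:Conf_En}.

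Once the three boundary contributions are in place, the bulk term is handled by Cauchy--Schwarz,
\[
\int_{\Omega} (K\Wf_0 + 2t\Wf_0)\,\Ff_0\, dxdt \lesssim \int_{T_1}^{T_2}\|(t+r)\Ff_0\|_{L^2_x(\Sigma^s_t)}\,\q E^{\text{c,s}}(t,\Wf_0)^{1/2}\,dt,
\]
since $|K\Wf_0 + 2t\Wf_0| \lesssim (t+r)\bigl(|\scal\Wf_0+2\Wf_0|+ |\Omega_{0r}\Wf_0|\bigr)$, and a standard Gronwall argument then closes the estimate.

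The only real technical obstacle is the hyperboloidal flux computation: careful bookkeeping of the Minkowski conormal and induced measure on $\q H_s$ oriented as an \emph{inner} (rather than terminal) boundary of $\Omega$, followed by the algebraic reorganisation of the resulting quadratic form into the non-negative expression stated above. The rearrangement relies crucially on the identity $t^2-r^2=s^2$ along $\q H_s$, which is exactly what converts the various cross-terms into the clean $(s/t)^2$ weights multiplying the boosts; everything else in the argument follows the template already set up in Propositions \ref{Prop:Conf_En} and \ref{prop:conf_ext}.
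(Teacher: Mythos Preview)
Your approach is essentially the same as the paper's: integrate the Morawetz identity \eqref{conformal} over the region bounded by $\Sigma^s_{T_1}$, $\Sigma^s_{T_2}$ and $\q H_s\cap[T_1,T_2]$, and read off the boundary fluxes. Two minor remarks: the stated inequality is already the final claim, so no Gronwall step is required; and on the flat slices the $\Wf_0^2$ term in the density of \eqref{conformal} combines exactly with the $4r\Wf_0\partial_r\Wf_0$ contribution after an integration by parts in $x$, so you do not actually need to invoke Hardy there.
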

		\proof
		The result of the statement follows by integrating \eqref{conformal} over the region between $\Sigma^s_{T_2}$, $\Sigma_{T_1}^s$ and $\q H_s\cap [T_1, T_2]$, which can be foliated by the hypersurfaces $\Sigma^s_t$ for $t\in [T_1, T_2]$.
		\endproof}

	\section{Global existence in the exterior region}\label{Sec:Exterior}
	
	The main goal of this section is to prove the global existence of solutions $(u,v)$ to system \eqref{syteme}, or equivalently of solutions $W$ to \eqref{eq_W}, in the exterior region $\dext$ under the a-priori energy assumption \eqref{boot_energy}.
	This result will follow immediately from the proof of proposition \ref{prop:boot_ext}, which is organized in two steps. First we recover sharp pointwise bounds for the solution $W$ from the energy bounds \eqref{boot_energy} by means of weighted Sobolev inequalities.
	Then we compute the equation satisfied by the differentiated variable $\mathcal{Z}^\gamma W$, compare it to the inhomogeneous linear equation \eqref{linearized} and estimate perturbatively the source terms to finally propagate \eqref{boot_energy}.

	As a result of proving global energy bounds in the exterior region we also obtain bounds for the higher order conformal energy $\exec_{5}(t, W)$ for all $t\ge 2$ and a uniform-in-time control of the higher order energy on exterior hyperboloids $\enexh_5(s, W)$ for all $s\ge 2$.
	
	This section is organized as follows: in subsections \ref{Weighted_Sobolev} and \ref{Hardy} we prove some weighted Sobolev and Hardy inequalities; in subsection \ref{Pointwise_Bounds} we recover pointwise bounds for the solution from the a-priori energy estimate \eqref{boot_energy}; finally, subsection \ref{Propagation of the exterior energy bounds} is dedicated to the proof of  proposition \ref{prop: en_ineq_ext}.

	\subsection{Weighted Sobolev inequalities}\label{Weighted_Sobolev}

	\begin{lemma}\label{lemma_wsi}
		Let $\beta\in\mathbb{R}$. For any sufficiently smooth function $w$ we have
		\begin{multline}\label{sobolev}
			\sup_{\Sext_t}\, (2+r-t)^\beta r^2|w(t,x,y)|^2  \\
			\lesssim\iint_{\Sext_t} (2+r-t)^{\beta+1}(\partial_r \mathcal{Z}^{\le 2} w)^2
			+(2+r-t)^{\beta-1}(\mathcal{Z}^{\le 2} w)^2 \,	dxdy.
		\end{multline}
	\end{lemma}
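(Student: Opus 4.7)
\emph{Proof proposal.} The strategy is the standard one for exterior Klainerman--Sobolev estimates: combine a Sobolev embedding on the compact three-manifold $\mathbb{S}^2\times\mathbb{S}^1$ for the angular and periodic directions with a weighted fundamental theorem of calculus in the radial direction.

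First, I pass to polar coordinates $x=r\omega$, $\omega\in\mathbb{S}^2$, and fix a point $(r_0,\omega_0,y_0)\in\Sext_t$. For each fixed $r$, the Sobolev embedding $H^2(\mathbb{S}^2\times\mathbb{S}^1)\hookrightarrow L^\infty$ applied to $(\omega,y)\mapsto w(t,r\omega,y)$ yields
$$|w(t,r\omega_0,y_0)|^2 \lesssim \int_{\mathbb{S}^2\times\mathbb{S}^1}|\mathcal{Z}^{\le 2}w(t,r\omega,y)|^2\,d\omega dy,$$
since the rotations $\Omega_{ij}\in\mathcal Z$ span the tangent space of each sphere $\{|x|=r\}$ (and commute with $\partial_r$), while $\partial_y\in\mathcal Z$ handles the periodic direction. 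Multiplying by $r_0^2$, the task reduces to bounding $(2+r_0-t)^\beta G(r_0)$ where $G(r):=\int_{\mathbb{S}^2\times\mathbb{S}^1}r^2|\mathcal{Z}^{\le 2}w|^2\,d\omega dy$.

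Next, working by density so as to have sufficient decay at infinity, I apply the fundamental theorem of calculus in $r$:
$$(2+r_0-t)^\beta G(r_0) = -\int_{r_0}^\infty \partial_r\bigl[(2+r-t)^\beta G(r)\bigr]\,dr.$$
Differentiating and expanding $G'(r)$, the integrand splits into three types of contributions: a weight term $\beta(2+r-t)^{\beta-1}G(r)$; a lower order term $\int 2r(2+r-t)^\beta|\mathcal{Z}^{\le 2}w|^2\,d\omega dy$ coming from $\partial_r r^2$; and the cross term $\int 2r^2(2+r-t)^\beta\,\mathcal{Z}^{\le 2}w\cdot\partial_r\mathcal{Z}^{\le 2}w\,d\omega dy$. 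On the cross term I apply AM--GM with the splitting $(2+r-t)^\beta=(2+r-t)^{(\beta+1)/2}(2+r-t)^{(\beta-1)/2}$, which produces precisely the two target weights $(2+r-t)^{\beta+1}(\partial_r\mathcal{Z}^{\le 2}w)^2$ and $(2+r-t)^{\beta-1}(\mathcal{Z}^{\le 2}w)^2$. Reconstituting the measure $r^2\,dr\,d\omega=dx$ and noting $\{r\ge r_0\}\subset\Sext_t$ gives exactly the RHS of the lemma.

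The main bookkeeping point, which I expect to be the only subtle step, is to absorb the two ``extra'' contributions $|\beta|(2+r-t)^{\beta-1}G(r)$ and $2r(2+r-t)^\beta\int|\mathcal{Z}^{\le 2}w|^2d\omega dy$ into the single term $(2+r-t)^{\beta-1}r^2(\mathcal{Z}^{\le 2}w)^2$ appearing in the RHS. This rests on the elementary observation that in $\Sext_t$ with $t\ge 2$ one has $2+r-t\le r$, hence $(2+r-t)^\beta\le r\,(2+r-t)^{\beta-1}$, which in particular gives $2r(2+r-t)^\beta\le 2r^2(2+r-t)^{\beta-1}$. This is the only place where the choice of initial time $t_0=2$ enters the argument, and it is precisely what makes the powers of $r$ and $(2+r-t)$ balance on both sides of the final bound. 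Once these elementary weight manipulations are carried out and the supremum over $(r_0,\omega_0,y_0)\in\Sext_t$ is taken, \eqref{sobolev} follows.
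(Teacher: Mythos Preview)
Your proposal is correct and follows essentially the same route as the paper: Sobolev embedding on $\mathbb{S}^2\times\mathbb{S}^1$, then the weighted fundamental theorem of calculus in $r$ with an AM--GM split of the cross term. The only cosmetic differences are that the paper drops the $2r(2+r-t)^\beta|\mathcal Z^{\le 2}w|^2$ contribution by sign (it enters with a favorable sign in $-\int_r^\infty\partial_\rho[\dots]$, so no use of $2+r-t\le r$ is needed), and it replaces your density remark by an explicit cutoff $\chi(\ep r)w$ argument to justify the boundary term at infinity.
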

	\begin{proof} Let $(r,\sigma)$ be the spherical coordinates in $\m R^3$, $r=|x|$ and $\sigma = x/|x|\in \m S^2$.
		We begin by observing that  the Sobolev embedding $H^2(\m S^{2}\times \m S^1) \subset L^\infty(\m S^{2}\times \m S^1)$ implies 
		\[\sup_{ \m S^{2}\times \m S^1}|w(t,r,\sigma,y)|^2 \leq \sum_{0\le l+k\leq 2} \int|\nabla_\sigma^l \partial_y^k w(t,r,\sigma,y)|^2d\sigma dy.
		\]
		We then remark that for any function $v$ and $(t,x,y)\in \Sext_t$
		\[
		\begin{aligned}
			& \partial_r \Big[ (2+r-t)^\beta r^2\, v(t,x,y)^2 \Big] \\
			&\hspace{10pt} = 2 (2+r-t)^\beta r^2\,  v\partial_r v     +  \beta(2+r-t)^{\beta-1}r^2 v^2 + 2(2+r-t)^\beta r v^2\\
			& \hspace{10pt}\ge 2 (2+r-t)^\beta r^2 v\partial_r v     +  \beta(2+r-t)^{\beta-1}r^2 v^2 ,
		\end{aligned}
		\]
		so if $v$ is compactly supported in $x$ we can write
		\begin{equation}\label{ineq_ks}
			\begin{aligned}
				&(2+r-t)^\beta r^2\, v(t,x,y)^2 = -\int_r^\infty \partial_\rho \big[(2+\rho-t)^\beta \rho^2\, v(t,x,y)^2\big]d\rho\\
				& \lesssim_\beta \int_r^{\infty}   (2+\rho-t)^\beta |v\partial_\rho v| \rho^2d\rho + \int_r^\infty (2+\rho-t)^{\beta-1}v^2 \rho^2d\rho \\
				&	\lesssim_\beta \int_{r}^\infty  (2+\rho-t)^{\beta+1} (\partial_\rho v)^2\, \rho^2 d\rho + \int_r^\infty (2+\rho-t)^{\beta-1}v^2 \rho^2d\rho .
			\end{aligned}
		\end{equation}
		By	 replacing $v$ with $\nabla_\sigma^l \partial_y^k w(t,r,\sigma,y)$ for $k+l\le 2$ we obtain \eqref{sobolev} in the case where $w$ is compactly supported. 
		In the general case where $w$ is not compactly supported we consider a cut-off function $\chi\in C^\infty_0(\mathbb{R})$ and apply the inequality \eqref{sobolev} to $\chi(\ep r )w$ for any $\ep>0$
		\[
		\begin{aligned}
			&\sup_{\Sext_t}\, (2+r-t)^\beta r^2|\chi(\ep r)w|^2 \\
			&\hspace{20pt} \lesssim \sum_{k+l\leq 2} \iint_{\Sext_t} (2+r-t)^{\beta+1} (\partial_r \nabla_\sigma^k \partial_y^l w)^2
			+(2+r-t)^{\beta-1}(\nabla_\sigma^k \partial_y^l w)^2 \,	dxdy \\
			& \hspace{20pt} +  \sum_{k+l\leq 2} \iint_{\Sext_t} (2+r-t)^{\beta+1} \ep^2|\chi'(\ep r)|^2(\nabla_\sigma^k \partial_y^l w)^2 \, dxdy.
		\end{aligned}
		\]
		On the intersection of $\Sext_t$ with the support of $\chi'(\ep r)$ we have that $(2+r-t)^2 \ep^2 \lesssim 1$ so
		\[
		\iint_{\Sext_t} (2+r-t)^{\beta+1} \ep^2|\chi'(\ep r)|^2(\nabla_\sigma^k \partial_y^l w)^2 \, dxdy \lesssim \iint_{\Sext_t} (2+r-t)^{\beta-1}(\nabla_\sigma^k \partial_y^l w)^2 \,	dxdy.
		\]
		By	 letting $\epsilon\rightarrow0$ we derive \eqref{sobolev} also in the case of non compactly supported $w$.	
	\end{proof}
	
	Slight modifications of the above proof yield the following three results.
	
	\begin{lemma}\label{lemma_wsi2}
		Let $\beta \in \m R$. For a sufficiently regular function $w$ we have
		\begin{equation}\label{sobolev_bis}
			\sup_{\Sext_t}\, (2+r-t)^\beta r^2|w(t,x,y)|^2 \lesssim  \iint_{\Sext_t} (2+r-t)^{\beta}\Big((\partial_r \mathcal{Z}^{\le2}  w)^2
			+(\mathcal{Z}^{\le2} w)^2\Big)dxdy.
		\end{equation}
	\end{lemma}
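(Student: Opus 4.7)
The plan is to follow the proof of Lemma \ref{lemma_wsi} closely, but to exploit the defining feature of the exterior region $\Sext_t$, namely that $r \geq t-1$, or equivalently $2+r-t \geq 1$. This single inequality will let us bring the weight $(2+r-t)^{\beta-1}$ up to $(2+r-t)^\beta$ and absorb the zeroth-order contribution into the term with weight $(2+r-t)^\beta$.

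First I would reduce to a one-dimensional problem by applying the Sobolev embedding $H^2(\m S^2\times \m S^1)\subset L^\infty(\m S^2\times \m S^1)$, so that it suffices to bound $(2+r-t)^\beta r^2 |v(t,r,\sigma,y)|^2$ by the right-hand side, where $v$ ranges over $\nabla_\sigma^l\partial_y^k w$ with $l+k\le 2$. Assuming temporarily that $w$ is compactly supported in $x$, I would compute $\partial_r\bigl[(2+r-t)^\beta r^2 v^2\bigr]$ and integrate from $r$ to $\infty$, exactly as in the proof of Lemma \ref{lemma_wsi}, to obtain
\[
(2+r-t)^\beta r^2 v^2 \lesssim_\beta \int_r^\infty (2+\rho-t)^\beta |v\,\partial_\rho v|\,\rho^2\, d\rho + \int_r^\infty (2+\rho-t)^{\beta-1} v^2\, \rho^2\, d\rho.
\]

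The key departure from Lemma \ref{lemma_wsi} appears at this point. Rather than applying Cauchy–Schwarz with weights $(2+\rho-t)^{\beta+1}$ and $(2+\rho-t)^{\beta-1}$ on the first integral, I would use Young's inequality $2|v\,\partial_\rho v|\le v^2+(\partial_\rho v)^2$ with the same weight $(2+\rho-t)^\beta$ on both factors. For the second integral I would use that $2+\rho-t\ge 1$ on $\Sext_t$, so $(2+\rho-t)^{\beta-1}\le (2+\rho-t)^\beta$. Combining both bounds yields
\[
(2+r-t)^\beta r^2 v^2 \lesssim_\beta \int_r^\infty (2+\rho-t)^\beta \bigl[(\partial_\rho v)^2 + v^2\bigr]\, \rho^2\, d\rho,
\]
and integrating in $(\sigma,y)\in\m S^2\times\m S^1$, converting back to Cartesian coordinates, and summing over $l+k\le 2$ gives \eqref{sobolev_bis} in the compactly supported case.

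Finally, to remove the compact support assumption I would reproduce verbatim the cut-off argument of Lemma \ref{lemma_wsi}: apply the inequality just obtained to $\chi(\epsilon r)w$ for $\chi\in C^\infty_0(\m R)$, observe that on the support of $\chi'(\epsilon r)\cap \Sext_t$ one still has $(2+r-t)\epsilon \lesssim 1$ so that the extra commutator terms are controlled by $\iint_{\Sext_t}(2+r-t)^\beta (\q Z^{\le 2}w)^2\,dxdy$ (using again $2+r-t\ge 1$ to upgrade $(2+r-t)^{\beta-2}$ to $(2+r-t)^\beta$), and let $\epsilon\to 0$. There is no real obstacle here; the only point requiring care is to make sure that the substitution $(2+r-t)^{\beta-1}\leq (2+r-t)^\beta$ is allowed, which is true precisely because we are working in the exterior region where $2+r-t\ge 1$ regardless of the sign of $\beta$.
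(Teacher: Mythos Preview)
Your proposal is correct and follows exactly the approach indicated in the paper: modify the Cauchy--Schwarz step in \eqref{ineq_ks} to use the same weight $(2+\rho-t)^\beta$ on both $v$ and $\partial_\rho v$, and invoke $2+r-t\ge 1$ on $\Sext_t$ to absorb the lower-order weight. The cut-off argument you spell out is the natural extension of the one in Lemma~\ref{lemma_wsi} and is handled correctly.
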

	\begin{proof}
		It follows by estimating $v$ and $\partial_\rho v$ in \eqref{ineq_ks} with the same weight and using the fact that $2+r-t\ge 1$ on $\Sext_t$.
	\end{proof}
	
	\begin{lemma}\label{lemma_L2L2}
		Let $\beta\in\mathbb{R}$. For any sufficiently regular function $w$ we have
		\begin{equation}\label{wks2}
			\begin{split}
				& \sup_{\Sext_t}\, (2+r-t)^\beta r^2\|w(t,r,\cdot)\|^2_{L^2(\m S^2 \times \m S^1)} \lesssim\iint_{\Sext_t} (2+r-t)^{\beta+1}(\partial_r w)^2
				+(2+r-t)^{\beta-1} w^2 \,	dxdy.
			\end{split}
		\end{equation}
	\end{lemma}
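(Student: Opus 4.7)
\textbf{Proof plan for Lemma \ref{lemma_L2L2}.} The plan is to run the same radial-integration argument used in Lemma \ref{lemma_wsi}, but applied directly to the scalar quantity
\[
g(r) := \|w(t,r,\cdot)\|^2_{L^2(\mathbb{S}^2\times\mathbb{S}^1)} = \int_{\mathbb{S}^2\times\mathbb{S}^1} w(t,r,\sigma,y)^2\, d\sigma\, dy
\]
rather than to a pointwise value of $w$. The key simplification compared to Lemma \ref{lemma_wsi} is that no Sobolev embedding on $\mathbb{S}^2\times\mathbb{S}^1$ is required, which is exactly why the right-hand side of \eqref{wks2} involves only $w$ and $\partial_r w$ instead of $\mathcal{Z}^{\le 2}w$.

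Assuming first that $w$ is compactly supported in the radial variable, I would apply the fundamental theorem of calculus to get
\[
(2+r-t)^\beta r^2\, g(r) = -\int_r^\infty \partial_\rho\bigl[(2+\rho-t)^\beta \rho^2 g(\rho)\bigr]\, d\rho.
\]
Expanding the $\rho$-derivative and using $\partial_\rho g(\rho) = 2\int_{\mathbb{S}^2\times\mathbb{S}^1} w\,\partial_\rho w\, d\sigma dy$, the integrand splits into two types of contributions. The terms $\beta(2+\rho-t)^{\beta-1}\rho^2 g(\rho)$ and $2(2+\rho-t)^\beta\rho\, g(\rho)$ are already of the desired form after recalling that $2+\rho-t\ge 1$ on $\Sext_t$, so they are directly absorbed into $\iint_{\Sext_t}(2+r-t)^{\beta-1} w^2\, dxdy$. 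The remaining cross term $2(2+\rho-t)^\beta \rho^2 \int w\,\partial_\rho w\, d\sigma dy$ is estimated by Cauchy--Schwarz in $(\rho,\sigma,y)$, splitting the weight as $(2+\rho-t)^\beta = (2+\rho-t)^{(\beta+1)/2}\cdot(2+\rho-t)^{(\beta-1)/2}$ and applying the elementary inequality $2|ab|\le a^2+b^2$. This produces precisely the two integrals on the right-hand side of \eqref{wks2}. Taking the supremum over $r\ge t-1$ yields the claimed bound.

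To remove the compact support assumption, I would follow the approximation argument already used at the end of the proof of Lemma \ref{lemma_wsi}: apply the bound just obtained to $\chi(\epsilon r)\,w$ for a cutoff $\chi\in C^\infty_c(\mathbb{R})$ with $\chi\equiv 1$ near the origin, and then send $\epsilon\to 0$. The only extra contribution comes from $\epsilon\,\chi'(\epsilon r)\,w$, which is supported where $(2+r-t)\epsilon\lesssim 1$, so the extra integrand is pointwise dominated by $(2+r-t)^{\beta-1}w^2$ and thus absorbed in the right-hand side.

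The proof is essentially routine; the only mild care point is tracking the $\beta$-dependent signs and constants coming from the derivative of the weight, which is handled exactly as in the proof of Lemma \ref{lemma_wsi} by passing to absolute values before invoking Cauchy--Schwarz. I do not anticipate any genuine obstacle.
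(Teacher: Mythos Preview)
Your proposal is correct and follows essentially the same approach as the paper. The paper's proof is even terser: it simply substitutes $v=\|w(t,r,\cdot)\|_{L^2(\mathbb{S}^2\times\mathbb{S}^1)}$ into the already-derived inequality \eqref{ineq_ks} (using $|\partial_\rho \|w\|_{L^2}|\le \|\partial_\rho w\|_{L^2}$), whereas you re-run the fundamental theorem of calculus computation on $g(r)=\|w\|_{L^2}^2$; these are the same argument written two ways.
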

	\proof
	The inequality \eqref{wks2} follows by replacing $v$ with the $L^2(\m S^2\times\m S^1)$ norm of $w$ in both left and right hand sides of inequality \eqref{ineq_ks}. 
	\endproof
	
	\begin{lemma}
		Let $\beta\in\mathbb{R}$. For any sufficiently regular function $w$ we have
		\begin{equation}\label{wks4_bis}
			\sup_{\Sext_t}\,  (2+r-t)^\beta r^2 \left\| w(t,r,\cdot)\right\|^2_{L^4(\m S^2 \times \m S^1)} \lesssim\iint_{\Sext_t} (2+r-t)^{\beta}(\partial^{\le 1}_r \mathcal{Z}^{\le 1} w)^2\,	dxdy.
		\end{equation}
	\end{lemma}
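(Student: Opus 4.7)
The plan is to combine a Sobolev embedding on the compact $3$-dimensional manifold $\m S^2\times \m S^1$ with the same-weight radial integration trick already used in the proof of Lemma \ref{lemma_wsi2}. The overall logic mirrors that of Lemma \ref{lemma_L2L2}, with $L^2(\m S^2\times \m S^1)$ replaced by $L^4(\m S^2\times \m S^1)$ at the cost of one extra angular/periodic derivative.

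First I would use that $\m S^2\times \m S^1$ is a closed $3$-manifold, so the Sobolev embedding $H^1\hookrightarrow L^4$ holds. This gives, for each fixed $(t,r)$,
$$\|w(t,r,\cdot)\|_{L^4(\m S^2\times \m S^1)}^2 \lesssim \sum_{l+j\le 1}\|\nabla_\sigma^{l}\partial_y^{j}w(t,r,\cdot)\|_{L^2(\m S^2\times \m S^1)}^2.$$
Since each component of $\nabla_\sigma$ on $\m S^2$ is a bounded linear combination of the Euclidean rotations $\Omega_{ij}\in\mathcal{Z}$, and $\partial_y\in\mathcal{Z}$, the right hand side is controlled by $\|\mathcal{Z}^{\le 1}w(t,r,\cdot)\|_{L^2(\m S^2\times \m S^1)}^2$.

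Next I would apply the radial integration identity from the proof of Lemma \ref{lemma_wsi2}, but now with the scalar quantity $v(\rho)^2 := \|\mathcal{Z}^{\le 1}w(t,\rho,\cdot)\|_{L^2(\m S^2\times \m S^1)}^2$ in place of a pointwise square. Starting from
$$(2+r-t)^\beta r^2 v(r)^2 = -\int_r^\infty \partial_\rho\bigl[(2+\rho-t)^\beta \rho^2 v(\rho)^2\bigr]\,d\rho,$$
using $|v\partial_\rho v|\le \tfrac12(v^2+(\partial_\rho v)^2)$ and the fact that $(2+\rho-t)\ge 1$ on $\Sext_t$ (which lets us absorb any $(2+\rho-t)^{\beta-1}$ term into a $(2+\rho-t)^\beta$ one, exactly as in Lemma \ref{lemma_wsi2}), one obtains
$$(2+r-t)^\beta r^2 \|\mathcal{Z}^{\le 1}w(t,r,\cdot)\|_{L^2(\m S^2\times \m S^1)}^2 \lesssim \iint_{\Sext_t}(2+r-t)^\beta\bigl[(\partial_r\mathcal{Z}^{\le 1}w)^2+(\mathcal{Z}^{\le 1}w)^2\bigr]\,dxdy.$$
Taking the supremum in $r\ge t-1$ and combining with the previous step yields \eqref{wks4_bis}.

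The non-compactly-supported case is handled exactly as in Lemma \ref{lemma_wsi}, by multiplying $w$ by a cut-off $\chi(\ep r)$, observing that on $\Sext_t\cap \mathrm{supp}\,\chi'(\ep r)$ one has $\ep(2+r-t)\lesssim 1$, and letting $\ep\to 0$. The only small point to check is that $\partial_r$ commutes with $\mathcal{Z}^{\le 1}$ without generating new terms: this is immediate since $\partial_r$ commutes with $\partial_y$ and since the rotations $\Omega_{ij}$ act tangentially to the spheres centered at the origin. I do not anticipate any serious obstacle, as each of the three ingredients (the $L^4$ Sobolev embedding on the compact $3$-manifold $\m S^2\times \m S^1$, the same-weight radial identity, and the cut-off approximation argument) has already been used in the preceding lemmas of this subsection.
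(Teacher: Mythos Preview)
Your proposal is correct and follows essentially the same approach as the paper: both combine the same-weight radial integration from Lemma \ref{lemma_wsi2} with the Sobolev embedding $H^1(\m S^2\times\m S^1)\subset L^4(\m S^2\times\m S^1)$, together with the cut-off argument for non-compactly-supported $w$. The only cosmetic difference is the order of operations: the paper applies the radial identity \eqref{ineq_ks} directly to $v=\|w(t,\rho,\cdot)\|_{L^4(\m S^2\times\m S^1)}$ and then invokes the Sobolev embedding, whereas you apply the embedding first and then the radial identity to the $L^2$ norm of $\mathcal{Z}^{\le 1}w$; the two orderings are equivalent.
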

	\proof
	The inequality \eqref{wks4_bis} follows by estimating $v$ and $\partial_\rho v$ in \eqref{ineq_ks} with the same weight, then applying the inequality with $v$ replaced by the $L^4(\m S^2\times\m S^1)$ norm of $w$ and finally using the Sobolev injection $H^1(\m S^2\times \m S^1)\subset L^4(\m S^2\times \m S^1)$. 
	\endproof

	\subsection{Weighted Hardy inequalities}\label{Hardy}

	\begin{lemma}\label{lemma_har}
		Let $\beta>-1$. For any compactly supported function $w$ we have
		\[
		\iint_{\Sext_t} (2+r-t )^\beta w^2 dxdy \lesssim \iint_{\Sext_t}(2+r-t)^{\beta+2} (\partial_r w)^2 dxdy.
		\]
	\end{lemma}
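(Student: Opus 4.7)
The plan is to prove the inequality by integration by parts in the radial direction, exploiting the fact that on $\Sext_t$ the inner boundary $\{r=t-1\}$ is precisely where the weight $(2+r-t)$ equals $1$, which will produce a boundary term of favorable sign.

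First I would pass to spherical coordinates $dx = r^2\, dr\, d\sigma$, fix $y\in\m S^1$ and $\sigma\in\m S^2$, and reduce to the one-dimensional identity
\[
\partial_r\!\bigl[(2+r-t)^{\beta+1}\bigr] = (\beta+1)(2+r-t)^{\beta},
\]
which requires $\beta+1\ne 0$. Multiplying by $w^2 r^2$ and integrating from $r=t-1$ to $r=+\infty$, I would integrate by parts. The boundary at $r=+\infty$ vanishes by the compact support assumption on $w$, while the inner boundary contributes
\[
-\,(2+r-t)^{\beta+1}w^2 r^2\Big|_{r=t-1} \;=\; -\,w(t,t-1,\sigma,y)^2\,(t-1)^2 \;\le\; 0,
\]
so this term can be dropped. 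This yields
\[
(\beta+1)\!\int_{t-1}^{\infty}\!(2+r-t)^{\beta}w^2 r^2\, dr \;\le\; -\!\int_{t-1}^{\infty}\!(2+r-t)^{\beta+1}\,\partial_r(w^2 r^2)\, dr.
\]

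Second, I would expand $\partial_r(w^2 r^2) = 2 w\partial_r w\cdot r^2 + 2 w^2 r$. The second summand gives a nonnegative contribution on the right-hand side after the minus sign flips it, so it may be discarded. For the first summand I would apply Cauchy--Schwarz, splitting the weight as $(2+r-t)^{\beta+1} = (2+r-t)^{\beta/2}\cdot (2+r-t)^{(\beta+2)/2}$. Writing
\[
A^2 := \!\iint_{\Sext_t}\!(2+r-t)^{\beta} w^2\, dxdy, \qquad B^2 := \!\iint_{\Sext_t}\!(2+r-t)^{\beta+2}(\partial_r w)^2\, dxdy,
\]
and integrating over $\sigma\in\m S^2$ and $y\in\m S^1$, the estimate becomes $(\beta+1) A^2 \le 2 A B$, which gives $A \le \tfrac{2}{\beta+1} B$ and hence the claim.

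The only subtle point is the sign of the boundary contribution at $r=t-1$; everything else is a routine manipulation. It is worth noting that the hypothesis $\beta>-1$ is used in two essential places: to invert $(\beta+1)$ in the integration by parts, and to ensure the inner boundary term has the correct sign. The assumption that $w$ be compactly supported is only needed to kill the boundary term at infinity and can be relaxed by a standard cutoff argument in $\epsilon r$, analogous to the one used in Lemma~\ref{lemma_wsi}.
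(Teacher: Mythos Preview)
Your proof is correct and follows the same approach as the paper: radial integration by parts, dropping the favorable boundary term at $r=t-1$ and the $2w^2 r$ contribution, then Cauchy--Schwarz. Two minor slips in the commentary (not the argument itself): the $2w^2 r$ term gives a \emph{nonpositive} contribution to the right-hand side, which is why it may be discarded from an upper bound; and the inner boundary term $-w(t-1,\sigma,y)^2(t-1)^2\le 0$ regardless of $\beta$, so the hypothesis $\beta>-1$ is used only to divide by $\beta+1$ with the correct sign.
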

	\begin{proof}
		A simple computation shows that for any $\beta\in\mathbb{R}$ and $(t,x,y)\in \Sext_t$
		$$\partial_r \left[ r^2 (2+r-t)^{\beta+1} \right] =2r(2+r-t)^{\beta+1}+(\beta+1)r^2(2+r-t)^\beta
		\geq (\beta+1)r^2(2+r-t)^\beta.$$
		Consequently, if $\beta>-1$ and $w$ is a compactly supported function 
		\begin{align*}
			&\int_{r\geq t-1} (2+r-t)^\beta w^2 dx = \int_{t-1}^\infty\int_{\m S^2} (2+r-t)^\beta r^2   w^2 drd\sigma \\
			& \hspace{10pt} \leq \frac{1}{\beta+1}\int_{t-1}^\infty\int_{\m S^2}   \partial_r (r^2 (2+r-t)^{\beta+1} ) w^2 drd\sigma \\
			& \hspace{10pt}	= -\frac{2}{\beta+1}\int_{r \geq t-1}   (2+r-t)^{\beta+1}w\partial_r w \ r^2  drd\sigma -\frac{1}{\beta+1} \int_{\m S^{2}\times \m S^1} w^2 r^2 d\sigma\Big|_{r=t-1} \\
			& \hspace{10pt}	\lesssim  \left(\int_{r \geq t-1}   (2+r-t)^{\beta}w^2 dx\right)^\frac{1}{2} \left(\int_{r \geq t-1}   (2+r-t)^{\beta+2}(\partial_r w)^2 dx\right)^\frac{1}{2}.
		\end{align*}
		The result of the lemma follows then after further integration on $\m S^1$.
	\end{proof}
	
	\begin{corollary}\label{corhar}
		Let $\beta>-1$ and $Z$ be any Klainerman vector field. For a sufficiently regular function $w$ we have
		\[
		\iint_{\Sext_t} (2+r-t )^\beta (Zw)^2 dxdy \lesssim \iint_{\Sext_t}(2+r-t)^{\beta+2}(\partial Z^{\le 1} w)^2 dxdy.
		\]
	\end{corollary}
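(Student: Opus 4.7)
The plan is to deduce this corollary from Lemma \ref{lemma_har} applied to the function $Zw$ in place of $w$, combined with an elementary commutator computation between $\partial_r$ and the Klainerman vector fields. The hypothesis $\beta>-1$ is precisely what is needed for the underlying Hardy inequality, and the only real point is to control $\partial_r(Zw)$ by $|\partial Z^{\le 1}w|$.

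First I would reduce to the case of compactly supported $w$ via the smooth cutoff $\chi(\epsilon r)$ already used in the proof of Lemma \ref{lemma_wsi}: on the support of $\chi'(\epsilon r)$ one has $(2+r-t)\epsilon\lesssim 1$, so the correction terms produced by the cutoff are absorbed after letting $\epsilon\to 0$. Once $Zw$ is compactly supported, Lemma \ref{lemma_har} directly yields
\[
\iint_{\Sext_t} (2+r-t)^\beta (Zw)^2 \, dxdy \lesssim \iint_{\Sext_t} (2+r-t)^{\beta+2} (\partial_r Zw)^2 \, dxdy.
\]

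Next, it remains to bound $\partial_r(Zw)$ pointwise by $|\partial Z^{\le 1}w|$. Writing $\partial_r=\sum_k (x_k/r)\partial_k$ and using that the commutators $[\partial_k,\Omega_{ij}]$ and $[\partial_k,\Omega_{0i}]$ are linear combinations of the flat derivatives $\{\partial_t,\partial_1,\partial_2,\partial_3\}$ with constant coefficients, one obtains
\[
\partial_r Zw = \sum_{k=1}^3 \frac{x_k}{r}\bigl(Z\partial_k w+[\partial_k,Z]w\bigr).
\]
Since $|x_k/r|\le 1$, this gives $|\partial_r Zw|\lesssim |\partial Zw|+|\partial w|\lesssim |\partial Z^{\le 1}w|$, and plugging this bound into the previous inequality concludes the proof.

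The argument is essentially bookkeeping and there is no genuine analytical obstacle; the content of the corollary is the observation that the commutator $[\partial_r,Z]$ produces terms of the same type as those already present on the right-hand side, so the Hardy inequality of Lemma \ref{lemma_har} passes directly to $Zw$.
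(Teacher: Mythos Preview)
There is a gap in your reduction to compactly supported $w$. When you replace $w$ by $w_\epsilon=\chi(\epsilon r)w$ and apply the compactly supported case, the right-hand side acquires correction terms involving $w$ itself without derivatives: for a boost $\Omega_{0i}$ one has $Zw_\epsilon=\chi Zw+t(x_i/r)\epsilon\chi' w$, and $\partial Z^{\le1}w_\epsilon$ likewise contains pieces of size $\epsilon|\chi'|\,|w|$. The bound $(2+r-t)\epsilon\lesssim 1$ on $\mathrm{supp}\,\chi'$ only reduces the resulting contribution to $\iint_{\{r\sim 1/\epsilon\}}(2+r-t)^\beta w^2\,dxdy$, and nothing on the right-hand side of the corollary absorbs this. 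The analogous step in Lemma~\ref{lemma_wsi} works precisely because that lemma already carries a lower-order term $(2+r-t)^{\beta-1}w^2$ on its right-hand side; here there is no such term. To make the correction vanish as $\epsilon\to0$ you would need to know a priori that $\iint_{\Sext_t}(2+r-t)^\beta w^2<\infty$, which is not assumed.

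The paper handles the cutoff differently: it applies Lemma~\ref{lemma_har} directly to $\chi(\epsilon r)Zw$ rather than to $Z(\chi(\epsilon r)w)$. The single correction term is then $(2+r-t)^{\beta+2}\epsilon^2|\chi'|^2(Zw)^2$, and the key observation---which your argument never invokes---is that on $\Sext_t$ the coefficients of every Klainerman vector field are $O(r)$, so $|Zw|\lesssim r|\partial w|$. Combined with the boundedness of $r\epsilon$ on $\mathrm{supp}\,\chi'$, this bounds the correction by $\iint_{\Sext_t}(2+r-t)^{\beta+2}(\partial w)^2$ uniformly in $\epsilon$, which is already part of the desired right-hand side. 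Incidentally, your commutator computation for $\partial_r(Zw)$ is correct but unnecessary: $\partial_r(Zw)=\sum_k(x_k/r)\partial_k(Zw)$ is directly a combination of terms of the form $\partial(Zw)$, so $|\partial_r Zw|\le|\partial Zw|$ without any commutation.
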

	\begin{proof}
		For any $\ep >0$ and any fixed cut-off function $\chi$ we apply lemma \ref{lemma_har} to $\chi(\ep r)Zw$ and use the fact that $|Zw|\lesssim r|\partial u|$ on $\Sext_t$. Since $r\ep$ is bounded on the support of $\chi(\ep r)$ we have 
		\begin{align*}
			&\iint_{\Sext_t} (2+r-t )^\beta (\chi(\ep r)Zw)^2 dxdy \\
			& \lesssim \iint_{\Sext_t}(2+r-t)^{\beta+2} |\chi(\ep r)|^2(\partial_r Zw)^2\, dxdy
			+\iint_{\Sext_t}(2+r-t)^{\beta+2} \ep^2 |\chi'(\ep r)|^2(Zw)^2\, dxdy\\
			& \lesssim  \iint_{\Sext_t}(2+r-t)^{\beta+2}(\partial_r Zw)^2 dxdy
			+\iint_{\Sext_t}(2+r-t)^{\beta+2} (\partial w)^2 dxdy
		\end{align*}
		and obtain the result after letting $\ep \to 0$.
	\end{proof}

	\subsection{Pointwise Bounds} \label{Pointwise_Bounds}
	Using the lemmas introduced in the previous sections we can recover sharp pointwise bounds for the solution $W$ from the a-priori energy bounds \eqref{boot_energy}.
	
	\begin{proposition}\label{prop:ext_pointbound}
		Assume that the solution $W=(u,v)^T$ to \eqref{eq_W} satisfies the a-priori energy bounds \eqref{boot_energy} for some fixed time $T_0$ and some fixed $\alpha>0$. There exists $l \in L^1([2, T_0])$ such that we have the following pointwise estimates in $\dext_{T_0}$
		\begin{align}
			\label{wext}& \sup_{\m S^1} |W|\lesssim \ep \\
			\label{dwext}&
			\sup_{\m S^1}|\partial\mathcal{Z}^{\le 2} W|\lesssim C_0\ep r^{-1}(2+r-t)^{-\frac{\alpha+1}{2}} \\
			\label{dbarwext}&\sup_{\m S^1} |\mathcal{T}\mathcal{Z}^{\le 2} W| + |\partial_y \mathcal{Z}^{\le 2} W| + |\mathcal{Z}^{\le 2}\Wt| \lesssim C_0 \ep r^{-1} \sqrt{l(t)} (2+r-t)^{-\frac{\alpha}{2}} \\
			\label{zwext}&\sup_{\m S^1} |Z\mathcal{Z}^{\le 2} W|\lesssim C_0 \ep r^{-1}(2+r-t)^{-\frac{\alpha}{2}}.
		\end{align}
	\end{proposition}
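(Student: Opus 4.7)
All four pointwise bounds follow from applying the weighted Sobolev inequalities of lemmas \ref{lemma_wsi}--\ref{lemma_L2L2} and the Hardy-type corollary \ref{corhar} to the a-priori energy bound \eqref{boot_energy}, which provides both $\sup_{t\in[2, T_0]}\enext_5(t, W)\le 2C_0^2\epsilon^2$ and the existence of $l\in L^1([2, T_0])$ satisfying \eqref{l(t)}.

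I would first establish the ordinary derivative bound \eqref{dwext} by applying lemma \ref{lemma_wsi2} with $\beta = \alpha+1$ and $w = \partial\mathcal{Z}^{\le 2}W$: since every commutator $[\mathcal{Z},\partial]$ produces again admissible vector fields, the right hand side reduces to $\iint_{\Sext_t}(2+r-t)^{\alpha+1}|\partial\mathcal{Z}^{\le 5}W|^2\,dxdy \lesssim \enext_5(t,W) \lesssim C_0^2\epsilon^2$. Next, for the $Z$-derivative bound \eqref{zwext}, I apply lemma \ref{lemma_wsi} with $\beta=\alpha$ and $w = Z\mathcal{Z}^{\le 2}W$; the top-weight term is again controlled by $\enext_5(t,W)$, and the low-weight piece $\iint(2+r-t)^{\alpha-1}(\mathcal{Z}^{\le 2}Z\mathcal{Z}^{\le 2}W)^2\,dxdy$ is handled by corollary \ref{corhar} (applicable because $\alpha>0$ and at least one $Z$ factor is present), which gains two powers of the weight and produces a quantity dominated once more by $\enext_5(t,W)$.

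For the good-derivative bound \eqref{dbarwext}, I proceed analogously but extract the factor $\sqrt{l(t)}$ from \eqref{l(t)}. Applying lemma \ref{lemma_wsi2} with $\beta=\alpha$ and $w=\partial_y\mathcal{Z}^{\le 2}W$ reduces the right hand side to $\iint(2+r-t)^\alpha|\partial_y\mathcal{Z}^{\le 5}W|^2\,dxdy \lesssim C_0^2\epsilon^2 l(t)$ immediately, since $[\partial_y,\mathcal{Z}]=0$. The analogous argument for $w = \mathcal{T}\mathcal{Z}^{\le 2}W$ needs the commutation relations $[\mathcal{Z},\mathcal{T}]$, which express the commutator as a linear combination of the tangential fields $\mathcal{T}$ and $\partial_y$ plus $O(1/r)$ coefficients multiplying flat derivatives (harmless on $\dext$, where $r\ge t-1$), reducing the right hand side to $\iint(2+r-t)^\alpha(|\mathcal{T}\mathcal{Z}^{\le 5}W|^2+|\partial_y\mathcal{Z}^{\le 5}W|^2)\,dxdy \lesssim C_0^2\epsilon^2 l(t)$. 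Finally, to bound $\mathcal{Z}^{\le 2}\Wt$, I use that $\Wt$ is mean-zero in $y$: Poincar\'e combined with the Sobolev embedding $H^1(\m S^1)\hookrightarrow L^\infty(\m S^1)$ gives
\[
\sup_{\m S^1}|\mathcal{Z}^{\le 2}\Wt(t,x,\cdot)| \lesssim \|\partial_y\mathcal{Z}^{\le 2}W(t,x,\cdot)\|_{L^2(\m S^1)},
\]
and I invoke the estimate just proved for $\partial_y\mathcal{Z}^{\le 2}W$.

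The weightless bound \eqref{wext} cannot come directly from Sobolev on $\Sext_t$, so I integrate in time. For $(t,x,y)\in \dext_{T_0}$, the segment $\{(s,x,y): s\in[2,t]\}$ remains inside $\dext$, hence
\[
W(t,x,y) = W(2,x,y) + \int_2^t \partial_s W(s,x,y)\, ds.
\]
The first term is $O(\epsilon)$ by Sobolev embedding applied to the initial data assumption, and the second is controlled by \eqref{dwext}, with the $s$-integral $\int_2^t r^{-1}(2+r-s)^{-(\alpha+1)/2}\,ds$ being bounded uniformly in $(t,r)$ thanks to $\alpha>0$. The main obstacle will be the commutation analysis for $\mathcal{T}\mathcal{Z}^{\le 2}W$ in \eqref{dbarwext}: closing the argument requires showing that every term produced by commuting $\mathcal{T}$ past a pair of Klainerman fields either retains good-derivative form or carries a coefficient decaying fast enough to be absorbed using \eqref{dwext} already obtained.
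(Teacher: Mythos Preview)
Your proof is correct and follows essentially the same route as the paper: weighted Sobolev (lemma \ref{lemma_wsi2}) for \eqref{dwext} and \eqref{dbarwext}, lemma \ref{lemma_wsi} plus the Hardy corollary \ref{corhar} for \eqref{zwext}, and integration back to the initial slice for \eqref{wext}. Two small remarks. First, your ``main obstacle'' is not one: the right hand side of lemma \ref{lemma_wsi2} only involves $\partial_r$, the rotations $\Omega_{ij}$, and $\partial_y$ (see its proof), and these all commute with $\mathcal{T}_j$ up to linear combinations of the $\mathcal{T}_l$'s, so no boost commutators ever appear and the $\sqrt{l(t)}$ factor passes through cleanly. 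Second, for \eqref{wext} the paper integrates along the outgoing null direction $\partial_q=\partial_r-\partial_t$ rather than vertically in $t$; both choices stay inside $\dext$ and give a convergent integral for every $\alpha>0$, so your variant is equally valid.
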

	\begin{proof}
		The bounds \eqref{dwext} and \eqref{dbarwext} follow immediately from lemma \ref{lemma_wsi2} with $\beta=\alpha+1$ and $\beta=\alpha$ respectively and from \eqref{l(t)}.
		The pointwise bound \eqref{zwext} follows instead applying lemma \ref{lemma_wsi} with $\beta = \alpha$  and corollary \ref{corhar} to write that
		$$\iint_{\Sext_t}(2+r-t)^{\alpha-1}(Z \q Z^{\leq 4} W)^2 dxdy \lesssim \iint_{\Sext_t} (2+r-t)^{\alpha+1}(\partial \q Z^{\leq 5} W)^2 dxdy.$$ 
		Finally, the bound \eqref{wext} on $W$ is obtained from the integration of \eqref{dwext} along the direction $\partial_q = \partial_r-\partial_t$ until the initial time slice $t_0=2$ and from the smallness assumption on the initial data.
	\end{proof}	
	
	A trivial consequence of the decomposition \eqref{dec} and the bounds \eqref{dbarwext}-\eqref{zwext} that will be useful later in section \ref{PointwiseEst} is the following estimate for the zero-mode $W_0$ of the solution
	\begin{equation}\label{Z0_ext}
		\sup_{\m S^1}\left| Z \q Z^{\le 1}W_0\right|\lesssim C_0\ep r^{-1}(2+r-t)^{-\frac{\alpha}{2}}.
	\end{equation}

	\subsection{Propagation of the exterior energy bounds} \label{Propagation of the exterior energy bounds}
	In order to prove proposition \ref{prop:boot_ext} we start by considering any multi-index $\gamma$ with $|\gamma| =n \le 5$ and compare the system satisfied by the differentiated function $W^\gamma = (u^\gamma,v^\gamma)^T$ - which is a short hand notation for $\mathcal{Z}^\gamma W = (\mathcal{Z}^\gamma u,\mathcal{Z}^\gamma v)$ - to the inhomogeneous linear equation \eqref{linearized}. Here the variable that plays the role of the linear variable $\Wf$ is $W^\gamma$. 
	We remind the reader that all vector fields $\mathcal{Z}$ in the family \eqref{family_Z} are related to the geometry of the problem and are the generators of the Lorentz transformations of the Minkowski space $\mathbb{R}^{1+3}$. In particular, they preserve the structure of system \eqref{syteme} and equivalently of the equation \eqref{eq_W}.
	
	The equation satisfied by $W^\gamma$ is obtained by commuting $\q Z^\gamma$ to the equation \eqref{eq_W} 
	\begin{equation}\label{systeme_vf}
		(-\partial^2_t+\Delta_x)W^\gamma + (1+u)\partial^2_y W^\gamma = \Ff^\gamma
	\end{equation}
	where the inhomogeneous term $\Ff^\gamma$ is given by
	\begin{equation}\label{nonlin_vf}
		\Ff^\gamma = - \delta_\gamma\sum_{\substack{|\gamma_1|+|\gamma_2|  =|\gamma| \\  |\gamma_2|<|\gamma|}} u^{\gamma_1} \partial^2_y W^{\gamma_2} + \sum_{|\gamma_1|+|\gamma_2|  \le |\gamma| } \textbf{N}(W^{\gamma_1}, W^{\gamma_2}) 
	\end{equation}
	where $\delta_\gamma=0$ if $|\gamma|=0$, 1 otherwise.
	
	\begin{lemma}
		The nonlinear term $\mathbf{N}(\cdot, \cdot)$ in the right hand side of \eqref{nonlin_vf} is a two-vector of new linear combinations of the quadratic null forms introduced in \eqref{null_forms} that arise from the commutation of the Klainerman vector fields with $\mathbf{N}_1$ and $\mathbf{N}_2$. 
	\end{lemma}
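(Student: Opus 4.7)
\medskip

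The plan is to proceed by induction on $n = |\gamma|$, reducing to the single-vector-field case. For $n=0$ there is nothing to prove since $\mathbf{N}(W,W)$ is already a linear combination of the null forms in \eqref{null_forms}. For the inductive step, write $\mathcal{Z}^\gamma = \mathcal{Z} \cdot \mathcal{Z}^{\gamma'}$ with $|\gamma'|=n-1$ and apply a single $\mathcal{Z}\in\{\partial_t,\partial_j,\partial_y,\Omega_{ij},\Omega_{0i}\}$ to a generic term $Q(\mathcal{Z}^{\gamma_1}w_a, \mathcal{Z}^{\gamma_2}w_b)$ with $Q\in\{Q_0, Q_{ij}, Q_{0i}\}$ produced at the previous step. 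It therefore suffices to establish the commutation identity
\[
\mathcal{Z}\, Q(\phi,\psi) = Q(\mathcal{Z}\phi,\psi) + Q(\phi,\mathcal{Z}\psi) + \sum_{Q'} c_{Q,Q'}\, Q'(\phi,\psi),
\]
where each $Q'$ on the right is again one of the null forms of \eqref{null_forms} and $c_{Q,Q'}$ are universal constants.

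For the translations $\partial_t, \partial_j, \partial_y$ the identity is trivial with no extra $Q'$ terms because $\partial$ is a derivation and commutes with all coordinate partial derivatives. For the Euclidean rotations $\Omega_{ij}$ we use that $[\Omega_{ij},\partial_t] = [\Omega_{ij},\partial_y] = 0$ while $[\Omega_{ij},\partial_k] = \delta_{ik}\partial_j - \delta_{jk}\partial_i$; substituting into the expressions for $Q_0, Q_{kl}, Q_{0k}$ the extra terms organize themselves as linear combinations of the $Q_{ij}$ (from the spatial part) and of the $Q_{0k}$ (when a boost-type form is hit). For the boosts $\Omega_{0i}$ the key commutators are $[\Omega_{0i},\partial_t] = -\partial_i$ and $[\Omega_{0i},\partial_j] = -\delta_{ij}\partial_t$, so expanding e.g.\ $\Omega_{0i}Q_0(\phi,\psi)$ one finds that the commutator contribution is exactly $-2Q_{0i}(\phi,\psi)$, and analogous computations for $Q_{kl}$ and $Q_{0k}$ produce only $Q_0$ and $Q_{0m}$ correction terms. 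In every case the correction belongs again to the null-form family.

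Plugging the commutation identity back into $\mathcal{Z}\,Q(\mathcal{Z}^{\gamma_1}w_a,\mathcal{Z}^{\gamma_2}w_b)$ and using the inductive hypothesis for the lower-order factors $\mathcal{Z}(\mathcal{Z}^{\gamma_1}w_a)$ and $\mathcal{Z}(\mathcal{Z}^{\gamma_2}w_b)$, the term $\sum_{|\gamma_1|+|\gamma_2|\le|\gamma|}\mathbf{N}(W^{\gamma_1},W^{\gamma_2})$ in \eqref{nonlin_vf} is indeed a two-vector whose components are finite linear combinations of null forms evaluated at pairs of components of $W^{\gamma_1}$ and $W^{\gamma_2}$, with universal constant coefficients. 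The main obstacle is not conceptual but combinatorial: one has to verify the commutator table for each $(\mathcal{Z},Q)$ pair and check that no non-null quadratic structure (for instance a plain $\partial^2\phi\cdot\psi$ term without a derivative on $\psi$) is produced. A direct, case-by-case inspection using the explicit formulae for $\Omega_{ij}$ and $\Omega_{0i}$ confirms that all correction terms remain within the span of $\{Q_0, Q_{ij}, Q_{0i}\}$, completing the induction.
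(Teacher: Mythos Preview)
Your approach is essentially the same as the paper's: reduce to the single-vector-field case and verify, via the commutator table $[\mathcal{Z},\partial_\mu]$, that each correction term remains in the span of $\{Q_0,Q_{ij},Q_{0i}\}$, then iterate. The paper simply lists the explicit formulas for $\Omega_{ij}Q$ and $\Omega_{0i}Q$ for each $Q$, which is exactly the case-by-case inspection you describe.

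However, two of your illustrative computations are wrong. First, the commutator contribution in $\Omega_{0i}Q_0(\phi,\psi)$ is not $-2Q_{0i}(\phi,\psi)$ but zero: using $[\Omega_{0i},\partial_t]=-\partial_i$ and $[\Omega_{0i},\partial_j]=-\delta_{ij}\partial_t$, the four cross terms $-\partial_i\phi\,\partial_t\psi-\partial_t\phi\,\partial_i\psi+\partial_t\phi\,\partial_i\psi+\partial_i\phi\,\partial_t\psi$ cancel exactly, so $\Omega_{0i}Q_0(\phi,\psi)=Q_0(\Omega_{0i}\phi,\psi)+Q_0(\phi,\Omega_{0i}\psi)$. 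Second, your claim that boosts acting on $Q_{0k}$ and $Q_{kl}$ produce ``only $Q_0$ and $Q_{0m}$ correction terms'' is inaccurate: in fact $\Omega_{0i}Q_{0j}$ produces a $\pm Q_{ij}$ correction (not $Q_0$), while $\Omega_{0i}Q_{jk}$ produces $\pm Q_{0j}$ or $\pm Q_{0k}$ corrections. These errors do not break the argument---the corrections are still null forms---but you should redo the explicit computations before relying on them.
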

	\proof
	The proof follows by computing the commutator terms and iterating such formulas. 
	For the vector fields $\Omega_{ij}$, $1\le i<j\le 3$, we have 
	\[
	[\Omega_{ij}, \partial_0] = 0, \qquad [\Omega_{ij}, \partial_k] = 
	\begin{cases}
		\partial_j, \qquad & \text{if } k=i\\
		- \partial_i, \qquad & \text{if } k=j \\
		0, \qquad & \text{otherwise} 
	\end{cases}
	\]
	which implies that
	\[
	\Omega_{ij}Q_0(\phi, \psi) = Q_0(\Omega_{ij}\phi, \psi) + Q_0(\phi, \Omega_{ij}\psi)
	\]
	\[
	\Omega_{ij}Q_{0k}(\phi, \psi)  = Q_{0k}(\Omega_{ij}\phi, \psi)  + Q_{0k}(\phi, \Omega_{ij}\psi)  +
	\begin{cases}
		Q_{0j}(\phi, \psi), \quad & \text{if } k=i \\
		Q_{0i}(\phi, \psi), \quad & \text{if } k=j
	\end{cases}
	\]
	and for $1\le h <k\le 3$
	\[
	\Omega_{ij} Q_{hk}(\phi, \psi) =  Q_{hk}(\Omega_{ij}\phi, \psi) +  Q_{hk}(\phi, \Omega_{ij}\psi) + 
	\begin{cases}
		0, \quad &\text{if } h=i, j=k\\
		Q_{jk}(\phi, \psi), \quad & \text{if } h=i, \, j<k \\
		-Q_{kj}(\phi, \psi), \quad &  \text{if } h=i, \, j>k \\
		Q_{hj}(\phi, \psi), \quad & \text{if } h< j , \, k=i \\
		-Q_{ik}(\phi, \psi), \quad & \text{if } h=j,\,  k>i \\
		Q_{ih}(\phi, \psi), \quad & \text{if } k=j, \, h>i \\
		-Q_{hi}(\phi, \psi), \quad & \text{if } k=j, h<i
	\end{cases}
	\]
	For the vector fields $\Omega_{0i}$, $i=\overline{1,3}$, we have
	\[
	[\Omega_{0i}, \partial_0] = -\partial_i, \qquad [\Omega_{0i}, \partial_j] = \begin{cases}
		\partial_0, \quad & \text{if } k=i, \\
		0, \quad & \text{otherwise}
	\end{cases}
	\]
	which implies
	\[
	\Omega_{0i}Q_0(\phi, \psi) = Q_0(\Omega_{0i}\phi, \psi) + Q_0(\phi, \Omega_{0i}\psi)
	\]
	\[
	\Omega_{0i}Q_{0j}(\phi,\psi)= Q_{0j}(\Omega_{0i}\phi,\psi) + Q_{0j}(\phi,\Omega_{0i}\psi) + 
	\begin{cases}
		-Q_{ij}(\phi,\psi), \quad & \text{if } i<j \\
		Q_{ji}(\phi, \psi), \quad & \text{if } i>j \\
		0, \quad & \text{otherwise}
	\end{cases}
	\]
	and for $1\le j<k\le 3$
	\[
	\Omega_{0i}Q_{jk}(\phi, \psi) = Q_{jk}(\Omega_{0i}\phi, \psi) + Q_{jk}(\phi, \Omega_{0i}\psi) + 
	\begin{cases}
		Q_{0k}(\phi, \psi), \quad & \text{if } j=i \\
		-Q_{0j}(\phi, \psi), \quad &  \text{if } k=i \\
		0, \quad & \text{otherwise}
	\end{cases}
	\]
	\endproof
	We have seen in proposition \ref{prop:est_enex_hyp} that under the a-priori energy assumption \eqref{boot_energy} the solution $W$ satisfies the pointwise bounds \eqref{wext} and \eqref{dwext}. The hypothesis of proposition \ref{prop: en_ineq_ext} are then fulfilled and we have that
	\begin{multline}\label{ineq_enext}
		\left\|W^\gamma \right\|_{\Xext_{T_0}}^2 + \iint_{\mathcal{C}_{[2, T_0]}} |\mathcal{T}W^\gamma|^2 + |\partial_yW^\gamma|^2 \, d\sigma dy dt   \\
		\lesssim \enext(2, W^\gamma)  
		+ \left\|(2+r-t)^{\frac{\alpha+1}{2}}\Ff^{\gamma}\right\|_{L^1_tL^2_{xy}(\dext_{T_0})} \|W^{\gamma}\|_{\Xext_{T_0}}.
	\end{multline}

	\proof[Proof of proposition \ref{prop:boot_ext}]
	It is enough for our purpose to estimate the weighted norm of the source term $\Ff^\gamma$ and prove that for every $|\gamma|\le 5$ 
	\begin{equation}\label{est_Fgamma}
		\left\| (2+r-t)^\frac{\alpha+1}{2}\Ff^\gamma\right\|_{L^2_{xy}(\Sext_t)}\lesssim C_0^2\ep^2 (t-1)^{-\frac{\alpha+1}{2}}\sqrt{l(t)},
	\end{equation}
	where $l\in L^1([2, T_0])$.
	In fact, if we plug \eqref{est_Fgamma} and a-priori energy bound \eqref{boot_energy} into \eqref{ineq_enext} we obtain that there exists some universal positive constant $C$ so that
	\[
	\left\|W \right\|_{X^{5, \alpha}_{T_0}}^2 + \sum_{|\gamma|\le 5} \iint_{\mathcal{C}_{[2, T_0]}} |\mathcal{T}W^\gamma|^2 + |\partial_yW^\gamma|^2 \, d\sigma dy dt   
	\le C\enext_5(2, W)  
	+ 2CC_0^3\ep^3.
	\]
	For any fixed constant $K>1$ (e.g. $K=2$) we can then choose $C_0>0$ sufficiently large such that
	\[
	\enext_{5}(2, W) \le \frac{C^2_0\ep^2}{CK}
	\]
	and  $\ep_0>0$ sufficiently small so that $2CC^2_0\ep<1/K$ for $\ep\leq \ep_0$ to finally obtain
	\begin{equation}\label{en_ext1}
		\left\|W \right\|_{X^{5, \alpha}_{T_0}}^2 + \sum_{|\gamma|\le 5}\iint_{\mathcal{C}_{[2, T_0]}}\left( |\mathcal{T}\mathcal{Z}^{\gamma}W|^2 + |\partial_y \mathcal{Z}^{\gamma}W|^2\right)\, d\sigma dy dt \le \frac{2C_0\ep^2}{K}.
	\end{equation}
	
	We estimate the different contributions to $\Ff^\gamma$ separately. In all the estimates that follow we will use the a-priori energy bound \eqref{boot_energy} and the fact that $r>t-1$ in the exterior region.
	
	\smallskip
	\textit{1. The null terms:} we use here the null form representation via the formula \eqref{rewrite1}
	\[
	\mathbf{N}(W^{\gamma_1}, W^{\gamma_2}) \sim \mathcal{T}W^{\gamma_1} \cdot\partial W^{\gamma_2} + \partial W^{\gamma_1}\cdot\mathcal{T}W^{\gamma_2}.
	\]
	The products in the above right hand side are equivalent given the range of $\gamma_1$ and $\gamma_2$ and we only focus on the analysis of the first one. We distinguish between the different values $\gamma_1$ and $\gamma_2$ can take and remind the reader that $|\gamma_1|+|\gamma_2|\le |\gamma|=n\le 5$.
	
	\smallskip
	\textit{a. The case $|\gamma_1| = 0$}: here $\gamma_2=\gamma$ and we immediately obtain from \eqref{dbarwext} that
	\[
	\left\| (2+r-t)^\frac{\alpha+1}{2} \mathcal{T}W\cdot \partial W^{\gamma}\right\|_{L^2_{xy}}\le \left\| \mathcal{T}W \right\|_{L^\infty_{xy}}\|W^\gamma\|_{\Xext_{T_0}}\lesssim C^2_0\ep^2 (t-1)^{-1}\sqrt{l(t)}.
	\]
	
	\smallskip
	\textit{b. The case $|\gamma_2|=0$}: here $\gamma_1=\gamma$ and we obtain from \eqref{dwext} that
	\[
	\begin{aligned}
		\left\| (2+r-t)^\frac{\alpha+1}{2} \mathcal{T} W^{\gamma}\cdot \partial W\right\|_{L^2_{xy}} &
		\le \left\| (2+r-t)^\frac{1}{2}\partial W \right\|_{L^\infty_{xy}} \left\|  (2+r-t)^\frac{\alpha}{2} \mathcal{T}W^{\gamma} \right\|_{L^2_{xy}} \\
		& \lesssim C_0^2\ep^2 (t-1)^{-1}\sqrt{l(t)}.
	\end{aligned}
	\]
	
	\smallskip
	\textit{c. The case $|\gamma_1|, |\gamma_2|>0$}: we use spherical polar coordinates and Cauchy-Schwarz inequality to bound the weighted $L^2_{xy}(\Sext_t)$ norm of $\mathcal{T}W^{\gamma_1}\cdot \partial W^{\gamma_2}$ as follows
	\[
	\begin{aligned}
		\left\| (2+r-t)^{\frac{\alpha+1}{2}}\mathcal{T}W^{\gamma_1} \cdot\partial W^{\gamma_2}\right\|_{L^2_{xy}}^2 & =  \int_{t-1}^{\infty}\iint_{\m S^2\times \m S^1} (2+r-t)^{\alpha+1}|\mathcal{T}W^{\gamma_1}|^2 \cdot |\partial W^{\gamma_2}|^2\, r^2drd\sigma dy \\
		&\hspace{-10pt}  \lesssim \int_{t-1}^\infty (2+r-t)^{\alpha+1} \|\mathcal{T}W^{\gamma_1}\|^2_{L^{4}(\m S^2\times \m S^1)}  \|\partial W^{\gamma_2}\|^2_{L^{4}(\m S^2\times \m S^1)}\, r^2dr.
	\end{aligned}
	\]
	In the case where $|\gamma_1|\le n-2$ we apply the inequality \eqref{wks4_bis} to $\mathcal{T}W^{\gamma_1}$ with $\beta=\alpha$ and the Sobolev's injection $H^1(\m S^2\times \m S^1)\subset L^4(\m S^2\times \m S^1)$ to $\partial W^{\gamma_2}$. We derive that
	\[
	\begin{aligned}
		& \int_{t-1}^\infty (2+r-t)^{\alpha+1} \|\mathcal{T}W^{\gamma_1}\|^2_{L^{4}(\m S^2\times \m S^1)}  \|\partial W^{\gamma_2}\|^2_{L^{4}(\m S^2\times \m S^1)}\, r^2dr\\
		& \lesssim \left\|(2+r-t)^\frac{\alpha}{2}\mathcal{T}\mathcal{Z}^{\le n}W \right\|^2_{L^2_{xy}(\Sext_t)} \int_{t-1}^\infty (2+r-t) r^{-2}  \left\|\partial \mathcal{Z}^{\le n}W\right\|^2_{L^2(\m S^2\times \m S^1)} r^2 dr\\
		& \lesssim (t-1)^{-2}\left\|(2+r-t)^\frac{\alpha}{2}\mathcal{T}\mathcal{Z}^{\le n}W \right\|^2_{L^2_{xy}(\Sext_t)}\left\|(2+r-t)^\frac{\alpha+1}{2}\partial \mathcal{Z}^{\le n}W \right\|^2_{L^2_{xy}(\Sext_t)} .
	\end{aligned}
	\]
	In the remaining case where $|\gamma_1|=n-1$ and $|\gamma_2|=1$ we apply the inequality \eqref{wks4_bis} to $\partial W^{\gamma_2}$ with $\beta=\alpha+1$ and the injection $H^1(\m S^2\times \m S^1)\subset L^4(\m S^2\times \m S^1)$ to $\mathcal{T} W^{\gamma_1}$. We get
	\[
	\begin{aligned}
		& \int_{t-1}^\infty (2+r-t)^{\alpha+1} \|\mathcal{T} W^{\gamma_1}\|^2_{L^{4}(\m S^2\times \m S^1)}  \|\partial W^{\gamma_2}\|^2_{L^{4}(\m S^2\times \m S^1)}\, r^2dr\\
		& \hspace{3cm} \lesssim  \left\| (2+r-t)^{\frac{\alpha+1}{2}}\partial \mathcal{Z}^{\le n}W\right\|^2_{L^2_{xy}(\Sext_t)} \iint_{\Sext_t} r^{-2}|\mathcal{T}\mathcal{Z}^{\le n}W|^2\, dxdy  \\
		&  \hspace{3cm} \lesssim (t-1)^{-2} \left\|(2+r-t)^\frac{\alpha}{2}\mathcal{T}\mathcal{Z}^{\le n}W \right\|^2_{L^2_{xy}(\Sext_t)} \left\| W\right\|^2_{X^{n, \alpha}_{T_0}}.
	\end{aligned}
	\]
	In both scenarios we obtain that
	\[
	\left\| (2+r-t)^{\frac{\alpha+1}{2}}\mathcal{T}W^{\gamma_1} \partial W^{\gamma_2}\right\|_{L^2_{xy}}\lesssim C_0^2\ep^2(t-1)^{-1}\sqrt{l(t)}.
	\]
	
	\smallskip
	
	\textit{2. The $u^{\gamma_1}\cdot \partial^2_y  W^{\gamma_2}$ terms}: since $|\gamma_1|\ge 1$ we can write $u^{\gamma_1} = \mathcal{Z}u^{\tilde{\gamma}_1}$ for some $|\tilde{\gamma}_1| = |\gamma_1|-1$.  We can also write $\partial^2_y W^{\gamma_2} = \partial_y W^{\tilde{\gamma}_2}$ for some other $\tilde{\gamma}_2$ such that $|\tilde{\gamma}_2| = |\gamma_2|+1$ and observe that then $|\tilde{\gamma}_1|+|\tilde{\gamma}_2| = n$. Depending on $\gamma_1 = (\alpha_1, \beta_1)$ we can distinguish two cases:
	
	\smallskip
	\textit{a. The case $|\alpha_1| >0$}: here we choose $\tilde{\gamma}_1$ so that $\mathcal{Z}u^{\tilde{\gamma}_1} = \partial u^{\tilde{\gamma}_1}$. The products $\partial u^{\tilde{\gamma}_1}\cdot\partial_y W^{\tilde{\gamma}_2}$ have the same behavior of the null terms treated in case 1.
	
	\smallskip
	\textit{b. The case $|\alpha_1|=0$}:
	here $\mathcal{Z}^{\gamma_1} = Z^{\beta_1}$ is a pure product of Klainerman vector fields and $\mathcal{Z}u^{\tilde{\gamma}_1} = Zu^{\tilde{\gamma}_1}$. We choose the exponents $(p_1, p_2)$ as follows
	\[
	(p_1, p_2) = 
	\begin{cases}
		(2, \infty)\qquad & \text{if } |\tilde{\gamma}_1| = n-1 \\
		(\infty, 2) \qquad & \text{if } |\tilde{\gamma}_2| = n \\
		(4,4) \qquad & \text{otherwise}
	\end{cases}
	\]
	and will place the two factors in $L^{p_1}(\m S^2\times \m S^1)$ and $L^{p_2}(\m S^2\times \m S^1)$ respectively. We use the Sobolev's injections $H^2(\m S^2\times \m S^1)\subset L^\infty(\m S^2\times \m S^1)$ and $H^1(\m S^2\times \m S^1)\subset L^4(\m S^2\times \m S^1)$ to derive 
	\[
	\begin{aligned}
		\left\| (2+r-t)^{\frac{\alpha+1}{2}} Zu^{\tilde{\gamma}_1} \partial_y W^{\tilde{\gamma}_2}\right\|^2_{L^2_{xy}} &  \lesssim \int_{t-1}^\infty (2+r-t)^{\alpha+1} \left\|Zu^{\tilde{\gamma}_1} \right\|^2_{L^{p_1}(\m S^2\times \m S^1)}\left\|\partial_y W^{\tilde{\gamma}_2}\right\|^2_{L^{p_2}(\m S^2\times \m S^1)}\, r^2dr \\
		&\hspace{-1.4cm}\lesssim\int_{t-1}^\infty (2+r-t)^{\alpha+1} \left\| Z\mathcal{Z}^{\le 4}u\right\|^2_{L^2(\m S^2\times \m S^1)}  \left\|\partial_y \mathcal{Z}^{\le 5}W\right\|^2_{L^2(\m S^2\times \m S^1)}\, r^2 dr.
	\end{aligned}
	\]
	Applying the inequality \eqref{wks2} to $Z\mathcal{Z}^{\le 4}u$ with $\beta=\alpha$ and successively the weighted Hardy inequality proved in corollary \ref{corhar} with $\beta = \alpha-1$ we find that
	
	\[
	\begin{aligned}
		& \sup_{r\ge t-1} (2+r-t)^{\alpha}r^2  \left\| Z\mathcal{Z}^{\le 4}u\right\|^2_{L^2(\m S^2\times \m S^1)}\\
		& \lesssim \left\|(2+r-t)^{\frac{\alpha+1}{2}}\partial_r \mathcal{Z}^{\le 5}u\right\|^2_{L^2_{xy}(\Sext_t)} + \left\|(2+r-t)^{\frac{\alpha-1}{2}} Z \mathcal{Z}^{\le 4}u\right\|^2_{L^2_{xy}(\Sext_t)}\\
		& \lesssim  \left\|(2+r-t)^{\frac{\alpha+1}{2}}\partial \mathcal{Z}^{\le 5}u\right\|^2_{L^2_{xy}(\Sext_t)}.
	\end{aligned}
	\]
	We can therefore continue the previous chain of inequalities 
	\[
	\begin{aligned}
		& \lesssim \left\|(2+r-t)^{\frac{\alpha+1}{2}}\partial_r \mathcal{Z}^{\le 5}u\right\|^2_{L^2_{xy}(\Sext_t)} \int_{t-1}^\infty (2+r-t)r^{-2}   \left\|\partial_y \mathcal{Z}^{\le 5}W\right\|^2_{L^2(\m S^2\times \m S^1)}\, r^2dr \\
		& \lesssim (t-1)^{-1-\alpha} \left\|W\right\|^2_{X^{5, \alpha}_{T_0}} \left\| (2+r-t)^{\frac{\alpha}{2}}\partial_y \mathcal{Z}^{\le 5}W \right\|^2_{L^2_{xy}(\Sext_t)}
	\end{aligned}
	\]
	and finally conclude that
	\[
	\sum_{\substack{|\gamma_1|+|\gamma_2|=n\\ |\gamma_1|\ge 1}}\left\|(2+r-t)^\frac{\alpha+1}{2} u^{\gamma_1}\cdot\partial^2_yW^{\gamma_2}\right\|_{L^2_{xy}}\lesssim C_0^2\ep^2 (t-1)^{-\frac{\alpha+1}{2}}\sqrt{l(t)}.
	\]\vspace{-10pt}
	\endproof
	
	An immediate consequence of the global energy bounds \eqref{global_strongnorm} obtained from proposition \ref{prop:boot_ext} is the following estimate on the higher order energies of the solution $W$ on the truncated exterior hyperboloids $\hex_s$.
	
	\begin{proposition}\label{prop:est_enex_hyp}
		Let $W=(u,v)^T$ be a global solution to the Cauchy problem \eqref{eq_W}-\eqref{data_W} in the exterior region $\dext$. Then
		\[
		\enexh_{5}(s, W) +\sum_{|\gamma|\le 5} \iint_{\CT}  |\mathcal{T}\q Z^\gamma W|^2 + |\partial_y \q Z^\gamma W|^2\, d\sigma dy dt\lesssim C_0^2\ep^2, \qquad s\in [2, \infty).
		\]
	\end{proposition}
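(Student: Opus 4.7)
\proof[Proof plan of proposition \ref{prop:est_enex_hyp}]
The strategy is to apply proposition \ref{prop:ext_hyp} to the differentiated function $W^\gamma = \q Z^\gamma W$ for each $|\gamma|\le 5$, using the equation \eqref{systeme_vf} with source term $\Ff^\gamma$ given by \eqref{nonlin_vf}, and then to sum over $\gamma$. The hypotheses \eqref{estu1}--\eqref{estut} imposed on the quasilinear coefficient $u$ in proposition \ref{prop:ext_hyp} hold globally on $\dext$ as a consequence of the pointwise bounds \eqref{wext}--\eqref{dbarwext} obtained in proposition \ref{prop:ext_pointbound}, which are themselves available globally thanks to the global energy bound \eqref{global_strongnorm} derived from proposition \ref{prop:boot_ext}. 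In particular, the function $l$ appearing in \eqref{estu2} is the same $l \in L^1([2,\infty))$ arising from \eqref{l(t)}, whose $L^1$ norm is controlled by $\|W\|^2_{X^{5,\alpha}_\infty} \lesssim C_0^2\ep^2$.

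The first step is then to control the source term $\Ff^\gamma$ in the $L^1_t L^2_{xy}(\hex_{[2,s]})$ norm. This region is foliated by the slices $\Sigma^s_t \subset \Sext_t$, so
\[
\|\Ff^\gamma\|_{L^1_t L^2_{xy}(\hex_{[2,s]})} \le \int_2^{\infty}\|\Ff^\gamma\|_{L^2_{xy}(\Sext_t)}\, dt.
\]
Since $2+r-t \ge 1$ throughout $\dext$, the pointwise weight in \eqref{est_Fgamma} is harmless and yields
\[
\|\Ff^\gamma\|_{L^2_{xy}(\Sext_t)} \le \left\|(2+r-t)^{\frac{\alpha+1}{2}}\Ff^\gamma\right\|_{L^2_{xy}(\Sext_t)} \lesssim C_0^2\ep^2\,(t-1)^{-\frac{\alpha+1}{2}}\sqrt{l(t)}.
\]
I emphasize that the derivation of \eqref{est_Fgamma} carried out in the proof of proposition \ref{prop:boot_ext} was pointwise in $t$ and used only the a-priori bounds already available globally; the same argument therefore applies here without modification.

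The second step is to integrate in time. Using Cauchy--Schwarz together with $\alpha>0$ and $l\in L^1([2,\infty))$,
\[
\int_2^{\infty}(t-1)^{-\frac{\alpha+1}{2}}\sqrt{l(t)}\, dt \le \left(\int_2^\infty (t-1)^{-(\alpha+1)}\, dt\right)^{\!1/2}\|l\|_{L^1([2,\infty))}^{1/2} \lesssim C_0\ep,
\]
so that $\|\Ff^\gamma\|_{L^1_t L^2_{xy}(\hex_{[2,s]})} \lesssim C_0^3\ep^3$ uniformly in $s\ge 2$.

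Finally, I insert this bound into the inequality of proposition \ref{prop:ext_hyp} applied to $W^\gamma$, using $\|W^\gamma\|_{X^{\text{ex},0}_\infty}\le\|W\|_{X^{5,\alpha}_\infty}\lesssim C_0\ep$ and $\enexh(2,W^\gamma)\lesssim \ep^2$ (from the initial data assumption), to obtain
\[
\enexh(s,W^\gamma) + \iint_{\CT}|\q T W^\gamma|^2 + |\partial_y W^\gamma|^2\, d\sigma dy dt \lesssim \ep^2 + \ep C_0^2\ep^2 + C_0^3\ep^3\cdot C_0\ep \lesssim C_0^2\ep^2.
\]
Summing over $|\gamma|\le 5$ yields the statement. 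The main technical point to verify is really just step two, i.e.\ the integrability in $t$ of the quantity $(t-1)^{-(\alpha+1)/2}\sqrt{l(t)}$, which is the reason why the exterior weight exponent was chosen strictly positive; the remaining ingredients are direct consequences of results already established in the preceding sections.
\endproof
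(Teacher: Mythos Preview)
Your proof is correct and follows essentially the same approach as the paper: apply proposition \ref{prop:ext_hyp} to $W^\gamma$, control $\|\Ff^\gamma\|_{L^1_tL^2_{xy}(\hex_{[2,s]})}$ via \eqref{est_Fgamma} and the integrability of $(t-1)^{-(\alpha+1)/2}\sqrt{l(t)}$, and invoke the global bound \eqref{global_strongnorm}. One cosmetic slip: the initial term furnished by proposition \ref{prop:ext_hyp} is $E^{\text{ex},0}(2,W^\gamma)$ (the flat-slice energy at $t=2$), not $\enexh(2,W^\gamma)$, but this does not affect the argument since both are controlled by the initial data assumption.
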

	\proof
	The result follows by applying proposition \ref{prop:ext_hyp} with $\Wf = W^\gamma$ and $\Ff  =\Ff^\gamma$ for any multi-index $\gamma$ such that $|\gamma|\le 5$ and then using the global energy bound \eqref{global_strongnorm}, the estimate \eqref{est_Fgamma} of the source term $\Ff^\gamma$ and the fact that
	\[
	\left\|\Ff^\gamma\right\|_{L^1_tL^2_{xy}(\hex_{[2, s]})}\lesssim \int_2^\infty \left\|\Ff^\gamma\right\|_{L^2_{xy}(\Sext_t)}dt\lesssim C_0^2\ep^2.
	\]
	
	\endproof	
	
	We conclude this section with the derivation of a bound for the higher order exterior conformal energy of $W_0$ as well as for the higher order conformal energy on portions of the hyperboloid $\q H_s$ in the exterior region $\dext$.
	
	\begin{proposition}\label{prpconfext}
		Assume we have a solution $W = (u,v)^T$ to the Cauchy problem \eqref{eq_W}-\eqref{data_W} in the exterior region $\dext_{T_0}$ that satisfies the a-priori exterior energy bounds \eqref{boot_energy}. Then \small{
			\begin{equation}\label{est_conf_enex}
				\begin{aligned}
					\sup_{[2, T_0]}\exec_{4}(t, W_0) + \sum_{|\gamma|\le 4}\int_{\q C_{[2, T_0]}} \hspace{-10pt} \left| (t+r)(\partial_t W^\gamma_0+\partial_r W^\gamma_0) + 2W^\gamma_0\right|^2+(t-r)^2(|\nabla W^\gamma_0|^2-(\partial_r W^\gamma_0)^2)\, d\sigma dt\\ \lesssim C_0^2 \ep^2 \ln T_0 ,
				\end{aligned}
		\end{equation}}
		where the implicit constant only depends on $C_0$.
	\end{proposition}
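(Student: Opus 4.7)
To establish Proposition \ref{prpconfext} I apply the exterior conformal energy inequality of Proposition \ref{prop:conf_ext} to $Z^\beta W_0$ for each multi-index $|\beta|\le 4$. From the decomposition \eqref{syst_freq}, the zero-mode $W_0$ is a solution of a flat inhomogeneous wave equation on $\m R^{1+3}$:
$$
\Box_x W_0 = \mathbf{N}(W_0,W_0) + \int_{\m S^1}\mathbf{N}(\mathtt W,\mathtt W)\,dy + \int_{\m S^1}\partial_y\mathtt u\cdot\partial_y\mathtt W\,dy,
$$
the cross term $\int_{\m S^1}\mathbf{N}(W_0,\mathtt W)\,dy$ vanishing by orthogonality of the Fourier modes on $\m S^1$. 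Since Klainerman vector fields commute with $\Box_x$ on $\m R^{1+3}$, the function $Z^\beta W_0$ satisfies $\Box_x(Z^\beta W_0)=\Ff_0^\beta$, with $\Ff_0^\beta$ a linear combination of source terms of the schematic form $\int_{\m S^1}\mathbf{N}(Z^{\beta_1}W_i,Z^{\beta_2}W_j)\,dy$ and $\int_{\m S^1}\partial_y Z^{\beta_1}\mathtt u\cdot\partial_y Z^{\beta_2}\mathtt W\,dy$ with $|\beta_1|+|\beta_2|\le|\beta|\le 4$, the null structure being preserved by the commutations thanks to the lemma following \eqref{nonlin_vf}.

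Summing the inequalities of Proposition \ref{prop:conf_ext} over $|\beta|\le 4$ and using the standard integration argument for quadratic integral inequalities $E(T)\le E(2)+\int_2^T f(t)\, E(t)^{1/2}dt$, which gives $E(T)^{1/2}\le E(2)^{1/2}+\tfrac12\int_2^T f(t)dt$, reduces the proof to the source estimate
$$
\sum_{|\beta|\le 4}\int_2^{T_0}\bigl\|(t+r)\Ff_0^\beta\bigr\|_{L^2_x(\Sext_t)}dt \lesssim C_0^2\ep^2\,(\ln T_0)^{1/2}.
$$
The initial data $\exec_4(2, W_0)\lesssim \ep^2$ is controlled by the weighted $H^5$ assumption on the Cauchy data. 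The boundary integral on $\q C_{[2,T_0]}$ in Proposition \ref{prop:conf_ext} enters the energy identity with a favourable sign and is absorbed on the left-hand side.

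The source estimate is the technical heart of the argument and I would exploit the null structure via the representation $\mathbf{N}(\phi,\psi)=\mathcal T\phi\cdot\partial\psi+\partial\phi\cdot\mathcal T\psi$. For each summand I distribute the Klainerman vector fields so that one factor carries at most two of them and is placed in $L^\infty_{xy}(\Sext_t)$, while the other carries at most five and is placed in $L^2_{xy}(\Sext_t)$. The pointwise bounds \eqref{dbarwext}--\eqref{zwext} of Proposition \ref{prop:ext_pointbound} supply
$$
\|\mathcal T\q Z^{\le 2}W\|_{L^\infty_{xy}}+\|\partial_y\q Z^{\le 2}W\|_{L^\infty_{xy}}+\|\q Z^{\le 2}\mathtt W\|_{L^\infty_{xy}} \lesssim C_0\ep\, r^{-1}\sqrt{l(t)}\,(2+r-t)^{-\alpha/2},
$$
and since $(t+r)\lesssim r$ on $\dext$ the spatial weight $(t+r)$ is absorbed in the $r^{-1}$; the other factor is controlled in $L^2_{xy}$ by the global exterior energy bound \eqref{global_strongnorm} of Proposition \ref{prop:boot_ext}. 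The cross term $\int_{\m S^1}\partial_y\mathtt u\cdot\partial_y\mathtt W\, dy$ is handled analogously and is in fact easier since both factors carry a $\partial_y$.

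The main obstacle is to obtain the correct time integrability. A direct application of the bounds above yields the pointwise-in-time estimate $\|(t+r)\Ff_0^\beta\|_{L^2_x(\Sext_t)}\lesssim C_0^2\ep^2\sqrt{l(t)}$, whose naive time integration only gives a $T_0^{1/2}$ loss; to recover the logarithmic behaviour I would run the estimate as a Cauchy--Schwarz in $t$ paired against the bootstrap assumption $\exec_4(t,W_0)\lesssim C_0^2\ep^2\ln t$ on the conformal energy itself, trading a factor of $t$ to turn the $L^2_t$ norm of $\sqrt{l(t)}$ into an $L^1_t$ bound of size $(\ln T_0)^{1/2}$. This is the characteristic logarithmic loss encountered in $(3+1)$-dimensional null-form wave equations with critical decay, and the argument closes under the smallness $C_0\ep\ll 1$ to yield the claimed bound with constant depending only on $C_0$.
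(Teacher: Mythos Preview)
Your proposal has a genuine gap in the treatment of the null terms. You use the representation \eqref{rewrite1}, $\mathbf{N}(\phi,\psi)=\mathcal T\phi\cdot\partial\psi+\partial\phi\cdot\mathcal T\psi$, and the pointwise bound \eqref{dbarwext} on $\mathcal T\q Z^{\le 2}W$. This yields at best $\|(t+r)\mathbf{N}(W^{\gamma_1},W^{\gamma_2})\|_{L^2_x(\Sext_t)}\lesssim C_0^2\ep^2\sqrt{l(t)}$, because only \emph{one} factor carries $\sqrt{l(t)}$ (the $\mathcal T$ factor in $L^\infty$) while the other factor $\partial\q Z^{\le 4}W$ in $L^2$ is merely bounded by the energy, with no extra time decay. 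Since $l\in L^1$ only guarantees $\sqrt{l}\in L^2_t$, the time integral $\int_2^{T_0}\sqrt{l(t)}\,dt$ can be as large as $T_0^{1/2}$, and no Cauchy--Schwarz rearrangement against $\exec_4(t,W_0)^{1/2}$ will save this: any such pairing reintroduces a polynomial factor of $T_0$ (for instance $(\int l)^{1/2}(\int E)^{1/2}\gtrsim (T_0\ln T_0)^{1/2}$). The phrase ``trading a factor of $t$'' does not correspond to a valid estimate here.

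The paper's route is different and this is where the work lies. For the null terms one uses instead the representation \eqref{rewrite}, i.e.\ $\mathbf{N}=t^{-1}ZW^{\gamma_1}\cdot\partial W^{\gamma_2}+t^{-1}\partial W^{\gamma_1}\cdot ZW^{\gamma_2}+\frac{t-r}{t}\partial W^{\gamma_1}\cdot\partial W^{\gamma_2}$, which produces an explicit $t^{-1}$ factor. Combined with the bound \eqref{zwext}, $|Z\q Z^{\le 2}W|\lesssim C_0\ep r^{-1}(2+r-t)^{-\alpha/2}$ (note: no $\sqrt{l(t)}$), this gives $\|(t+r)\mathbf{N}\|_{L^2_x}\lesssim C_0^2\ep^2 t^{-1}$ when the low-derivative factor is put in $L^\infty$. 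When instead $|\gamma_1|\ge 3$ and $ZW^{\gamma_1}$ must go in $L^2$, one decomposes $W^{\gamma_1}=W_0^{\gamma_1}+\Wt^{\gamma_1}$, controls $Z\Wt^{\gamma_1}$ by Poincar\'e, and controls $ZW_0^{\gamma_1}$ via the weighted Hardy inequality of Corollary~\ref{corhar}; this is how one avoids needing a sixth vector field. The $\partial_y u^{\gamma_1}\cdot\partial_y W^{\gamma_2}$ terms are better: here \emph{both} factors carry a $\partial_y$, so both the $L^\infty$ and the $L^2$ factor contribute $\sqrt{l(t)}$ via \eqref{dbarwext} and \eqref{l(t)}, giving $C_0^2\ep^2 l(t)\in L^1_t$. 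The upshot is $\|(t+r)\Ff_0^\gamma\|_{L^2_x}\lesssim C_0^2\ep^2(l(t)+t^{-1})$, whose time integral is $O(\ln T_0)$, and the inequality closes directly without any bootstrap on $\exec_4$.
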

	\proof 
	Let us fix $|\gamma|\le 4$. By integrating \eqref{systeme_vf} over the sphere $\m S^1$ we obtain that $W_0^\gamma$ is solution to the following linear inhomogeneous wave equation
	\begin{equation}\label{syst_u0}
		(-\partial^2_t + \Delta_x) W^\gamma_0 = \mathbf{F}^\gamma_0 
	\end{equation}
	with source term
	\begin{equation}\label{nonlin_0}
		\begin{aligned}
			\mathbf{F}^\gamma_0 & = \int_{\m S^1} \mathbf{F}^\gamma \, dy - \int_{\m S^1} u \cdot \partial^2_y W^\gamma \, dy \\
			&=  \sum_{\substack{|\gamma_1|+ |\gamma_2| \le |\gamma|}}\int_{\m S^1} \mathbf{N}(W^{\gamma_1}, W^{\gamma_2})\, dy + \sum_{|\gamma_1|+|\gamma_2|=|\gamma|}\int_{\m S^1} \partial_y u^{\gamma_1} \cdot \partial_y W^{\gamma_2} \, dy  .
		\end{aligned}
	\end{equation}
	When applying proposition \ref{prop:conf_ext} with $\Wf_0 = W_0^\gamma$ and $\Ff_0 = \Ff_0^\gamma$ we derive that for all $T\in [2, T_0]$
	\begin{multline}\label{conf_ext_Wgamma}
		\exec(T, W_0^\gamma) +\int_{\q C_{[2, T]}}\left| (t+r)(\partial_t W^\gamma_0+\partial_r W^\gamma_0) + 2W^\gamma_0\right|^2+(t-r)^2(|\nabla W^\gamma_0|^2-(\partial_r W^\gamma_0)^2)\, d\sigma dt\\
		\le \exec(2,  W_0^\gamma) +\int_2^T\left\|(t+r)\Ff^\gamma_0 \right\|_{L^2_x(\Sext_t)}\exec(t, W_0^\gamma)^\frac{1}{2} dt
	\end{multline}
	where
	\[
	\left\|(t+r)\Ff_0^\gamma\right\|_{L^2_x(\Sext_t)}\le \hspace{-15pt}\sum_{|\gamma_1|+|\gamma_2| \le |\gamma|}\left\|(t+r)\mathbf{N}(W^{\gamma_1}, W^{\gamma_2}) \right\|_{L^2_{xy}} + \sum_{|\gamma_1|+|\gamma_2| = |\gamma|} \left\|(t+r) \partial_y u^{\gamma_1} \cdot \partial_y W^{\gamma_2}\right\|_{L^2_{xy}}.
	\]
	We estimate the different contributions to $\Ff_0^\gamma$ separately. We start by observing that since $|\gamma_1| + |\gamma_2|\le 4$, at least one of the two multi-indices has length less or equal than 2. We call this index $\gamma_j$ and will place the factor carrying this index in $L^\infty$, the other one in $L^2$.
	
	\smallskip
	\textit{1. The $\partial_yu^{\gamma_1}\cdot\partial_yW^{\gamma_2}$ term}: we use the pointwise bound \eqref{dbarwext} and the energy bound \eqref{boot_energy} and obtain that there exists $l\in L^1([2,T_0])$ such that
	\[
	\left\|(t+r)\partial_yu^{\gamma_1}\cdot\partial_yW^{\gamma_2}\right\|_{L^2_{xy}(\Sext_t)}\lesssim C_0\ep \sqrt{l(t)}\left\| \partial_y\q Z^{\le 4}W\right\|_{L^2_{xy}(\Sext_t)}\lesssim C_0^2\ep^2l(t).
	\]
	
	\smallskip
	\textit{2. The null terms}: here we use the null form representation via the formula \eqref{rewrite} and the relation $\op=t^{-1}Z$ to write
	\begin{equation}\label{ext_null}
		\mathbf{N}(W^{\gamma_1}, W^{\gamma_2}) = \frac{1}{t}Z W^{\gamma_1}\cdot\partial W^{\gamma_2} + \frac{1}{t}\partial W^{\gamma_1}\cdot Z W^{\gamma_2}+\frac{t-r}{t}\partial W^{\gamma_1}\cdot\partial W^{\gamma_2}.
	\end{equation}
	The first two products in the above right hand side are equivalent given the range of $\gamma_1$ and $\gamma_2$ so we will just analyze the first one. In the case where $|\gamma_1|\le 2$ we deduce from the pointwise bound \eqref{zwext} that 
	\[
	\left\| (t+r)t^{-1}Z W^{\gamma_1}\cdot\partial W^{\gamma_2}\right\|_{L^2_{xy}(\Sext_t)}\lesssim C_0\ep  \left\| (t+r)r^{-1}t^{-1} \partial W^{\gamma_2} \right\|_{L^2_{xy}(\Sext_t)} \lesssim C_0\ep t^{-1}E^{\text{ex}, 0}_{4}(t, W)^\frac{1}{2}.
	\]
	In the case where $|\gamma_1|\ge 3$ we estimate $\partial W^{\gamma_2}$ with the pointwise bound \eqref{dwext} and decompose $W^{\gamma_1}$ using \eqref{dec}. On the one hand we apply the Poincar\'e inequality to obtain
	\[
	\left\|(t+r)t^{-1}Z \Wt^{\gamma_1}\cdot\partial W^{\gamma_2}\right\|_{L^2_{xy}(\Sext_t)}\lesssim C_0\ep  \left\| (t+r)r^{-1}t^{-1} \partial_yZ\Wt^{\gamma_1} \right\|_{L^2_{xy}(\Sext_t)}\lesssim C_0\ep t^{-1}E^{\text{ex}, 0}_{5}(t, W)^\frac{1}{2}
	\]
	and on the other hand we use corollary \ref{corhar} with $\beta=\alpha-1$ to get
	\[
	\begin{aligned}
		\left\|(t+r) t^{-1}Z W_0^{\gamma_1}\cdot\partial W^{\gamma_2}\right\|_{L^2_{xy}(\Sext_t)}& \lesssim 
		C_0\ep t^{-1}	\left\|(2+r-t)^{-\alpha+\frac{(\alpha-1)}{2}}Z W_0^{\gamma_1}\right\|_{L^2_{xy}(\Sext_t)} \\
		&\lesssim	C_0\ep t^{-1}\enext_{5}(t, W)^\frac{1}{2}.
	\end{aligned}
	\]
	The last quadratic term in the right hand side of \eqref{ext_null} is estimated using again \eqref{dwext} 
	\[
	\begin{aligned}
		\left\|(t+r)(t-r)/t \, \partial W^{\gamma_1}\cdot\partial W^{\gamma_2}\right\|_{L^2_{xy}(\Sext_t)}& \lesssim C_0\ep t^{-1}\left\|(2+r-t)^{\frac{1-\alpha}{2}}\partial \q Z^{\le 4}W\right\|_{L^2_{xy}(\Sext_t)} \\
		& \lesssim C_0 \ep t^{-1}E^{\text{ex},0}_{4}(t, W)^\frac{1}{2},
	\end{aligned}
	\]
	which concludes that
	\[
	\left\|\mathbf{N}(W^{\gamma_1}, W^{\gamma_2})\right\|_{L^2_{xy}(\Sext_t)}\lesssim C_0\ep t^{-1}\enext_{5}(t, W)^\frac{1}{2}.
	\]
	The combination of step 1 and step 2 with the energy bound \eqref{boot_energy} yields 
	\begin{equation}\label{est_F0_ext}
		\left\|(t+r)\Ff_0^\gamma\right\|_{L^2_{xy}(\Sext_t)}\lesssim C_0^2\ep^2 l(t) + C_0^2 \ep^2 t^{-1},
	\end{equation}
	which plugged in \eqref{conf_ext_Wgamma} for all $|\gamma|\le4 $ gives{
		\[
		\begin{aligned}
			\exec_{4}(T, W_0) &+ \sum_{|\gamma|\le 4}\int_{\q C_{[2, T]}} \hspace{-10pt} \left| (t+r)(\partial_t W^\gamma_0+\partial_r W^\gamma_0) + 2W^\gamma_0\right|^2+(t-r)^2(|\nabla W^\gamma_0|^2-(\partial_r W^\gamma_0)^2)\, d\sigma dt\\ 
			& \lesssim \exec_{4}(2,  W_0) + \int_2^T \left(C_0^2\ep^2 l(t) + C_0^2 \ep^2 t^{-1}\right) \exec_4(t, W_0)^\frac{1}{2} dt \\
			& \lesssim \exec_{4}(2,  W_0) + C_0^2\ep^2\sup_{[2, T_0]}\exec_4(t, W_0) +  C_0^2\ep^2 \ln T .
		\end{aligned}
		\]
		Therefore, assuming $\ep \ll1$ sufficiently small we get that \small{
			\[
			\begin{aligned}
				\sup_{[2, T_0]}\exec_4(t, W_0)  + \sum_{|\gamma|\le 4}\int_{\q C_{[2, T_0]}} \hspace{-10pt} \left| (t+r)(\partial_t W^\gamma_0+\partial_r W^\gamma_0) + 2W^\gamma_0\right|^2+(t-r)^2(|\nabla W^\gamma_0|^2-(\partial_r W^\gamma_0)^2)\, d\sigma dt \\
				\lesssim \exec_{4}(2,  W_0) +  C_0^2\ep^2 \ln T_0 
			\end{aligned}
			\]}
		so the result of the proposition  follows from the smallness of the conformal energy at the initial time, which in turn follows from the assumptions on the initial data.}
	\endproof
	
{	
		\begin{lemma}\label{Lem:conf_ext_hyp}
			Let $s\ge 2$ and $2<T_1<T_2$ be such that the portion of hyperboloid $\q H_s$ in the time strip $[T_1, T_2]$ is entirely contained in the exterior region $\dext$. Assume $W=(u,v)^T$ is the solution to the Cauchy problem \eqref{eq_W}-\eqref{data_W} in $\dext$ satisfying the global energy bounds \eqref{boot_energy}. Then there exists constants $C_1, C_2>0$ such that
			\[
			\sum_{|\gamma|\le 4}\int_{\q H_s\cap [T_1, T_2]} \frac{1}{t^2}\left|K W^\gamma_0 + 2t W^\gamma_0\right|^2 + \frac{s^2}{t^2}\sum_{i=0}^3\left|\Omega_{0j} W^\gamma_0\right|^2\, dx \le  C_1C_0^2\ep^2 \ln T_1 + C_2C_0^2\ep^2\ln \left(\frac{T_2}{T_1}\right)
			\]
		\end{lemma}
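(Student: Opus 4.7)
The plan is to apply Proposition \ref{prop: twisted_conformal} to each differentiated zero-mode $W_0^\gamma$ for $|\gamma|\le 4$, and to combine the resulting inequality with estimates on the initial conformal energy at time $T_1$ and on the inhomogeneous source term, both of which are already available from the proof of Proposition \ref{prpconfext}. The geometric hypothesis that $\q H_s\cap [T_1,T_2]\subset \dext$ translates into $\sqrt{t^2-s^2}\ge t-1$ for every $t\in [T_1,T_2]$, and hence into the inclusion $\Sigma^s_t\subset \Sext_t$ for all such $t$. This inclusion is the crucial observation: it allows one to inherit all the exterior pointwise and source estimates on the truncated hypersurfaces $\Sigma^s_t$.

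First I would commute $\q Z^\gamma$ with \eqref{eq_W} and integrate in $y$, so that $W_0^\gamma$ solves the linear inhomogeneous wave equation \eqref{syst_u0} with source $\mathbf{F}_0^\gamma$ given by \eqref{nonlin_0}. Applying Proposition \ref{prop: twisted_conformal} with $\Wf_0 = W_0^\gamma$ and $\Ff_0=\mathbf{F}_0^\gamma$ gives
\begin{align*}
 \q E^{\text{c,s}}(T_2, W_0^\gamma) &+ \int_{\q H_s\cap [T_1, T_2]} \frac{1}{t^2}|KW_0^\gamma+2tW_0^\gamma|^2 + \frac{s^2}{t^2}\sum_{j=0}^3|\Omega_{0j}W_0^\gamma|^2\, dx \\
 &\le \q E^{\text{c,s}}(T_1, W_0^\gamma) + \int_{T_1}^{T_2}\|(t+r)\mathbf{F}_0^\gamma\|_{L^2_x(\Sigma^s_t)}\, \q E^{\text{c,s}}(t, W_0^\gamma)^{1/2}\, dt.
\end{align*}

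Next I would control the initial term via $\Sigma^s_{T_1}\subset \Sext_{T_1}$ to write $\q E^{\text{c,s}}(T_1, W_0^\gamma) \le \ecex(T_1, W_0^\gamma)$, which is bounded by $C_0^2\ep^2\ln T_1$ thanks to Proposition \ref{prpconfext}. For the source, since $\Sigma^s_t\subset \Sext_t$ the estimate \eqref{est_F0_ext} derived in the proof of that same proposition applies verbatim, yielding $\|(t+r)\mathbf{F}_0^\gamma\|_{L^2_x(\Sigma^s_t)}\lesssim C_0^2\ep^2 l(t) + C_0^2\ep^2 t^{-1}$ with $l\in L^1([2,\infty))$. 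Summing over $|\gamma|\le 4$, one arrives at an integral inequality of the form
\[
 \q E^{\text{c,s}}_{4}(T_2, W_0) + \text{boundary} \le C C_0^2\ep^2 \ln T_1 + \int_{T_1}^{T_2}C_0^2\ep^2(l(t)+t^{-1})\, \q E^{\text{c,s}}_{4}(t, W_0)^{1/2}\, dt.
\]

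Finally I would run a Gronwall-type argument parallel to the one already used to prove Proposition \ref{prpconfext}: applying Young's inequality to decouple $\|(t+r)\mathbf{F}_0^\gamma\|_{L^2}$ from $\q E^{\text{c,s}}_4(t)^{1/2}$ and absorbing a small multiple of the energy on the left via the smallness of $C_0\ep$, the two time-integrated weights collapse to $\|l\|_{L^1([T_1,T_2])}\lesssim 1$ and $\int_{T_1}^{T_2} t^{-1}\, dt=\ln(T_2/T_1)$, and this produces precisely the sum $C_1 C_0^2\ep^2\ln T_1 + C_2 C_0^2\ep^2\ln(T_2/T_1)$. The main delicate point is organizing the Young splitting so that the two logarithmic contributions remain additive rather than being combined multiplicatively or quadratically; this is exactly the mechanism already used in Proposition \ref{prpconfext} and carries over here without modification.
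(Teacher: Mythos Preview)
Your proposal is correct and follows essentially the same approach as the paper's own proof: apply Proposition~\ref{prop: twisted_conformal} to $W_0^\gamma$, use the inclusion $\Sigma^s_t\subset\Sext_t$ coming from the hypothesis $\q H_s\cap[T_1,T_2]\subset\dext$, and invoke the source estimate \eqref{est_F0_ext} together with the exterior conformal energy bound \eqref{est_conf_enex}. The paper's proof is a one-line summary of precisely these ingredients; your write-up simply fills in the details.
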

		\proof
		The result follows from the application of proposition \ref{prop: twisted_conformal} with $\Wf_0=\q Z^\gamma W$ with $|\gamma|\le 4$, from the observation that $\Sigma^2_t \subset \Sext_t$ for all $t \in [T_1, T_2]$
		and from the estimates \eqref{est_F0_ext} and \eqref{est_conf_enex}.
		\endproof}

	\subsection{A sharper pointwise bound for the non-zero modes}
	
	The scope of this section is to show that the pointwise bound \eqref{dbarwext} for $\partial_y\q Z^{\le 2}W$ and $\q Z^{\le 2}\Wt$  obtained by Sobolev's injections can be improved and that one can replace $\sqrt{l(t)}$ by the explicit decay in time $t^{-1/2}$. 
	
	\begin{proposition}
		We have the following pointwise estimate in the exterior region $\dext$
		\[
		\sup_{\m S^1}|\partial_y \q Z^{\leq 2}W| +  |\q Z^{\le 2}\Wt| \lesssim C_0\ep r^{-1}t^{-\frac{1}{2}}(2+r-t)^{-\frac{\alpha}{2}}.
		\]
	\end{proposition}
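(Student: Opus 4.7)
The starting point is Proposition~\ref{prop:est_enex_hyp}, which provides the uniform-in-$s$ bound $E^{\text{ex},h}_5(s,W)\lesssim C_0^2\epsilon^2$ for all $s\ge 2$, and in particular the control $\|\partial_y\mathcal{Z}^{\le 5}W\|_{L^2(\mathcal{H}^{\text{ex}}_s)}\lesssim C_0\epsilon$. I would combine this with a Klainerman--Sobolev inequality tailored to exterior hyperboloids in order to convert the $L^2_t$-integrable factor $\sqrt{l(t)}$ in \eqref{dbarwext} into the pointwise $t^{-1/2}$ decay.

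First I would establish a Sobolev-type embedding on $\mathcal{H}^{\text{ex}}_s$ of the form
\[
\sup_{\mathcal{H}^{\text{ex}}_s} t^3\,|w(t,x,y)|^2 \;\lesssim\; \sum_{|J|\le 2}\iint_{\mathcal{H}^{\text{ex}}_s} |\mathcal{Z}^J w|^2\,dxdy,
\]
by parametrizing $\mathcal{H}^{\text{ex}}_s$ as $(r,\sigma,y)\mapsto(\sqrt{s^2+r^2},r\sigma,y)$ for $r\ge(s^2-1)/2$, integrating radially from $r$ to $+\infty$ as in the proofs of Lemmas~\ref{lemma_wsi}--\ref{lemma_wsi2} (the integration naturally stays inside $\mathcal{H}^{\text{ex}}_s$), applying the Sobolev embedding $H^2(\mathbb{S}^2\times\mathbb{S}^1)\subset L^\infty$ in $(\sigma,y)$, and using that $r\ge t-1\ge t/2$ on $\mathcal{H}^{\text{ex}}_s$ (the boundary $r=(s^2-1)/2$ lies on the cone $t=r+1$, and $t-r$ only decreases further outward). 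Applied to $w=\partial_y\mathcal{Z}^{\le 2}W$, together with Proposition~\ref{prop:est_enex_hyp} and the fact that $\partial_y$ commutes with every $\mathcal{Z}\in\mathcal{Z}$, this yields
\[
|\partial_y\mathcal{Z}^{\le 2}W(t,x,y)|\;\lesssim\; C_0\epsilon\,t^{-3/2}\;\lesssim\; C_0\epsilon\,r^{-1}t^{-1/2}
\]
at every point of $\mathcal{H}^{\text{ex}}_s$. The Poincar\'e inequality on $\mathbb{S}^1$ applied to the zero-mean field $\tilde W$ delivers the same control for $|\mathcal{Z}^{\le 2}\tilde W|$. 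Since $r-t\in[-1,0]$ on any $\mathcal{H}^{\text{ex}}_s$, the weight $(2+r-t)^{-\alpha/2}$ is a harmless multiplicative constant there, and the desired estimate follows on the subregion $\mathcal{D}^{\text{ex}}\cap\{t^2-r^2\ge 4\}$.

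Second, for the remaining subregion $\mathcal{D}^{\text{ex}}\cap\{t^2-r^2<4\}$, which up to a small-time sliver handled by local well-posedness reduces essentially to the far exterior $\{r>t\}$ (plus a thin strip near $\{r=t\}$), I would propagate the estimate outward by integrating the good derivative $\partial_t+\partial_r$ of $\partial_y\mathcal{Z}^{\le 2}W$ along the outgoing null lines $\{t-r=\mathrm{const}\}$, starting either from the boundary $\{t^2-r^2=4\}$ (where the first step applies) or from the initial slice $\{t=2\}$ (where the polynomial decay of the data suffices). The weight $2+r-t$ is preserved along these curves, and $|\partial_t+\partial_r|\,|\partial_y\mathcal{Z}^{\le 2}W|$ is dominated by a component of $|\mathcal{T}\mathcal{Z}^{\le 2}W|$, whose $L^2_t$ norm along the null line is controlled by the strong exterior norm $\|W\|_{X^{5,\alpha}_\infty}$ via Cauchy--Schwarz, producing the sharp $t^{-1/2}$ decay without any spurious loss.

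\emph{The main obstacle} lies in this second step: the Klainerman--Sobolev approach on hyperboloids does not reach the far exterior $\{r>t\}$, and one must carefully exploit the fact that $2+r-t$ is constant along outgoing null curves, together with the $L^2_t$ nature of the control on $\mathcal{T}\mathcal{Z}^{\le 2}W$, to propagate the pointwise bound outward while preserving both the $t^{-1/2}$ factor and the $(2+r-t)^{-\alpha/2}$ weight.
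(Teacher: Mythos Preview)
Your step 1 contains a genuine gap: the Sobolev embedding you claim on $\mathcal{H}^{\text{ex}}_s$ with prefactor $t^3$ is false. The radial-integration-plus-angular-Sobolev argument of Lemmas~\ref{lemma_wsi}--\ref{lemma_wsi2} yields only the factor $r^2$ coming from the area element $r^2\,dr\,d\sigma$, and since $r\sim t$ on $\mathcal{H}^{\text{ex}}_s$ this gives at most $t^2$, not $t^3$. (The interior Klainerman--Sobolev of Lemma~\ref{Lem:KS} earns $t^{-3}$ precisely because it integrates over a ball of radius $\sim t$ in all three spatial directions; on $\mathcal{H}^{\text{ex}}_s$ the available ``thickness'' in the direction transverse to the cone is bounded, since $t-r\le 1$ there, so no extra power of $t$ can be gained.) Consequently your argument only produces $|\partial_y\mathcal{Z}^{\le 2}W|\lesssim C_0\epsilon\, t^{-1}$ on $\mathcal{H}^{\text{ex}}_s$, which falls short of the required $r^{-1}t^{-1/2}$ by a full factor $t^{1/2}$. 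Your step 2 then inherits this loss at its starting point on $\{t^2-r^2=4\}$.

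The paper takes an entirely different route that avoids exterior hyperboloids. It first reduces, via Lemma~\ref{lemma_wsi2}, to proving
\[
I(t):=\iint_{\Sext_t}(2+r-t)^\alpha|\partial_y\mathcal{Z}^{\le 5}W|^2\,dxdy\;\lesssim\;C_0^2\epsilon^2\,t^{-1}.
\]
The key observation is that $I\in L^1([2,\infty))$ by the definition of the $X^{5,\alpha}$ norm, so on each dyadic interval $[2^k,2^{k+1}]$ there is a time $\tau_k$ with $I(\tau_k)\lesssim 2^{-k}$. One then controls $|I(t)-I(\tau_k)|$ by the fundamental theorem of calculus: differentiating $I$ in $t$ via the weighted identity \eqref{weighted_div} and estimating the resulting source terms (as in the proof of Proposition~\ref{prop:boot_ext}) shows $|\partial_t I|\lesssim C_0^3\epsilon^3(t-1)^{-1}l(t)$, which is integrable over the dyadic block and gives $|I(t)-I(\tau_k)|\lesssim C_0^3\epsilon^3 t^{-1}$. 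This dyadic-pigeonhole-plus-equation argument converts the $L^1_t$ control into the pointwise $t^{-1}$ bound on $I(t)$, and hence the desired $t^{-1/2}$ decay after Sobolev.
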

	\proof
	Thanks to lemma \ref{lemma_wsi2} and the Poincar\'e inequality we have that
	\[
	(2+r-t)^\alpha r^2 \sup_{\m S^1}\left(|\partial_y \q Z^{\leq 2}W|^2 + |\q Z^{\le 2}\Wt| \right)\leq \iint_{\Sext_t} (2+r-t)^\alpha    |\partial^{\leq 1}\partial_y \q Z^{\leq 4}W|^2   \, dxdy ,
	\]
	so it will be enough to prove that
	\[
	I(t):=\iint_{\Sext_t} (2+r-t)^\alpha    |\partial_y \q Z^{\leq 5}W|^2   \, dxdy \lesssim C_0^2\ep^2 t^{-1}, \quad t\in [2, \infty).
	\]
	We fix $t\in [2, \infty)$ and let $k\in \mathbb{Z}$ be such that $t\in [2^k, 2^{k+1})$.
	Since $I\in L^1([2, \infty))$ there exists $\tau_k\in [2^k, 2^{k+1}]$ such that
	\[
	I(\tau_k)\lesssim 2^{-k}\lesssim t^{-1}.
	\]
	Observe that in the above inequalities the implicit constants are independent of $k$. We can assume $\tau_k<t$ (the case $\tau_k>t$ being similar) and use the fundamental theorem of calculus to write that
	\[
	|I(t)-I(\tau_k)|\le \int_{\tau_k}^k |\partial_tI(s)|\, ds.
	\]
	By integration over $\Sext_t$ of the equality \eqref{weighted_div} with $\Wf = \partial_y\q Z^{\le 4}W$ and $\omega(z)=(1+z)^\alpha$ and the smallness of $u$ given by \eqref{wext} we derive that
	\[
	|\partial_tI(t)|\lesssim \iint_{\Sext_t} (2+r-t)^\alpha\left|\partial_yu \partial_t \Wf \partial_y \Wf - \frac{1}{2}\partial_tu (\partial_y\Wf)^2\right| + (2+r-t)^\alpha \Ff^{\le 5}\partial_t \Wf\, dxdy
	\]
	where $\Ff^{\le 5}=\sum_{|\gamma|\le 5}\Ff^{\gamma}$ and $\Ff^\gamma$ is the nonlinear term appearing in the equation satisfied by $\mathcal{Z}^{\gamma}W$ given by \eqref{nonlin_vf}. 
	On the one hand, the pointwise bounds \eqref{dwext} and \eqref{dbarwext} and the energy bound \eqref{boot_energy} imply that
	\[
	\iint_{\Sext_t} (2+r-t)^\alpha\left|\partial_yu \partial_t \Wf \partial_y \Wf - \frac{1}{2}\partial_tu (\partial_y\Wf)^2\right| \, dxdy \lesssim C^3_0\ep^3 (t-1)^{-1}l(t).
	\]
	On the other hand, from the proof of proposition \ref{prop:boot_ext} it follows that $\Ff^{\le 5} = \Ff^a + \Ff^b$, where $\Ff^a$ contains the terms analyzed in case 1 and $2.a$ such that 
	\[
	\|(2+r-t)^{\frac{\alpha+1}{2}}\Ff^a\|_{L^2_{xy}}\lesssim C_0^2\ep^2 (t-1)^{-1}\sqrt{l(t)} 
	\]
	and $\Ff^b$ contains the products treated in case $2.b$ that only satisfy
	\[
	\left\|(2+r-t)^\frac{\alpha+1}{2} \Ff^b\right\|_{L^2_{xy}}\lesssim C_0^2\ep^2(t-1)^{-\frac{\alpha+1}{2}}\sqrt{l(t)}.
	\]
	The estimate for $\Ff^b$ with the weaker weight $(2+r-t)^{\frac{\alpha}{2}}$ can actually be improved  to the following one, obtained using Sobolev injections and the pointwise bound \eqref{dbarwext}
	\[
	\begin{aligned}
		\left\| (2+r-t)^\frac{\alpha}{2} \Ff^b\right\|_{L^2_{xy}}^2 & \lesssim	  \left\|(2+r-t)^{\frac{\alpha+1}{2}}\partial \mathcal{Z}^{\le 5}u\right\|^2_{L^2_{xy}(\Sext_t)} \int_{t-1}^\infty r^{-2}   \left\|\partial_y \mathcal{Z}^{\le 5}W\right\|^2_{L^2(\m S^2\times \m S^1)}\, r^2dr \\
		& \lesssim C^4_0 \ep^4 (t-1)^{-2}l(t).
	\end{aligned}
	\]
When $\Wf = \partial_y \q Z^{\le 5}$ this yields
\begin{align*}\iint_{\Sext_t} (2+r-t)^\alpha \Ff^{\le 5}\partial_t \Wf\, dxdy
	\lesssim& C^2_0\ep^2 (t-1)^{-1}\sqrt{l(t)} \left\| (2+r-t)^\frac{\alpha}{2}\partial_y \q Z^{\leq 5}W\right\|_{L^2_{x,y}}\\
	 \lesssim & C^3_0\ep^3 (t-1)^{-1}l(t),
	\end{align*}
	which finally allows us to conclude that
	\[
	|I(t)-I(\tau_k)|\lesssim \int_{\tau_k}^t C_0^3\ep^3 (s-1)^{-1}l(s) \, ds\lesssim C_0^3\ep^3 t^{-1}.
	\]
	\endproof

	\section{Pointwise Estimates in the Interior Region}\label{PointwiseEst}
	
	The goal of this section is to recover pointwise estimates for the solution $W=(u,v)^T$ to \eqref{eq_W} in the interior hyperbolic region $\hin_{[2, s_0]}$ under the a-priori assumptions \eqref{booten1}-\eqref{bootZ} and to propagate the a-priori pointwise estimate \eqref{bootZ} on $ZW_0$.
	
	We remind the reader that we proved uniform-in-time energy bounds for the solution on the exterior truncated hyperboloids $\hex_s$ (see proposition \ref{prop:est_enex_hyp}) as well as the exterior pointwise bound \eqref{Z0_ext} for $ZW_0$, therefore if we suppose that $A, B\gg C_0$ we can think of the a-priori bounds \eqref{booten1}-\eqref{bootZ} as being valid not only on $\hin_s$ but on the whole hyperboloid $\q H_s$ for any $s\in [2, s_0]$. The pointwise estimates we will obtain in this section will then be valid in the whole hyperbolic strip between $\q H_2$ and $\q H_{s_0}$, that we denote by $\q H_{[2, s_0]}$
	\[
	\q H_{[2, s_0]} :=\left\{(t,x) : 4\le t^2-r^2\le s_0^2\right\}\times \m S^1.
	\]
	
	\subsection{Pointwise estimates from Klainerman-Sobolev inequalities}
	A first subset of pointwise estimates for $W_0$ and $\Wt$ is immediately obtained from \eqref{booten1} and \eqref{booten2} via the following Sobolev inequality on hyperboloids, whose proof can be found in \cite{lefloch-ma:global_stability_2}.
	\begin{lemma}\label{Lem:KS}
		Let $\Wf = \Wf(t,x)$ be a sufficiently regular function in the cone $\q C = \{t>r\}$. For all $(t, x)\in \q C$, let $s= \sqrt{t^2-r^2}$ and $B(x, t/3)$ be the ball centered at $x$ with radius $t/3$. Then
		\[
		|\Wf(t,x)|^2\le C t^{-3} \sum_{|\gamma|\le 2} \int_{B(x, t/3)} \left|Z^\gamma \Wf(\sqrt{s^2+|y|^2}, y)\right|^2 \, dy
		\]
		where $C$ is a positive universal constant and $Z_j = x_j\partial_t + t\partial_j$, $j=\overline{1,3}$. 
	\end{lemma}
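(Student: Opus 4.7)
\medskip
The plan is to reduce the statement to a standard Sobolev embedding on a unit ball in $\mathbb{R}^3$ by parametrizing the hyperboloid $\q H_s$ by its spatial projection and exploiting the crucial identity that $\partial_{y_j}$ on the hyperboloid equals $t^{-1}Z_j$.

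First I would introduce the restriction
\[
\widetilde{\Wf}(y) := \Wf\!\left(\sqrt{s^{2}+|y|^{2}},\, y\right),\qquad y\in\mathbb{R}^{3},
\]
and verify the chain-rule identity
\[
\partial_{y_{j}}\widetilde{\Wf}(y)=\frac{1}{\tau(y)}\,(Z_{j}\Wf)\!\left(\tau(y),y\right),\qquad \tau(y):=\sqrt{s^{2}+|y|^{2}},
\]
so that a single flat derivative of $\widetilde{\Wf}$ corresponds exactly to $\tau^{-1}$ times a boost of $\Wf$. Iterating gives, for $|\beta|\le 2$,
\[
\bigl|\partial_{y}^{\beta}\widetilde{\Wf}(y)\bigr|\;\lesssim\;\tau(y)^{-|\beta|}\sum_{|\gamma|\le|\beta|}\bigl|(Z^{\gamma}\Wf)(\tau(y),y)\bigr|,
\]
where the $\tau$-derivatives produced by commuting $\partial_{y_k}$ past $\tau^{-1}$ only contribute lower-order terms that are harmless since $|y|\le\tau(y)$.

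Next I would run the standard rescaling argument. For $(t,x)$ fixed in $\q C$ with $s=\sqrt{t^{2}-r^{2}}$, define $F(z):=\widetilde{\Wf}\bigl(x+(t/3)z\bigr)$ on the unit ball $B(0,1)\subset\mathbb{R}^{3}$. Since $H^{2}(B(0,1))\hookrightarrow L^{\infty}(B(0,1))$, one has
\[
|F(0)|^{2}\;\lesssim\;\sum_{|\beta|\le 2}\int_{B(0,1)}|\partial_{z}^{\beta}F(z)|^{2}\,dz,
\]
and unwinding the scaling via $y=x+(t/3)z$ (so $dz=(3/t)^{3}\,dy$ and each $z$-derivative brings out a factor of $t/3$) yields
\[
|\widetilde{\Wf}(x)|^{2}\;\lesssim\; t^{-3}\sum_{|\beta|\le 2}\int_{B(x,t/3)} t^{2|\beta|}\,|\partial_{y}^{\beta}\widetilde{\Wf}(y)|^{2}\,dy.
\]
Combining this with the derivative-conversion estimate from the first step will produce the desired inequality, provided one can replace the varying factor $\tau(y)^{-|\beta|}$ by the constant $t^{-|\beta|}$ up to a universal constant.

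The only point requiring care -- and the main technical obstacle -- is precisely this comparison of $\tau(y)$ with $t=\tau(x)$ throughout $B(x,t/3)$. I would establish $\tau(y)\sim t$ uniformly on that ball by writing
\[
\tau(y)^{2}-t^{2}=|y|^{2}-|x|^{2}=(|y|-|x|)(|y|+|x|),
\]
bounding $\bigl||y|-|x|\bigr|\le t/3$, and using $|x|<t$ to conclude that $\tau(y)^{2}\in[c_{1}t^{2},c_{2}t^{2}]$ for absolute constants $0<c_{1}<c_{2}$. This absorbs all the $\tau(y)^{-|\beta|}$ into the universal constant $C$, and the stated inequality follows. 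The only subtlety is when $r$ is close to $t$ (i.e.\ $s$ is small compared to $t$), but even there the bound $|x|<t$ forces $|y|<4t/3$, keeping $\tau(y)$ comparable to $t$.
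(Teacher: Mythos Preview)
Your proof is correct and complete. The paper itself does not give a proof of this lemma --- it simply refers the reader to LeFloch--Ma \cite{lefloch-ma:global_stability_2} --- and your argument is precisely the standard one found there: parametrize the hyperboloid $\q H_s$ by its spatial projection, convert flat derivatives $\partial_{y_j}$ into $\tau^{-1}Z_j$ via the chain rule, rescale to the unit ball, apply the $H^2\hookrightarrow L^\infty$ embedding, and finally use $\tau(y)\sim t$ on $B(x,t/3)$ to absorb the variable weights. Your verification of this last point (the only genuinely delicate step) via $\tau(y)^2-t^2=|y|^2-|x|^2$ and $|x|<t$, $||y|-|x||\le t/3$ is exactly what is needed.
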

	
	\begin{lemma} \label{Lem:KS_estimates}
		Let $\q I_{n,k}$ denote the set of multi-indices of type $(n,k)$.
		Under the a-priori energy bounds \eqref{booten1} and \eqref{booten2} we have the following pointwise estimates in $\q H_{[2, s_0]}$
		\begin{align}
			\label{ks1}
			& |\partial \mathcal{Z}^{\le 3}W_0(t,x)|\lesssim\epsilon t^{-\frac{1}{2}}s^{-1}, \\
			\label{ks2}
			&	|\op \mathcal{Z}^{\le 3}W_0(t,x)|\lesssim\ep t^{-\frac{3}{2}}, \\
			\label{ks3}
			\sum_{\q I_{3,k}}	\left\|\mathcal{Z}^\gamma \Wt(t,x,\cdot)\right\|_{L^\infty(\m S^1)} &+ \left\|\partial^{\le 1}_y\mathcal{Z}^\gamma\Wt(t,x,\cdot)\right\|_{L^2(\m S^1)}\lesssim \ep t^{-\frac{3}{2}}s^{\delta_{k+2}}, \qquad k=\overline{0,3} \\
			\label{ks3.1}
			& \hspace{-2cm}\sum_{|\gamma|=3} \left\| \partial_{tx}\q Z^\gamma \Wt(t,x,\cdot)\right\|_{L^2(\m S^1)}\lesssim \ep t^{-\frac{1}{2}}s^{-1+\delta_5},
		\end{align}
		and
		\begin{align}
			\label{esttan} \sum_{\q I_{2,k}} \left\|\bar{\partial} \partial^{\le 1}_y \mathcal{Z}^{\gamma} \Wt(t,x,\cdot) \right\|_{L^2(\m S^1)}&\lesssim \ep t^{-\frac{5}{2}}s^{\delta_{k+3}}, \qquad k=\overline{0,2} \\
			\label{esttan2}\sum_{\q I_{1,k}} \left\|\bar{\partial}^2 \partial^{\le 1}_y \mathcal{Z}^{\gamma} \Wt(t,x,\cdot) \right\|_{L^2(\m S^1)}&\lesssim \ep t^{-\frac{7}{2}}s^{\delta_{k+4}}, \qquad k=\overline{0,1}\\
			\label{esttan3} \left\|\bar{\partial}^2 \partial_{tx} \mathcal{Z} \Wt(t,x,\cdot) \right\|_{L^2(\m S^1)} & \lesssim \ep t^{-\frac{5}{2}}s^{-1+\delta_5}.
		\end{align}
		Moreover
		\begin{equation} \label{ks4}
			\sup_{\q H_{[2, s_0]}}|W|  \lesssim \ep.
		\end{equation}
	\end{lemma}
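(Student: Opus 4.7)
The plan is to derive \eqref{ks1}--\eqref{ks4} by a single mechanism: combine the Klainerman--Sobolev inequality on hyperboloids (Lemma \ref{Lem:KS}) with Sobolev embedding on $\m S^1$, and then recognize the resulting $L^2$ integrand over $\hin_s$ as a component of one of the energies controlled by the bootstrap assumptions \eqref{booten1}--\eqref{booten2}. Two further tools will be used freely: the identity $\op_j = t^{-1}\Omega_{0j}$, which trades one power of $t^{-1}$ for an additional Klainerman vector field, and the fact that commuting a Klainerman field with $\partial_{tx}$, $\partial_y$ or $\op$ produces only terms of the same or lower order.

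For the $W_0$ bounds, applying Lemma \ref{Lem:KS} to $\partial\mathcal{Z}^{\le 3}W_0$ yields a majorant of the form $t^{-3}\int_{\hin_s}|Z^{\le 2}\partial\mathcal{Z}^{\le 3}W_0|^2\,dx$. After commuting the $Z$'s past $\partial$ this is bounded by $t^{-3}\int_{\hin_s}|\partial\mathcal{Z}^{\le 5}W_0|^2\,dx$, which is in turn controlled by $t^{-3}(t/s)^2 \enint_5(s,W_0) \lesssim \ep^2 t^{-1}s^{-2}$ by \eqref{booten1}, giving \eqref{ks1}. For \eqref{ks2} one uses $\op = t^{-1}\Omega_{0\cdot}$, converting the question into a Klainerman--Sobolev bound on $\Omega_{0\cdot}\mathcal{Z}^{\le 3}W_0$; the relevant integrand is then $|\op\mathcal{Z}^{\le 5}W_0|^2$, which is exactly a term in the definition \eqref{enint} of $\enint$, and the conclusion follows from $\enint_5(s,W_0) \lesssim \ep^2$.

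For the $\Wt$-estimates, the additional ingredient is Poincar\'e--Sobolev on $\m S^1$: since $\Wt$ has vanishing mean in $y$, one has $\|f\|_{L^\infty(\m S^1)} \lesssim \|\partial_y f\|_{L^2(\m S^1)}$ applied with $f = \mathcal{Z}^\gamma\Wt$, so \eqref{ks3} reduces to a bound on $\|\partial^{\le 1}_y\mathcal{Z}^\gamma\Wt(t,x,\cdot)\|_{L^2_y}$. Applying Lemma \ref{Lem:KS} with the $L^2_y$ norm placed inside (by Fubini) and commuting, the hyperboloidal integrand becomes $\|\partial^{\le 1}_y Z^{\le 2}\mathcal{Z}^\gamma\Wt\|^2_{L^2_y}$; when $\gamma$ is of type $(3,k)$ the composed operator $Z^{\le 2}\mathcal{Z}^\gamma$ is of type $(5,k+2)$, and the bootstrap bound \eqref{booten2} delivers $\enint_{5,k+2}(s,\Wt) \lesssim \ep^2 s^{2\delta_{k+2}}$, yielding \eqref{ks3}. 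Estimate \eqref{ks3.1} is identical except that the relevant piece of the energy is the $(s/t)^2|\partial_{tx}|^2$ contribution, producing an extra $(t/s)^2$ factor. The tangential estimates \eqref{esttan}--\eqref{esttan3} follow by rewriting each $\bar\partial$ as $t^{-1}Z$, which trades one $t^{-1}$ for an extra Klainerman field, thereby reducing each case to one already treated with the $k$-index shifted appropriately.

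Finally, \eqref{ks4} follows by integrating \eqref{ks1} for $\partial_{tx}W_0$ along an incoming ray from the initial hyperboloid $\q H_2$, where $|W|\lesssim \ep$ is inherited from the data, combined with the $\Wt$-part of \eqref{ks3} at $k=0$ (and Poincar\'e in $y$). The only real obstacle in this argument is bookkeeping: one must carefully track the $(n,k)$-type of the multi-indices through each application of $Z^{\le 2}$, each substitution $\op\leftrightarrow t^{-1}Z$, and each commutator, to ensure that the energy finally invoked has total order at most $5$ and a Klainerman-count producing the exact $\delta$-exponent asserted in each line of the statement.
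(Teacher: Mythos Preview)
Your proposal is correct and follows the same mechanism as the paper's proof: Klainerman--Sobolev on hyperboloids combined with the bootstrap energy bounds, the identity $\op=t^{-1}Z$ to pass between tangential derivatives and Klainerman fields, and an integration argument for the undifferentiated bound \eqref{ks4}. The paper's own proof is extremely terse (it simply declares \eqref{ks1}--\eqref{ks3.1} ``immediate consequence of Lemma \ref{Lem:KS} and \eqref{booten1}--\eqref{booten2}'' and then spells out only the integration for \eqref{ks4}), so your write-up is in fact a faithful expansion of what the paper leaves implicit, including the type-$(n,k)$ bookkeeping.

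The one place where you deviate slightly is the integration path for \eqref{ks4}: the paper fixes $x$ and integrates in the hyperbolic time $s$ along the curve $\tau\mapsto(\sqrt{\tau^2+|x|^2},x)$, using $\partial_\tau\tilde W_0=(\tau/t)\partial_t W_0$ and the bound \eqref{ks1} to get $\int_2^s t^{-3/2}\,d\tau<\infty$, whereas you speak of integrating ``along an incoming ray''. Both choices lead to a convergent integral against the decay in \eqref{ks1}, so this is a cosmetic difference rather than a substantive one.
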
	
	\proof
	The estimates \eqref{ks1} to \eqref{ks3.1} are immediate consequence of lemma \ref{Lem:KS} and \eqref{booten1}-\eqref{booten2}. The estimates on the $\op$ derivatives of $\Wt$ are deduced from \eqref{ks3} and \eqref{ks3.1} using the fact that $\op = t^{-1}Z$. Finally, if we set $\tilde{W}_0(s,x)=W_0(\sqrt{s^2+r^2},x)$ we see that
	\[
	\begin{aligned}
		|\tilde{W}_0(s,x) - \tilde{W}_0(2,x)| &\le \int_2^s \left| \partial_\tau \tilde{W}_0 (\tau, x)\right|d \tau = \int_2^s \frac{\tau}{t}\left| \partial_t W_0(\sqrt{\tau^2+x^2}, x)\right| d\tau \lesssim \ep
	\end{aligned}
	\]
	and hence derive from the smallness of the initial data that
	\[
	|W_0(t,x)|\lesssim |\tilde{W}_0(s,x) - \tilde{W}_0(2,x)| + |\tilde{W}_0(2,x)|\lesssim \ep.
	\]
	The combination of the above estimates with \eqref{ks3} yields \eqref{ks4}.
	\endproof
	
	\subsection{Improved pointwise estimates on the non-zero modes}
	The bounds for the $\Wt$ obtained in lemma \ref{Lem:KS_estimates} via Sobolev embeddings are affected by the small growth in $s$ of the energies and are not sharp but they can be improved if one studies more closely the equation satisfied by $\Wt$. Enhancing such bounds, and in particular \eqref{ks3}, will be fundamental to propagate the a-priori pointwise bound \eqref{bootZ} later in proposition \ref{prpw0}.
	We make use of the following result, which is motivated by the work of Klainerman \cite{klainerman:global_existence} and whose proof is an adaptation of a similar estimate for Klein-Gordon equations initially proved in \cite{LeFloch-Ma:global_nl_stability}, later revisited in \cite{DW20} in the case of Klein-Gordon equations with variable mass.

	\begin{proposition}\label{lmkg}
		Assume $\Wf$ is a solution of the following equation
		\begin{equation}\label{eq_Wt}
			\Box_{x,y} \Wf+ u\Delta_y \Wf= \Ff, \qquad (t,x,y)\in \m R^{1+3}\times \m S^1
		\end{equation}
		such that $\int_{\m S^1} \Wf dy=0$. For every fixed $(t,x)$ in the cone $\q C = \{t>r\}$, let $s=\sqrt{t^2-r^2}$ and $Y_{tx}, A_{tx}, B_{tx}$ be the functions defined as follows
		\begin{equation}\label{Y}
			Y^2_{tx}(\lambda) := \int_{\m S^1} \lambda \left|\frac{3}{2}\Wf_\lambda+ (\scal\Wf)_\lambda \right|^2 +\lambda^{3} (1+u_\lambda)|\partial_y \Wf_\lambda|^2\, dy
		\end{equation}
		\begin{equation} \label{A}
			A_{tx}(\lambda) := \sup_{\m S^1}\left|\frac{1}{2\lambda}(\scal u)_\lambda\right| + \sup_{\m S^1}\left| \partial_yu_\lambda\right|
		\end{equation}
		\begin{equation}\label{B}
			B^2_{tx}(\lambda)= \int_{\m S^1} \lambda^{-1}\left|(R\Wf)_\lambda\right|^2\, dy
		\end{equation}
		where $f_\lambda(t,x,y)=f\left(\frac{\lambda t}{s},\frac{\lambda r}{s},y \right)$ and
		\[
		R\Wf(t,x,y) = s^2
		\bar{\partial}^i \bar{\partial}_i \Wf + x^i x^j \bar{\partial}_i \bar{\partial}_j \Wf + \frac{3}{4} \Wf + 3x^i \bar{\partial}_i \Wf -s^2\Ff.
		\]
		Then $\Wf$ satisfies the following inequality in the hyperbolic region $\q H_{[2, \infty)}$
		\[
		s^\frac{3}{2}\left(\|\Wf\|_{L^2(\m S^1)}+\|\partial_y \Wf\|_{L^2(\m S^1)}\right) + s^\frac{1}{2}\|\scal\Wf \|_{L^2(\m S^1)}\lesssim \left(Y_{tx}(2) +  \int_{2}^s B_{tx}(\lambda)d\lambda\right) e^{\int_{2}^s A_{tx}(\lambda)\, d\lambda}.
		\]
	\end{proposition}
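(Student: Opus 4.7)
The plan is to transport the equation along radial rays in the forward cone and derive a one-dimensional weighted $L^2_y$ Gronwall estimate for a carefully chosen profile, following the classical hyperboloidal Klainerman device for Klein-Gordon equations. Fixing $(t,x)$ with $s=\sqrt{t^2-r^2}\ge 2$ and parametrizing the ray from the origin through $(t,x)$ by $\gamma(\lambda) = (\lambda t/s,\lambda x/s)$ for $\lambda\in[2,s]$, one works in hyperbolic coordinates $(\lambda,\rho,\omega)$ defined by $t=\lambda\cosh\rho$, $x=\lambda\omega\sinh\rho$. In these coordinates the flat d'Alembertian reads $\Box_x = -\partial_\lambda^2 - \tfrac{3}{\lambda}\partial_\lambda + \lambda^{-2}\Delta_{H^3}$, where $\Delta_{H^3}$ is the intrinsic Laplacian on the hyperboloid $\q H_\lambda$, and the scaling field acts as $\scal = \lambda\partial_\lambda$. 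The natural rescaled unknown is the weighted profile
\[
V(\lambda,y) := \lambda^{3/2}\Wf_\lambda(y),
\]
for which direct differentiation yields $\partial_\lambda V = \lambda^{1/2}\big[\tfrac{3}{2}\Wf_\lambda + (\scal\Wf)_\lambda\big]$ and $\partial_y V = \lambda^{3/2}\partial_y\Wf_\lambda$, so that $Y^2_{tx}(\lambda) = \int_{\m S^1}\big(|\partial_\lambda V|^2 + (1+u_\lambda)|\partial_y V|^2\big)\, dy$.

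Plugging the hyperbolic form of $\Box_x$ into $\Box_{x,y}\Wf + u\partial_y^2\Wf = \Ff$ and multiplying the resulting equation by $\lambda^{3/2}$ gives the one-dimensional Klein-Gordon-like equation
\[
\partial_\lambda^2 V - (1+u_\lambda)\partial_y^2 V = \frac{\Delta_{H^3}\Wf_\lambda}{\sqrt\lambda} - \lambda^{3/2}\Ff_\lambda + \frac{3V}{4\lambda^2}.
\]
The crucial step is the algebraic identity
\[
\Delta_{H^3}f = s^2\op^i\op_i f + x^i x^j\op_i\op_j f + 3x^i\op_i f,
\]
verified by a short computation from $\op_i = \partial_i+(x^i/t)\partial_t$ (and easily checked on $f=t$, where both sides equal $3t$). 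Multiplying the previous ODE by $\sqrt\lambda$ and using this identity rewrites the right hand side compactly as $\lambda^{-1/2}(R\Wf)_\lambda$, and the equation for $V$ takes the clean form $\partial_\lambda^2 V - (1+u_\lambda)\partial_y^2 V = \lambda^{-1/2}(R\Wf)_\lambda$. This is exactly where the tangential structure of the hyperboloidal foliation enters: all spatial derivatives in the source are of the good type $\op_i = t^{-1}\Omega_{0i}$, whose $L^2_y$ norms on hyperboloids are the ones controlled by the interior energy $\enint$.

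The last step is a standard one-dimensional energy estimate. Pairing the ODE with $\partial_\lambda V$ in $L^2(\m S^1_y)$ and integrating by parts in $y$ yields
\[
\tfrac{1}{2}\tfrac{d}{d\lambda}Y^2_{tx}(\lambda) = \int_{\m S^1}\Big[\tfrac{1}{2}(\partial_\lambda u_\lambda)|\partial_y V|^2 - (\partial_y u_\lambda)\partial_\lambda V\,\partial_y V\Big]\, dy + \int_{\m S^1}\partial_\lambda V\cdot\frac{(R\Wf)_\lambda}{\sqrt\lambda}\, dy.
\]
Since $\partial_\lambda u_\lambda = \lambda^{-1}(\scal u)_\lambda$, the pointwise-in-$y$ prefactors inside the first bracket are exactly the two ingredients of $A_{tx}(\lambda)$; using the smallness of $u$ to make $Y^2_{tx}$ coercive in both $\partial_\lambda V$ and $\partial_y V$, this bracket integrates to $\lesssim A_{tx}(\lambda)\,Y^2_{tx}(\lambda)$, while Cauchy-Schwarz in $y$ bounds the last integral by $Y_{tx}(\lambda)B_{tx}(\lambda)$. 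Dividing through by $Y_{tx}$ produces the first-order differential inequality $\frac{d}{d\lambda}Y_{tx}(\lambda) \lesssim A_{tx}(\lambda)Y_{tx}(\lambda) + B_{tx}(\lambda)$, and Gronwall on $[2,s]$ yields exactly the exponential bound of the statement. To convert $Y_{tx}(s)$ into the left hand side of the proposition I would use the zero-mean Poincar\'e inequality $\|V\|_{L^2_y}\le\|\partial_y V\|_{L^2_y}$ (valid because $\int_{\m S^1}\Wf\, dy = 0$ forces $\int V\,dy=0$) to pass from $s^{3/2}\|\partial_y\Wf\|_{L^2_y}\lesssim Y_{tx}(s)$ to the same bound on $s^{3/2}\|\Wf\|_{L^2_y}$, and a triangle inequality to extract $s^{1/2}\|\scal\Wf\|_{L^2_y}$ from $s^{1/2}\|\tfrac{3}{2}\Wf+\scal\Wf\|_{L^2_y}\le Y_{tx}(s)$.

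The main obstacle is the algebraic identity expressing $\Delta_{H^3}$ in terms of the Minkowski boost derivatives $\op_i$: it is precisely what absorbs the diagonal zero-order term $3V/(4\lambda^2)$ of the ODE and produces the clean quantity $R\Wf$ on the right hand side. Once this identity is in hand, the zero-mean condition on $\Wf$ and the smallness of $u$ conspire to make $Y^2_{tx}$ a positive quadratic form in $V$ and $\partial_y V$, and the remainder is the familiar Gronwall scheme along the $\lambda$-variable; the resulting $\lambda^{3/2}$ decay is sharper than what any Sobolev embedding produces and is what will ultimately let the slow-growth a priori bounds \eqref{booten2} on the higher order interior energies of $\Wt$ be propagated in time.
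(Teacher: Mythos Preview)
Your proposal is correct and follows essentially the same route as the paper: define the rescaled profile $V(\lambda,y)=\lambda^{3/2}\Wf_\lambda$, derive the one-dimensional Klein--Gordon-type ODE with right hand side $\lambda^{-1/2}(R\Wf)_\lambda$, run the $L^2_y$ energy estimate to obtain $\tfrac{d}{d\lambda}Y_{tx}\lesssim A_{tx}Y_{tx}+B_{tx}$, apply Gronwall, and close with Poincar\'e. The only cosmetic difference is that you frame the derivation via the hyperbolic-coordinate expression $\Box_x=-\partial_\lambda^2-\tfrac{3}{\lambda}\partial_\lambda+\lambda^{-2}\Delta_{H^3}$ and the identity for $\Delta_{H^3}$ in terms of the $\op_i$, whereas the paper computes $\ddot\omega$ directly and substitutes the equation; the phrase ``multiplying the previous ODE by $\sqrt\lambda$'' is a harmless slip, since the right hand side is already $\lambda^{-1/2}(R\Wf)_\lambda$ without any further multiplication.
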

	
	\begin{proof}
		For every fixed $(t,x,y)\in \q H_{[2, \infty)}$ we define $\omega_{txy}(\lambda):=  \lambda^\frac{3}{2}\Wf(\frac{\lambda t}{s},\frac{\lambda x}{s},y)$ to be the evaluation of $\Wf$ on the hyperboloid $\q H_\lambda$, then dilated by factor $\lambda^{3/2}$. We have that
		\[
		\dot{\omega}_{txy}(\lambda)= \lambda^{1/2}\left(\frac{3}{2}\Wf_\lambda + (\scal\Wf)_\lambda\right)
		\]
		and
		\[
		\ddot{\omega}_{txy}(\lambda)= \frac{1}{\lambda^\frac{1}{2}} (P\Wf)_\lambda 
		\]
		where
		\begin{align*}P\Wf &= \frac{3}{4}\Wf + 3(t\partial_t \Wf + x^i\partial_i \Wf)+(t^2\partial^2_t \Wf + 2tx^i\partial_i \partial_t \Wf + x^i x^j \partial_i \partial_j \Wf).
		\end{align*}
		Using the equation \eqref{eq_Wt} we derive that $\omega_{txy}$ satisfies the following equation
		\begin{align*}
			\ddot{\omega}_{txy}-\left(1+u_\lambda\right)\Delta_y \omega_{txy} = -\lambda^\frac{3}{2} \Ff_\lambda
			+\lambda^{-\frac{1}{2}} \left( s^2
			\bar{\partial}^i \bar{\partial}_i \Wf + x^i x^j \bar{\partial}_i \bar{\partial}_j \Wf + \frac{3}{4} \Wf + 3x^i \bar{\partial}_i \Wf  \right)_\lambda.
		\end{align*}
		We drop the lower indices in $\omega_{txy}(\lambda)$ in order to have a lighter notation and simply denote it by $\omega(\lambda)$ in what follows.
		We multiply the above equation by $\partial_\lambda \omega$ and integrate over $\m S^1$
		\begin{align*}
			&\int_{\m S^1} \partial_\lambda\omega \left( \partial^2_\lambda\omega-\left(1+u_\lambda\right)\Delta_y \omega \right)dy\\
			=& \frac{d}{d\lambda} \left(\frac{1}{2}\int_{\m S^1} |\partial_\lambda\omega|^2\,  dy\right) + \int_{\m S^1} (1+u_\lambda )\partial_y \omega\, \partial_y \partial_\lambda \omega\,  dy
			+\int_{\m S^1} \partial_\lambda \omega\, \partial_y u_\lambda\,   \partial_y \omega \, dy\\
			=& \frac{d}{d\lambda} \left(\frac{1}{2}\int_{\m S^1} |\partial_\lambda \omega|^2+(1+u_\lambda)|\partial_y \omega|^2\, dy \right)-\frac{1}{2}\int_{\m S^1} \partial_\lambda u_\lambda \, |\partial_y \omega|^2dy    
			+\int_{\m S^1} \partial_\lambda\omega\, \partial_y u_\lambda \, \partial_y \omega\,  dy.
		\end{align*}
		We obtain that
		\[
		\frac{d}{d\lambda} Y^2_{tx}(\lambda)\lesssim   A_{tx}(\lambda)Y_{tx}^2(\lambda) + B_{tx}(\lambda) Y_{tx}(\lambda)
		\]
		with $A_{tx}, B_{tx}, Y_{tx}$ as in the statement and from the Gronwall lemma
		$$Y_{tx}(s) \lesssim \left(Y_{tx}(2) +  \int_{2}^s B_{tx}(\lambda)\, d\lambda\right) e^{\int_2^s A_{tx}(\lambda)\, d\lambda}.$$
		Finally, from the definition of $Y_{tx}$, the Poincar\'e inequality and the fact that $s\ge 1$ we get
		\[
		s^\frac{3}{2}\left(\|\Wf\|_{L^2(\m S^1)}+\|\partial_y \Wf\|_{L^2(\m S^1)}\right) + s^\frac{1}{2}\|\scal\Wf \|_{L^2(\m S^1)}\lesssim Y_{tx}(s).
		\]
	\end{proof}
	
	\begin{proposition}\label{prop:improved_KG}
		Under the a-priori assumptions \eqref{booten1}-\eqref{bootZ} we have
		\begin{equation}\label{est_wt2}
			\sup_{\q H_{[2, s_0]}}t^{\frac{3}{2}} \left( \left\| \partial^j \Wt \right\|_{L^2(\m S^1)} + \left\| \partial_y \partial^j\Wt\right\|_{L^2(\m S^1)}\right) + \sup_{\q H_{[2, s_0]}} t^\frac{3}{2}  \left\|  \scal\partial^{j}\Wt\right\|_{L^2(\m S^1)} \lesssim \epsilon, \quad j=\overline{0,1}
		\end{equation}
		\begin{equation}\label{est_Zwt}
			\sup_{\q H_{[2, s_0]}} t^{\frac{3}{2}} \left( \left\| Z \Wt \right\|_{L^2(\m S^1)} + \left\| \partial_y Z\Wt\right\|_{L^2(\m S^1)}\right) + \sup_{\q H_{[2, s_0]}} \frac{t^{\frac{3}{2}}}{s} \left\| \scal Z \Wt\right\|_{L^2(\m S^1)} \lesssim \epsilon s^\sigma,
		\end{equation}
		and
		\begin{equation} \label{est_w3}
			\sup_{\q H_{[2, s_0]}}	st^\frac{1}{2}\left(\|\partial^2\Wt\|_{L^2(\m S^1)} + \|\partial_y \partial^2\Wt\|_{L^2(\m S^1)}\right) + \sup_{\q H_{[2, s_0]}} t^\frac{1}{2}\|\scal \partial^2\Wt\|_{L^2(\m S^1)}\lesssim \ep ,
		\end{equation}
		\begin{equation} \label{est_Zwt2}
			\sup_{\q H_{[2, s_0]}} 	st^\frac{1}{2}\left( \left\|\partial Z\Wt\right\|_{L^2(\m S^1)} +  \left\|\partial_y\partial Z\Wt\right\|_{L^2(\m S^1)}\right) + t^\frac{1}{2}\left\| \scal \partial Z\Wt\right\|_{L^2(\m S^1)}\lesssim \ep s^\sigma.
		\end{equation}
	\end{proposition}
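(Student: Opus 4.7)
The strategy is to apply Proposition~\ref{lmkg} with $\Wf$ taken successively to be $\Wt$, $\partial\Wt$, $Z\Wt$, $\partial^2\Wt$, and $\partial Z\Wt$. Each such $\Wf$ has vanishing mean in $y$, and by commuting the relevant derivative or vector field into the $\Wt$-equation from \eqref{syst_freq} we find that $\Wf$ satisfies
$$\Box_{x,y}\Wf + u\,\Delta_y\Wf = \Ff,$$
where the source $\Ff$ is a sum of: the commutator $[u\partial_y^2,\q Z^\gamma]\Wt$ arising from the quasilinear term; the non-zero-mode part of the null-form nonlinearity $\mathbf{N}(\q Z^{\le n}W,\q Z^{\le n}W)$; and the mean-subtracted semilinear remainder $\int_{\m S^1}\partial_y\ut\,\partial_y\Wt\,dy$. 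The task then reduces to controlling the three ingredients on the right-hand side of Proposition~\ref{lmkg}: the initial datum $Y_{tx}(2)$, the exponent $\int_2^s A_{tx}(\lambda)\,d\lambda$, and the driving term $\int_2^s B_{tx}(\lambda)\,d\lambda$.

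The datum $Y_{tx}(2)$, an $L^2(\m S^1)$-norm of $\Wf$ and $\scal\Wf$ evaluated on $\q H_2$ at the rescaled point $(2t/s,2x/s)$, is $\lesssim\epsilon$ uniformly in $(t,x)$ by local well-posedness up to $\q H_2$ together with the smallness of the Cauchy data. For the exponent the key input is the decomposition $u = u_0 + \ut$: we use the identity $\scal=(s^2/t)\partial_t+(x_i/t)\Omega_{0i}$, bounding $(s^2/t)\partial_t u_0$ via the Klainerman--Sobolev estimate \eqref{ks1} and the term $(x_i/t)\Omega_{0i}u_0$ via the bootstrap pointwise bound \eqref{bootZ}. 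This yields $\lambda^{-1}|\scal u_0|_\lambda\lesssim B\epsilon\,\lambda^{\sigma-2}$ up to an $O(1)$ geometric factor depending on $(t,x)$. The $\ut$-contribution to $\scal u$ and the $\partial_y u$ term are controlled directly by \eqref{ks3}. After integration in $\lambda$ the exponent is $O(B\epsilon)$ in the cases without an additional $Z$-derivative, and $O(B\epsilon\,s^\sigma)$ when a $Z$ is applied, producing respectively an $O(1)$ and $O(s^\sigma)$ exponential factor, matching \eqref{est_Zwt} and \eqref{est_Zwt2}.

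For the source integral $\int_2^s B_{tx}(\lambda)\,d\lambda$, the quantity
$$R\Wf = s^2\bar\partial^i\bar\partial_i\Wf + x^ix^j\bar\partial_i\bar\partial_j\Wf + 3x^i\bar\partial_i\Wf + \tfrac34\Wf - s^2\Ff$$
splits into a tangential piece, controlled by the fast-decaying tangential Klainerman--Sobolev estimates \eqref{esttan}--\eqref{esttan3} together with \eqref{ks3}, and the forcing piece $s^2\Ff$. For the latter we rewrite each null form in $\mathbf{N}$ via the representation \eqref{rewrite}, which exposes on one of the factors either a good derivative $\bar\partial$ or a $(t-r)/t$ weight; combined with \eqref{ks1}--\eqref{ks3.1} and \eqref{ks4} this produces a $\lambda^{-3/2}$-integrable bound. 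The quasilinear commutator $[u\partial_y^2,\q Z^\gamma]\Wt$ is handled analogously: $u$ and its $\q Z$-derivatives are placed in $L^\infty(\m S^1)$ using \eqref{ks4}, \eqref{bootZ}, and \eqref{ks3}, while the $\partial_y^{\le 2}\Wt$-factors are kept in $L^2(\m S^1)$.

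The main obstacle is controlling the exponent $\int_2^s A_{tx}(\lambda)\,d\lambda$ without losing an unbounded factor: it is precisely because \eqref{bootZ} gives $|ZW_0|\lesssim B\epsilon\, t^{-1}s^\sigma$ with arbitrarily small $\sigma$ that we barely obtain an integrable upper bound on $\lambda^{-1}|\scal u_0|_\lambda$, the $s^\sigma$ residue being then unavoidable in the $Z$-cases. Once \eqref{est_wt2}--\eqref{est_Zwt} are established, the bounds \eqref{est_w3} and \eqref{est_Zwt2} follow from the same analysis applied to $\Wf=\partial^2\Wt$ or $\partial Z\Wt$, the modified weight $s\,t^{1/2}$ replacing $t^{3/2}$ reflecting that at this order the source $\Ff$ loses one power of $s$-decay (coming from the extra derivative falling on a non-null factor), which is transferred into the weight in front of the $L^2_y$-norm.
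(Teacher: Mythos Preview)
Your overall strategy---applying Proposition~\ref{lmkg} successively to $\Wt$, $\partial\Wt$, $Z\Wt$, $\partial^2\Wt$, $\partial Z\Wt$---is the paper's, but two genuine gaps prevent the argument from closing.

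First, you do not track the geometric factors $(s/t)$. Proposition~\ref{lmkg} only bounds $s^{3/2}\|\Wf\|_{L^2(\m S^1)}$, whereas the statement requires control of $t^{3/2}\|\Wf\|_{L^2(\m S^1)}$ (or $st^{1/2}$ at second order). The conversion is possible precisely because $Y_{tx}(2)$ and $\int_2^s B_{tx}(\lambda)\,d\lambda$ each carry a factor $(s/t)^{3/2}$ (respectively $(s/t)^{1/2}$ in the second-order cases). For instance, $Y_{tx}(2)$ involves $\Wf$ evaluated at the rescaled point $(2t/s,2x/s)\in\q H_2$, and \eqref{ks3} there gives $\|\Wf_2\|_{L^2(\m S^1)}\lesssim\epsilon(2t/s)^{-3/2}\sim\epsilon(s/t)^{3/2}$, not merely $\epsilon$. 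Your claim that $Y_{tx}(2)\lesssim\epsilon$ ``uniformly in $(t,x)$'' is too weak, and the same tracking must be done inside $B_{tx}$; without these factors the argument yields only $s^{-3/2}$ decay, not $t^{-3/2}$.

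Second, you misidentify where the $s^\sigma$ loss in \eqref{est_Zwt} and \eqref{est_Zwt2} comes from. The function $A_{tx}(\lambda)$ depends only on $u$ (through $\scal u$ and $\partial_y u$) and is \emph{identical for every choice of} $\Wf$; the paper obtains $\int_2^s A_{tx}\,d\lambda\lesssim\epsilon$ in all cases, so the exponential is always $O(1)$. The $s^\sigma$ enters instead through $B_{tx}$: when $\Wf=Z\Wt$ the source $\Ff$ contains the commutator term $Zu_0\cdot\partial_y^2\Wt$, and combining the bootstrap bound \eqref{bootZ} on $Zu_0$ with the \emph{already established} sharp estimate \eqref{est_wt2} on $\partial_y^2\Wt$ gives $\|Zu_0\cdot\partial_y^2\Wt\|_{L^2(\m S^1)}\lesssim\epsilon^2 t^{-5/2}s^\sigma$, hence $\int B_{tx}\lesssim\epsilon s^\sigma(s/t)^{3/2}$. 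This also forces the proof to proceed in a definite order (first \eqref{est_wt2} for $j=0$, then \eqref{est_Zwt}, and similarly at second order), an iterative structure you do not mention. Finally, the implication ``$e^{O(B\epsilon s^\sigma)}=O(s^\sigma)$'' is in any case false, since $s$ is unbounded.
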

	\proof For any fixed $j=\overline{0,2}$ and $k=\overline{0,1}$ we compare the equation satisfied by the differentiated functions $\partial^{j}\Wt$ and $\partial^{k}Z\Wt$ respectively with the equation \eqref{eq_Wt} and apply the result of proposition \ref{lmkg}. A simple computation shows that
	\[
	\Box_{x,y} \partial^j\Wt + u \Delta_y \partial^j\Wt = F^j_1, \quad j=\overline{0,2}
	\]
	\[
	\Box_{x,y} \partial^k Z\Wt + u \Delta_y \partial^k Z\Wt = F^k_2, \quad k=\overline{0,1}
	\]
	with source terms given by
	\[
	\begin{gathered}
		F^0_1 = \mathbf{N}(W, W) - \int_{\m S^1}  \mathbf{N}(W,W)dy + \int_{\m S^1}\partial_y u\cdot \partial_y \Wt \, dy \\
		F^j_1 = \partial^j F^0_1 -\sum_{1\le h\le j} \partial^h u \cdot\partial^2_y \partial^{j-h}\Wt , \quad j=\overline{1,2}
	\end{gathered}
	\]
	and
	\[
	\begin{gathered}
		F^0_2 = ZF^0_1 - Zu\cdot \partial^2_y \Wt \\
		F^1_2 = \partial Z F^0_1 - \partial Zu \cdot \partial^2_y \Wt - \partial u \cdot \partial^2_y Z\Wt - Zu \cdot \partial^2_y \partial \Wt.
	\end{gathered}
	\]
	From proposition \ref{lmkg} we have that
	\[
	s^\frac{3}{2}\left(\|\Wf\|_{L^2(\m S^1)}+\|\partial_y \Wf\|_{L^2(\m S^1)}\right) + s^\frac{1}{2}\|\scal\Wf \|_{L^2(\m S^1)}\lesssim \left(Y_{tx}(2) +  \int_{2}^s B_{tx}(\lambda)d\lambda\right) e^{\int_{2}^s A_{tx}(\lambda)\, d\lambda}
	\]
	with $\Wf = \{\partial^j \Wt, \partial^kZ\Wt: j=\overline{0,2}, k=\overline{0,1}\}$ and corresponding source term $\Ff=\{F^j_1, F^k_2:  j=\overline{0,2}, k=\overline{0,1}\} $.
	In order to obtain the bounds in the statement we need to estimate the quantities $Y_{tx}(2)$, $A_{tx}(\lambda)$ and $B_{tx}(\lambda)$ defined in \eqref{Y}, \eqref{A}, \eqref{B} for all the different values of $\Wf$ and $\Ff$.
	
	\smallskip
	\textit{1. The $A_{tx}(\lambda)$ term}: this is the same for all values of $\Wf$. Here we decompose $u = u_0 + \ut$ and rewrite the scaling vector field as follows:
	\begin{equation}\label{dec_S}
		\scal = (t-r)\partial_t + (r-t)\partial_r + (t\partial_r + r\partial_t).
	\end{equation}
	Using the pointwise bounds \eqref{ks1}-\eqref{ks3} and the fact that $s/t\le 1$ in the interior of the light cone we derive that
	\[
	A_{tx}(\lambda)\lesssim \sup_{y \in \m S^1}  \frac{s}{t+r}\, |(\partial u)_\lambda|+\lambda^{-1}|(Zu)_\lambda| + |(\partial_yu)_\lambda| \lesssim \ep\lambda^{-\frac{3}{2}+\delta_5}
	\]
	and consequently
	\begin{equation}
		\label{estA} \int_1^s A_{tx}(\lambda)d\lambda \lesssim \ep.
	\end{equation}

	\textit{2. The $Y_{tx}(2)$ term}: the functions appearing here are evaluated on the hyperboloid $\q H_2$. From the bound \eqref{ks3} on $\Wt$, the smallness of $u$ given by \eqref{ks4} and the decomposition \eqref{dec_S} it follows that for all values of $\Wf$ under consideration
	\begin{equation}
		\label{estY1}|Y_{tx}(2)|\lesssim \|\Wf_2\|_{L^2(\m S^1)} + \|(\scal \Wf)_2\|_{L^2(\m S^1)} +\|1+u_2\|_{L^\infty(\m S^1)} \|(\partial_y \Wf)_2\|_{L^2(\m S^1)} \lesssim \ep \, \left( \frac{s}{t}\right)^{\frac{3}{2}}.
	\end{equation}
	
	The $B_{tx}(\lambda)$ terms are the only ones for which we need to distinguish between the different values of $\Wf$ and hence of $\Ff$.
	
	\smallskip
	\textit{3. The $B_{tx}(\lambda)$ term for $\Wf = \mathcal{Z}^{\le 1}\Wt$:} from the pointwise bounds \eqref{ks3}, \eqref{esttan} and \eqref{esttan2} we immediately obtain the following estimate 
	\begin{equation}\label{B1}
		\begin{aligned}
			&\left\|\lambda^{-1/2} \left(s^2
			\bar{\partial}^i \bar{\partial}_i \Wf + x^i x^j \bar{\partial}_i \bar{\partial}_j \Wf + \frac{3}{4} \Wf + 3x^i \bar{\partial}_i \Wf  \right)_\lambda\right\|_{L^2(\m S^1)}\\
			& \lesssim  \lambda^\frac{3}{2}\left(1+\frac{r^2}{s^2}\right)\|(\bar{\partial}^2\Wf)_\lambda\|_{L^2(\m S^1)}
			+ \lambda^\frac{1}{2}\,  \frac{r}{s}\|(\bar{\partial}\Wf)_\lambda\|_{L^2(\m S^1)}
			+\lambda^{-\frac{1}{2}}\| \Wf_\lambda\|_{L^2(\m S^1)} \lesssim \ep \lambda^{-2+\delta_5}\left(\frac{s}{t}\right)^{\frac{3}{2}}.
		\end{aligned}
	\end{equation}
	As concerns the estimate of the $L^2(\m S^1)$ norm of the source term $\Ff_\lambda$ we distinguish betweeen the different values $\{F^0_1, F^1_1, F^0_2\}$ it takes in this case, after observing that
	\[
	\left\| \lambda^{-\frac{1}{2}} \left( s^2 \Ff\right)_\lambda\right\|_{L^2(\m S^1)} = \lambda^\frac{3}{2} \|\Ff_\lambda\|_{L^2(\m S^1)}.
	\]

	\textit{3.a. The case $\Ff=F^0_1$}: this corresponds to $\Wf = \Wt$.
	We decompose each occurrence of $W$ in the formula for $F^0_1$  using \eqref{dec}
	\[
	\mathbf{N}(W, W)  =  \mathbf{N}(W_0, W_0) + 2\mathbf{N}(W_0, \Wt)+ \mathbf{N}(\Wt, \Wt) 
	\]
{and observe that $F^0_1$ does not make appear the zero-mode interactions $W_0\times W_0$. Simply using the pointwise bounds \eqref{ks1} and \eqref{ks3} we deduce that}
	\[
	\| F^0_1\|_{L^2(\m S^1)}\lesssim \ep^2 t^{-2}s^{-1+\delta_5}  ,
	\] and hence that
	\begin{equation}\label{estF}
		\lambda^{\frac{3}{2}}\|\Ff_\lambda
		\|_{L^2(\m S^1)} \lesssim \ep^2 \lambda^{-\frac{3}{2}+2\delta_5} \left(\frac{s}{t}\right)^2, \quad \Ff=F^0_1
	\end{equation}
	%
	%
	which summed up with \eqref{B1} yields
	\begin{equation}\label{intB}
		\int_2^s B_{tx}(\lambda)d\lambda \lesssim \ep \left(\frac{s}{t}\right)^{\frac{3}{2}}.
	\end{equation}
	The combination of \eqref{intB} with \eqref{estA} and \eqref{estY1} implies \eqref{est_wt2} for $j=0$.
	
	\smallskip
	\textit{3.b The case $\Ff=F^1_1$}: this corresponds to $\Wf = \partial \Wt$ with $\partial = \{\partial_t, \partial_x, \partial_y\}$. When $\Wf = \partial_y\Wt$ the argument used above yields \eqref{est_wt2}. When $\Wf = \partial_{tx}\Wt$ the only additional contribution that needs to be analyzed is $\partial_{tx} u_0 \cdot\partial^2_y \Wt$, which appears in $F^1_1$ after using the decomposition \eqref{dec}. This product is bounded
	using \eqref{ks1} and \eqref{ks3}
	\[
	\|\partial u_0 \cdot \partial^2_y \Wt\|_{L^2(\m S^1)} \lesssim \| \partial u_0\|_{L^\infty(\m S^1)}\| \partial^2_y \Wt\|_{L^2(\m S^1)}\lesssim \ep^2 t^{-2}s^{-1+\delta_5}.
	\]
	We obtain \eqref{estF} for $\Ff=F^1_1$ and consequently \eqref{intB}, which summed up with \eqref{estA} and \eqref{estY1} gives us \eqref{est_wt2} for $j=1$. 
	
	\smallskip
	
	\textit{3.c The case $\Ff = F^0_2$}: this corresponds to $\Wf = Z\Wt$. The term $ZF^0_1$ is estimated using the argument in case \emph{3.a} and the only contribution to analyze is $Zu\cdot\partial^2_y \Wt$.  We again decompose $u$ using \eqref{dec} and obtain from \eqref{ks3} that
	\[
	\left\| Z\ut \cdot \partial^2_y \Wt\right\|_{L^2(\m S^1)}\lesssim \ep^2 t^{-3+2\delta_5},
	\]
	while from the a-priori bound \eqref{bootZ} on $ZW_0$ and the enhanced bound \eqref{est_wt2} on $\partial^2_y \Wt$ we get 
	\[
	\left\| Zu_0\cdot \partial^2_y \Wt\right\|_{L^2(\m S^1)}\lesssim \ep^2 t^{-\frac{5}{2}}s^{\sigma}.
	\]
	Therefore from \eqref{estF} and the above bounds we derive that $F^0_2$ satisfied the following estimate
	\[
	\lambda^\frac{3}{2}\|(F^0_2)_\lambda\|_{L^2(\m S^1)}\lesssim \ep^2 \lambda^{-\frac{3}{2}+2\delta_5}\left(\frac{s}{t}\right)^2+  \ep^2 \lambda^{-1+\sigma} \left(\frac{s}{t}\right)^{\frac{5}{2}}\lesssim \ep^2 \lambda^{-1+\sigma}\left(\frac{s}{t}\right)^2
	\]
	which together with \eqref{B1} gives
	\[
	\int_2^s B_{tx}(\lambda)d\lambda \lesssim \ep s^\sigma \left(\frac{s}{t}\right)^{\frac{3}{2}}
	\]
	and finally \eqref{est_Zwt} when combined with \eqref{estA} and \eqref{estY1}.

	\medskip
	\textit{4. The $B_{tx}(\lambda)$ term for $\Wf = \{\partial^2\Wt, \partial Z\Wt\}$:} here the bounds \eqref{ks3}, \eqref{esttan} and \eqref{esttan3} give us that
	\begin{equation}\label{B2}
		\begin{aligned}
			&\left\|\lambda^{-1/2} \left(s^2
			\bar{\partial}^i \bar{\partial}_i \Wf  + \frac{3}{4} \Wf + 3x^i \bar{\partial}_i \Wf  \right)_\lambda\right\|_{L^2(\m S^1)}\\
			& \lesssim  \lambda^\frac{3}{2} \|(\bar{\partial}^2\Wf)_\lambda\|_{L^2(\m S^1)}
			+ \lambda^\frac{1}{2}\,  \frac{r}{s}\|(\bar{\partial}\Wf)_\lambda\|_{L^2(\m S^1)}
			+\lambda^{-\frac{1}{2}}\| \Wf_\lambda\|_{L^2(\m S^1)} \lesssim \ep \lambda^{-2+\delta_5}\left(\frac{s}{t}\right)^{\frac{3}{2}-\delta_5}
		\end{aligned}
	\end{equation}
	and
	\begin{equation}\label{B3}
		\left\| \lambda^{-\frac{1}{2}} (x^ix^j \overline{\partial}_i \overline{\partial}_j \Wf)_\lambda\right\|_{L^2(\m S^1)} \lesssim \ep \lambda^{-2+\delta_5}\left(\frac{s}{t}\right)^\frac{1}{2}.
	\end{equation}
	We then separately analyze the $L^2(\m S^1)$ norm of $\Ff_\lambda$ when $\Ff=\{F^2_1, F^1_2\}$.
	
	\textit{4.a The case $\Ff = F^2_1$:} this corresponds to $\Wf = \partial^2\Wt$ with $\partial = \{\partial_t, \partial_x, \partial_y\}$. The argument to perform here is analogous to the one in $3.b$ in that one should first consider the case where $\Wf = \partial^2_y\Wt$, followed by $\Wf = \partial_y \partial_{tx} \Wt$ and finally by $\Wf = \partial^2_{tx} \Wt$. It is a simple exercise to verify that
	\[
	\lambda^{\frac{3}{2}}\left\|( F^2_1)_\lambda\right\|_{L^2(\m S^1)}\lesssim \ep^2  \lambda^{-\frac{3}{2}+\delta_5}\left(\frac{s}{t}\right)^{\frac{5}{2}-\delta_5}
	\]
	which summed up with \eqref{B2} and \eqref{B3} gives
	\[
	\int_2^s B_{tx}(\lambda)d\lambda \lesssim \ep \left(\frac{s}{t}\right)^\frac{1}{2}.
	\]
	The combination of the above estimate with \eqref{estA} and \eqref{estY1} yields then \eqref{est_w3}.

	\vspace{5pt}
	\textit{4.b The case $\Ff=F^1_2$}: this corresponds to $\Wf = \partial Z\Wt$. The term $\partial ZF^0_1$ enjoys the same estimate \eqref{estF} as $F^0_1$ in case $3.a$.
	We decompose each occurrence of $u$ in the remaining contributions to $F^1_2$ using \eqref{dec} and focus on discussing the terms involving products $u_0\times \Wt$. The products $\ut \times \Wt$ are estimated using \eqref{ks3} and are bounded by $\ep^2 t^{-3+2\delta_5}$.
	
	When $\Wf = \partial_y Z\Wt$ the only term that is left to control is the $L^2(\m S^1)$ norm of $Zu_0\cdot\partial^3_y \Wt$, which is estimated using the a-priori bound \eqref{bootZ} and \eqref{est_w3} so that
	\[
	\left\|  Zu_0 \cdot \partial^2_y\partial
	\Wt\right\|_{L^2(\m S^1)}\le \|  Zu_0\|_{L^\infty(\m S^1)} \| \partial^2_y \partial\Wt\|_{L^2(\m S^1)}\lesssim \ep^2 t^{-\frac{3}{2}}s^{-1+\sigma}.
	\]
	Therefore in this case 
	\[
	\begin{aligned}
		\lambda^{\frac{3}{2}}\left\|(F^1_2)_\lambda\right\|_{L^2(\m S^1)}\lesssim \ep^2 \lambda^{-\frac{3}{2}+2\delta_5}\left(\frac{s}{t}\right)^2 + \ep^2 \lambda^{-1+\sigma} \left(\frac{s}{t}\right)^\frac{3}{2} \lesssim \ep^2 \lambda^{-1+\sigma} \left(\frac{s}{t}\right)^\frac{3}{2}
	\end{aligned}
	\]
	which together with \eqref{B2} and \eqref{B3} gives
	\begin{equation}\label{intB1}
		\int_2^s B_{txy}(\lambda)d\lambda \lesssim \ep s^{\sigma}  \left(\frac{s}{t}\right)^{\frac{1}{2}}.
	\end{equation}
	Summing this estimate up with \eqref{estA} and \eqref{estY1} we deduce \eqref{est_Zwt2} with $\partial = \partial_y$.
	
	When $\Wf = \partial_{tx}Z\Wt$ the remaining terms to control in the $L^2(\m S^1)$ norm are
	\[
	\partial Zu_0 \cdot \partial^2_y \Wt + \partial u_0 \cdot \partial^2_y Z\Wt + Zu_0 \cdot \partial^2_y \partial \Wt.
	\]
	The first term is estimated using the bounds \eqref{ks1} and \eqref{est_wt2} as follows
	\[
	\left\| \partial Zu_0 \cdot \partial^2_y \Wt\right\|_{L^2(\m S^1)}\le \| \partial Zu_0\|_{L^\infty(\m S^1)} \| \partial^2_y \Wt\|_{L^2(\m S^1)}\lesssim \ep^2 t^{-2}s^{-1},
	\]
	the second one using \eqref{ks1} and \eqref{est_Zwt2} with $\partial = \partial_y$ proved above
	\[
	\left\| \partial u_0 \cdot \partial^2_y Z\Wt\right\|_{L^2(\m S^1)}\le \| \partial u_0\|_{L^\infty(\m S^1)} \| \partial^2_y Z\Wt\|_{L^2(\m S^1)}\lesssim \ep^2 t^{-1}s^{-2+\sigma},
	\]
	the third one using the a-priori bound \eqref{bootZ} and  \eqref{est_w3}
	\[
	\left\| Z u_0 \cdot \partial^2_y\partial\Wt\right\|_{L^2(\m S^1)}\le \| Z u_0\|_{L^\infty(\m S^1)} \| \partial^2_y \partial\Wt\|_{L^2(\m S^1)}\lesssim \ep^2 t^{-\frac{3}{2}}s^{-1+\sigma}.
	\]
	We obtain in this case that
	\[
	\lambda^\frac{3}{2}\|(F^1_2)_\lambda\|_{L^2(\m S^1)}\lesssim \ep^2 \lambda^{-\frac{3}{2}+2\delta_5}\left(\frac{s}{t}\right)^2 + \ep^2 \lambda^{-\frac{3}{2}+\sigma}\left(\frac{s}{t}\right)+ \ep^2 \lambda^{-1+\sigma} \left(\frac{s}{t}\right)^\frac{3}{2}\lesssim \ep^2\lambda^{-1+\sigma}\left(\frac{s}{t}\right) 
	\]
	which together with \eqref{B2} and \eqref{B3} again gives \eqref{intB1}.
	Summing this estimate up with \eqref{estA} and \eqref{estY1} finally yields \eqref{est_Zwt2} when $\partial = \partial_tx$.
	\endproof
	
	\subsection{The propagation of the a-priori pointwise bound}
	
	In order to propagate the a-priori bound \eqref{bootZ} on the $Z$ derivative of $W_0$ in the interior region $\q H_{[2, s_0]}$ we use $L^\infty-L^\infty$ type estimates. For this we give a closer look to the wave equation satisfied by $ZW_0$ and use the enhanced pointwise bounds on the solution recovered in the previous subsection to estimate the nonlinear terms. The argument that follows will in addition provide us with a decay estimate for $W_0$ without derivatives and hence improve on \eqref{ks4}.
	
	We will make use of the following lemma, due to Alinhac \cite{Alinhac06}.

	\begin{lemma}\label{supint}
		Let $\Wf_0$ be the solution to $\Box_{tx} \Wf_0= \Ff_0$ with zero initial data and suppose that $\Ff_0$ is spatially compactly supported satisfying the following pointwise bound
		$$|\Ff_0(t,x)|\leq Ct^{-2-\nu}(t-|x|)^{-1+\mu}$$
		for some positive constant $C$. Then 
		$$|\Wf_0(t,x)|\lesssim \frac{C}{\mu \nu} (t-|x|)^{\mu -\nu}t^{-1}.$$
	\end{lemma}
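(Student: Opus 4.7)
\medskip
\noindent\textbf{Proof plan.} The plan is to use the explicit representation formula for the 3D wave equation with zero data, reduce to a radial majorant problem, and then evaluate the resulting integral in null coordinates.

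First I would invoke Duhamel's principle together with Kirchhoff's formula, writing
\[
\Wf_0(t,x)=\frac{1}{4\pi}\int_0^t \rho \int_{S^2} \Ff_0\bigl(t-\rho,\, x+\rho\omega\bigr)\, d\omega\, d\rho,
\]
which expresses $\Wf_0(t,x)$ as an integral of $\Ff_0$ over the past light cone from $(t,x)$. Since the hypothesized bound on $|\Ff_0|$ is rotationally symmetric, I replace $\Ff_0$ by the non-negative radial majorant $f(\tau,\sigma)=C\tau^{-2-\nu}(\tau-\sigma)^{-1+\mu}\mathbf{1}_{\{0\le \sigma\le \tau\}}$ and obtain $|\Wf_0(t,x)|\le g(t,|x|)$, where $g$ is the solution of $\Box g=f$ with zero Cauchy data. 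Since $f$ is radial, so is $g$.

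Next I would use the classical 1D reduction: the function $v(t,r):=r\,g(t,r)$ satisfies $(\partial_t^2-\partial_r^2)v=r f(t,r)$ on the half-line $r\ge 0$ with $v|_{r=0}=0$, and d'Alembert's formula applied with odd extension yields
\[
g(t,r)=\frac{1}{2r}\iint_{\Delta(t,r)} \sigma\, f(\tau,\sigma)\, d\tau\, d\sigma,
\]
where $\Delta(t,r)$ is the backward characteristic triangle $\{0\le \tau\le t,\ |r-(t-\tau)|\le \sigma\le r+(t-\tau)\}$, intersected with $\{\sigma\le \tau\}$ (the support of $f$). I would then change to the null coordinates $p=\tau+\sigma$, $q=\tau-\sigma$, so that $d\tau\,d\sigma=\tfrac12\,dp\,dq$, $\tau=(p+q)/2$, $\sigma=(p-q)/2$, and the region $\Delta(t,r)\cap\{\sigma\le\tau\}$ becomes roughly the rectangle $\{0\le q\le t-r,\ t-r\le p\le t+r\}$ (together with a small truncation near $p=2t-q$ coming from $\tau\le t$, which only shrinks the range).

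In these coordinates $f=C\bigl((p+q)/2\bigr)^{-2-\nu} q^{-1+\mu}$, and using $\sigma=(p-q)/2\le p/2\lesssim p$ together with $(p+q)\gtrsim p\gtrsim t-r$ (since $p\ge t-r$ in the rectangle) I would estimate
\[
\iint \sigma f\,d\tau d\sigma
\ \lesssim\ C\int_0^{t-r} q^{-1+\mu}\!\!\int_{t-r}^{t+r}\!\! p\cdot p^{-2-\nu}\,dp\, dq
\ \lesssim\ \frac{C}{\mu\,\nu}\,(t-r)^{\mu}\,(t-r)^{-\nu}\cdot r,
\]
where the $p$-integral contributes the factor $(t-r)^{-\nu}/\nu$ after observing that the upper limit $(t+r)^{-\nu}$ is negligible, and the $q$-integral contributes $(t-r)^\mu/\mu$; the extra factor $r$ comes from the $p$-range of length $2r$ weighted by $p\cdot p^{-2-\nu}$ at the lower endpoint. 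Dividing by $r$ and combining with $r\sim t$ when $r\ge t/2$ produces $\frac{C}{\mu\nu}(t-r)^{\mu-\nu}t^{-1}$.

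The main obstacle is the regime $r\ll t$, where the factor $r^{-1}$ in front of the integral is much larger than $t^{-1}$. Here I would split the integration region of $q$ into $q\le r$ and $q\ge r$: on the first piece the bound $\sigma\le p$ can be refined to $\sigma\le (p-q)/2$ which, combined with $p+q\sim p\sim \tau$ of order $t$, produces the sharper $t^{-1}$ decay; on the second piece one uses $p+q\gtrsim q$ to shift one power of the weight and recover the same scaling. After summing the two contributions, the $r^{-1}$ is replaced by $t^{-1}$ uniformly for $r\in[0,t]$, giving the claimed estimate.
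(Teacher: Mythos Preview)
Your approach is correct and is essentially the standard argument for this estimate; the paper itself does not prove the lemma but simply attributes it to Alinhac, and Alinhac's proof proceeds exactly along the lines you describe (Kirchhoff's formula, passage to a radial majorant, the 1D reduction $v=rg$, and evaluation of the backward triangle in null coordinates $p=\tau+\sigma$, $q=\tau-\sigma$).

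One small imprecision worth flagging: your treatment of the regime $r\ll t$ is more complicated than necessary, and the $q$-splitting you propose is a red herring. The clean way is simply to bound the $p$-integral differently in the two regimes. For $r\ge t/2$ you use the antiderivative bound $\int_{t-r}^{t+r}p^{-1-\nu}\,dp\le \nu^{-1}(t-r)^{-\nu}$ and then $r^{-1}\lesssim t^{-1}$. For $r<t/2$ you instead note that on the rectangle $p\sim t$ and $p+q\gtrsim t$, so $\tau^{-2-\nu}\lesssim t^{-2-\nu}$; computing $\int_{t-r}^{t+r}\tfrac{p-q}{2}\,dp=r(t-q)\le rt$ exactly produces the factor of $r$ that cancels the $1/r$ out front, and the remaining $q$-integral gives $(t-r)^{\mu}/\mu$. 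Since $t-r\sim t$ in this regime, you get $g(t,r)\lesssim C\mu^{-1}t^{-1}(t-r)^{\mu-\nu}$ directly. No splitting in $q$ is needed.
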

	
	%
	\begin{proposition}\label{prpw0}
		There exists a constant $B>0$ sufficienly large and $\ep_0$ sufficiently small such that, for any $0<\ep<\ep_0$ if $W =(u,v)^T$ is solution to the Cauchy problem \eqref{eq_W}-\eqref{data_W} and satisfies the a-priori bounds \eqref{booten1}-\eqref{bootZ} in the interior region $\q H_{[2, s_0]}$ and the global energy bounds \eqref{boot_energy} in the exterior region $\dext$, then in $\q H_{[2, s_0]}$ it actually satisfies the enhanced pointwise bound
		\[
		|ZW_0(t,x)|\le B\ep t^{-1}s^\sigma.
		\]
	\end{proposition}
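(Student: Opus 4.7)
The plan is to apply Alinhac's $L^\infty$-$L^\infty$ estimate (lemma \ref{supint}) to the flat-space wave equation
\[
\Box_{tx}(ZW_0) \;=\; Z\!\int_{\m S^1}\mathbf{N}(W,W)\,dy \;+\; Z\!\int_{\m S^1}\partial_y\ut\,\partial_y\Wt\,dy \;=:\; \Ff^{Z}_0,
\]
obtained by commuting $Z$ with the first equation of \eqref{syst_freq}. I would split $ZW_0 = V_1 + V_2$, where $V_2$ solves the inhomogeneous problem with zero Cauchy data at $t=2$ so that lemma \ref{supint} applies directly, while $V_1$ carries the data. By finite speed of propagation together with the exterior pointwise bound \eqref{Z0_ext} (which feeds the interior problem along the null boundary of $\hin_{[2,\infty)}$) and the smallness of the data at $t=2$, the contribution of $V_1$ at any $(t,x)\in\q H_{[2,s_0]}$ is $\ll \ep\, t^{-1}s^{\sigma}$.

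For $V_2$, the goal is to prove a pointwise bound of the form $|\Ff^{Z}_0(t,x)|\lesssim \ep^{2}\,t^{-2-\nu}(t-|x|)^{-1+\mu}$ with small $\mu,\nu>0$, so that lemma \ref{supint} yields $|V_2|\lesssim \ep^{2}\,t^{-1}(t-|x|)^{\mu-\nu}$, which is converted into an $\ep^{2}\,t^{-1}s^{\sigma}$ bound using the identity $s^{2}=(t-|x|)(t+|x|)\lesssim (t-|x|)t$ in the cone. Writing $W = W_0 + \Wt$, the source $Z\mathbf{N}(W,W)$ is rewritten as a sum of $\mathbf{N}(\mathcal{Z}^{\le 1}W,\mathcal{Z}^{\le 1}W)$ via the commutator identities of Section~4, and further decomposed into zero-mode self-interactions, mixed $W_0\times\Wt$ interactions, and $\Wt\times\Wt$ interactions. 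Zero-mode terms are handled using the null representation \eqref{rewrite}, trading one $\partial$ for either an $\op$ factor (gaining $t^{-3/2}$ via \eqref{ks2}) or a $(t-|x|)/t$ factor, and using \eqref{ks1} for the remaining $\partial$. The non-zero-mode interactions, together with the quasilinear-like term $Z\!\int\partial_y\ut\,\partial_y\Wt\,dy$, are estimated after integration in $y$ via Cauchy-Schwarz on $\m S^{1}$, using the sharp $L^{2}(\m S^{1})$ bounds \eqref{est_wt2}-\eqref{est_Zwt2} from proposition \ref{prop:improved_KG}.

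The most delicate piece is $\mathbf{N}(ZW_0,W_0)$, for which the null decomposition produces the factor $\op(ZW_0)$. Since this quantity involves only two Klainerman vector fields together with an $\op$ derivative acting on $W_0$, the bound \eqref{ks2} still gives $|\op(ZW_0)|\lesssim \ep\, t^{-3/2}$ \emph{without} invoking the growing a-priori bound \eqref{bootZ}; combined with $|\partial W_0|\lesssim \ep\, t^{-1/2}s^{-1}$ from \eqref{ks1} this yields $|\mathbf{N}(ZW_0,W_0)|\lesssim \ep^{2}\,t^{-2}s^{-1}\lesssim \ep^{2}\,t^{-5/2}(t-|x|)^{-1/2}$, corresponding to $\mu=\nu=1/2$ in lemma \ref{supint} and producing a contribution to $V_2$ of order $\ep^{2}\,t^{-1}$ with \emph{no} $s^{\sigma}$ loss. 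The main obstacle is the bookkeeping: one must verify that each source term fits the Alinhac template with a compatible pair $(\mu,\nu)$, and that the $s^{\sigma}$ loss—entering only in contributions where the a-priori bound \eqref{bootZ} is used once, typically in $Z u_0$-type factors—does not stack. Once these estimates close, the final constant on the right is $\lesssim C\ep \cdot B$, which after choosing $A,B$ large enough and $\ep_0$ small enough is absorbed to improve the a-priori factor $2B$ to $B$.
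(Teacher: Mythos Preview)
Your source-term estimates for the interior region are essentially the same as the paper's, and the idea of invoking lemma \ref{supint} is right. The gap is in how you set up the decomposition $ZW_0=V_1+V_2$.

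Lemma \ref{supint} requires the source to be spatially compactly supported, effectively inside the cone $\{t>r\}$ so that the weight $(t-|x|)^{-1+\mu}$ makes sense. Your $V_2$ carries the full source $\Ff^{Z}_0$, which lives on all of $\m R^3$ at each time; moreover the pointwise bounds you invoke for it (\eqref{ks1}, \eqref{ks2}, \eqref{est_wt2}--\eqref{est_Zwt2}) are \emph{interior} bounds, valid only in $\q H_{[2,s_0]}$. In the exterior the source has a different profile (cf.\ proposition \ref{prop:ext_pointbound}) and cannot be fit into the Alinhac template. The paper fixes this by splitting with a cut-off $\chi(t-r)$ at the level of the \emph{undifferentiated} equation: $W_0=W_0^1+W_0^2$, where $W_0^1$ has zero data and source $(1-\chi)F_0$ supported in $\{t>r+1/2\}$, so that lemma \ref{supint} applies to $ZW_0^1$; the piece $W_0^2$ carries both the initial data \emph{and} the exterior-localized source $\chi F_0$.

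Your treatment of the data-carrying piece $V_1$ is also not correct as stated. The claim that \eqref{Z0_ext} ``feeds the interior problem along the null boundary'' does not match your decomposition: $V_1$ is a global free wave solution determined by its Cauchy data at $t=2$, and \eqref{Z0_ext} bounds the full $ZW_0$, not $V_1$. The paper controls $ZW_0^2$ by a completely different mechanism: the Klainerman--Sobolev inequality on hyperboloids (lemma \ref{Lem:KS}) bounds it in terms of the conformal energy $\ecin_2(s,W_0^2)$ and the exterior-hyperboloid piece from lemma \ref{Lem:conf_ext_hyp}; these are shown to grow at most like $\ln s$ via propositions \ref{Prop:Conf_En} and \ref{prpconfext}, yielding $|ZW_0^2|\lesssim C_0\ep\, t^{-1/2}s^{-1+\sigma}\le C_0\ep\, t^{-1}s^{\sigma}$. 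This conformal-energy ingredient, which simultaneously handles the data and the exterior part of the source, is what is missing from your sketch.
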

	\proof
	We are interested in propagating the a-priori pointwise bound \eqref{bootZ} and to recover improved estimates on $Z W_0$ in the interior region $\q H_{[2, s_0]}$, which is contained in the cone $\{t>r+1\}$.
{We consider a cut-off function $\chi\in C^\infty_0(\m R)$ such that $\chi(z)=1$ for $|z|\le 1/2$ and $\chi(z)=0$ for $|z|>1$, and decompose $W_0$ into the sum $W_0^1+W_0^2$, where $W_0^1$ and $W_0^2$ solve to the following Cauchy problems
		\[
		\Box W_0^1 = (1-\chi(t-r))F_0, \qquad  ( W_0^1, \partial_t W_0^1)|_{t=2} = (0,0)
		\]
		\[
		\Box_x W_0^2 =  \chi(t-r) F_0, \qquad (W_0^2, \partial_t W_0^2)|_{t=2} =  ( W_0, \partial_t  W_0)|_{t=2}.
		\]
		with $F_0$ given by
		\[
		F_0  =\int_{\m S^1}\mathbf{N}(W,W) dy +\int_{\m S^1} \partial_yu\cdot\partial_y\Wt\, dy .
		\]
		The scope of such decomposition is to estimate $ZW_0^1$ and $ZW_0^2$ separately and in particular to apply lemma \ref{supint} to $ZW_0^1$, which is solution to a wave equation with zero data and source term supported in the interior of the cone $\{t=r+1/2\}$
		\[
		\Box ZW_0^1  = (1-\chi)(t-r) ZF_0 - [Z, \chi(t-r)]F_0, \qquad (ZW_0^1, \partial_t ZW_0^1)|_{t=2} = (0,0).
		\]
		In order to do so, we need to estimate $F_0$ and $ZF_0$ in the region $t>r+1/2$. We remark that the commutator term is uniformly bounded and also supported in $\{t>r+1/2\}$.}
	
{
		We decompose each occurrence of $W$ in $\mathbf{N}(W,W)$ by means of \eqref{dec} 
		and use the null structure representation \eqref{rewrite} of $\mathbf{N}$ for the quadratic interactions $W_0\times W_0$. From \eqref{ks1},\eqref{ks2} and  \eqref{esttan} we deduce the following \small{
			\[
			\begin{aligned}
				\int_{\m S^1} |Z^{\le 1}\mathbf{N}(W_0, W_0)|\, dy & \lesssim  |\overline{\partial}Z^{\le 1}W_0| |\partial W_0| + |\partial Z^{\le 1}W_0||\op W_0| + \frac{s^2}{t^2} |\partial Z^{\le 1}W_0||\partial W_0| \lesssim \epsilon^2 t^{-2}s^{-1}
			\end{aligned}
			\]}{
			\[
			\begin{aligned}
				\int_{\m S^1} |Z^{\le 1}\mathbf{N}(W_0, \Wt)|\, dy  & \lesssim  \int_{\m S^1} |Z^{\le 1}(\partial W_0 \cdot\partial \Wt)|\, dy\lesssim  \epsilon t^{-\frac{3}{2}} \left\|\partial Z^{\le 1}\Wt \right\|_{L^2(\m S^1)} 
			\end{aligned}
			\]}
		\[
		\begin{aligned}
			\int_{\m S^1} |Z^{\le 1}\mathbf{N}(\Wt, \Wt)|\, dy + \int_{\m S^1}\left|Z^{\le 1}\left(\partial_yu\cdot\partial_y\Wt\right)\right|\, dy \lesssim \|\partial Z^{\le 1}\Wt\|_{L^2(\m S^1)} \|\partial \Wt\|_{L^2(\m S^1)}.
		\end{aligned}
		\]
		The improved bounds \eqref{est_wt2} and \eqref{est_Zwt2} on $\Wt$ coupled with the fact that $s=\sqrt{(t+r)(t-r)}$ imply that in $\{t>r+1/2\}$
		\[
		|\Ff_0(t,x)|\lesssim \ep^2 t^{-\frac{5}{2}}\langle t-r\rangle^{-\frac{1}{2}}  
		\]
		and
		\[
		| Z\Ff_0(t,x)|\lesssim \ep^2 t^{-\frac{5}{2}+\frac{\sigma}{2}}\langle t-r\rangle^{-\frac{1}{2}+\frac{\sigma}{2}} ,
		\]
		so from lemma \ref{supint} we obtain that
		\[
		|ZW_0^1(t,x)|\le C \ep^2 t^{-1}(t-r)^\sigma,
		\]
		for some constant $C=C(A,B)$ that depends quadratically on $A$ and $B$. }
	
{
		The remaining term $ZW_0^2$ is estimated via the Klainerman-Sobolev inequality of lemma \ref{Lem:KS} in terms of the higher order conformal energy of $W_0^2$ on hyperboloids. More precisely, we have that for any $(t,x)\in \q H_{[2, s_0]}$
		\[
		t^\frac{1}{2}s	|ZW_0^2(t,x)|\lesssim \ecin_2(s, W_0^2)^\frac{1}{2} +\sum_{|\gamma|\le 2}\left\| (s/t)Z \q Z^\gamma W_0^2 \right\|_{L^2_x(\q H_s \cap [T_1^s, T_2^s])}
		\]
		where $s^2=t^2-r^2$, $T_1^s = (s^2+1)/2$ is the time the hyperboloid $\q H_s$ intersects the cone $t=r+1$, and $T^s_2 = \sqrt{s^2+|y|^2}$ with $|y-x| = t/3$ (the second term in the above right hand side should be omitted when $T^s_2\le T_1^s$).
		Since the source term in the equation satisfied by $Z^\gamma W_0^2$ is supported in the exterior region $t<r+1$, we obtain from proposition \ref{Prop:Conf_En} with $\Wf_0=Z^\gamma W_0^2$, the bound \eqref{est_conf_enex} (which is valid also for $W_0^2$) with $T_0 = (s^2+1)/2$, and the smallness assumption on the initial data, that
		\[
		\ecin(s, Z^\gamma W_0^2)  \le  \ecin(2, Z^\gamma W_0^2) + C_0^2\ep^2\ln s \lesssim C_0^2\ep^2\ln s .\]
		Moreover, from lemma \ref{Lem:conf_ext_hyp} with $T_j=T_j^s$ for $j=1,2$ and the observation that $\ln (T_2^s/T_1^s)\lesssim \ln s$ we also derive that
		\[
		\sum_{|\gamma|\le 2}\left\| (s/t)Z \q Z^\gamma W_0^2 \right\|^2_{L^2_x(\q H_s \cap [T_1^s, T_2^s])} \lesssim C_0^2\ep^2\ln s.
		\]
		This concludes that
		\[
		|ZW_0^2(t,x)|\le \tilde{C}C_0\ep t^{-\frac{1}{2}}s^{-1+\sigma}
		\]
		for a universal constant $\tilde{C}$ and therefore
		\[
		|ZW_0(t,x)|\le |ZW_0^1(t,x)|+ |ZW_0^2(t,x)|\le \tilde{C} C_0\ep t^{-\frac{1}{2}}s^{-1+\sigma} + C\ep^2 t^{-1}(t-r)^\sigma \le B\ep t^{-1}s^\sigma
		\]
		if we choose $B$ sufficiently large so that $B\ge 2\tilde{C}C_0$ and $\epsilon_0$ sufficiently small so that $2C\ep \le B$.}
	
	\section{Energy Estimates in the Interior Region}
	\label{Sec: VF}

	The goal of this section is to propagate the interior energy bounds \eqref{booten1} and \eqref{booten2} on the two components $W_0$ and $\Wt$ of the solution $W$ to the Cauchy problem \eqref{eq_W}-\eqref{data_W}. We remind the reader that for any multi-index $\gamma$ the differentiated function $W^\gamma = (u^\gamma, v^\gamma)$ is solution to the equation \eqref{systeme_vf} with source term \eqref{nonlin_vf}, and that its zero-mode $W_0^\gamma$ solves the inhomogeneous wave equation \eqref{syst_u0} with source term \eqref{nonlin_0}.
	
	We first start by recovering an energy bound for the higher order conformal energy of $W_0$. Such bound follows from \eqref{booten1} and \eqref{booten2} as well as from the pointwise estimates obtained in section \ref{PointwiseEst}. It will be necessary for the propagation of \eqref{booten2}, and the computations that leads to it will be useful in the proof of proposition \ref{Prop:VF_energy0}.
	
	\begin{proposition}\label{Prop:conf_estimate}
		Assume the solution $W=(u,v)^T$ to \eqref{eq_W}-\eqref{data_W} satisfies the a-priori estimates \eqref{booten1}-\eqref{bootZ} in the interior region $\q H_{[2, s_0]}$ as well as the global exterior energy bounds \eqref{boot_energy} in the exterior region $\dext$. Then
		\begin{equation}\label{conf_bound}
			\sup_{[2, s]}\ec_k(s, W_0)\le C\ep s^{1+2\mu_k}, \qquad s\in [2, s_0], \quad k=\overline{0,4}
		\end{equation}
		where $\mu_k=\delta_k$ if $k\le 3$ and $\mu_4 = \delta_2+\delta_4$.
	\end{proposition}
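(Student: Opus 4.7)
The plan is to apply the interior conformal energy inequality of Proposition \ref{Prop:Conf_En} to $\Wf_0 = Z^\beta W_0$ for each pure Klainerman multi-index $|\beta|\le k\le 4$, sum over such $\beta$, control the null-cone boundary term using the exterior conformal bound of Proposition \ref{prpconfext}, estimate the source term perturbatively via the null structure combined with the a-priori bounds \eqref{booten1}--\eqref{bootZ} and the enhanced pointwise estimates of Section \ref{PointwiseEst}, and close via a Gronwall-type step. Denoting by $B_k(s)$ the contribution on $\CT$ that arises on the right-hand side of the conformal inequality, one obtains
\[
\ec_k(s, W_0) \lesssim \ec_k(2, W_0) + B_k(s) + \sum_{|\beta|\le k}\int_2^s \|s\, \Ff_0^\beta\|_{L^2(\hin_\tau)}\, \ec(\tau, Z^\beta W_0)^{1/2}\, d\tau,
\]
where $\Ff_0^\beta$ is the source term in \eqref{syst_u0}--\eqref{nonlin_0} with $\gamma=\beta$. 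The initial term $\ec_k(2, W_0)$ is $O(\ep^2)$ by the smallness assumption on the data, and the boundary term $B_k(s)$ is precisely the integral controlled in Proposition \ref{prpconfext} evaluated at $T_0=(s^2-1)/2$, hence $B_k(s)\lesssim C_0^2\ep^2\ln s$, which is absorbed into $C\ep s^{1+2\mu_k}$.

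The core task is to establish, for $|\beta|\le k$,
\[
\|s\,\Ff_0^\beta\|_{L^2(\hin_\tau)} \lesssim \ep^2\,\tau^{-1/2+\mu_k},
\]
so that, after integration in $\tau$ and a standard Gronwall manipulation of the form $f^2\le a+\int bf$, the claimed growth $s^{1+2\mu_k}$ emerges. I would decompose each occurrence of $W$ in $\Ff_0^\beta$ via \eqref{dec}. For the quasilinear piece $\int_{\m S^1}\partial_y u^{\gamma_1}\,\partial_y W^{\gamma_2}\, dy$ only $\ut$ and $\Wt$ contribute (since $\partial_y u_0=0$), and it is controlled using the sharp pointwise bounds of Proposition \ref{prop:improved_KG} placed in $L^\infty$ against the $L^2$ energy bound \eqref{booten2} on the remaining factor. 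For the null-form piece $\int_{\m S^1}\mathbf{N}(W^{\gamma_1}, W^{\gamma_2})\, dy$ with pure $W_0\times W_0$ products I would use the representation \eqref{rewrite1}, writing
\[
\mathbf{N}(Z^{\gamma_1}W_0, Z^{\gamma_2}W_0) = \mathcal{T}Z^{\gamma_1}W_0\cdot\partial Z^{\gamma_2}W_0 + \partial Z^{\gamma_1}W_0\cdot\mathcal{T}Z^{\gamma_2}W_0,
\]
and exploit that $s\,\q T_j Z^\gamma W_0$ is controlled by $\ec(s, Z^\gamma W_0)^{1/2}$ up to a harmless $(s/t)$-factor together with the a-priori pointwise bound \eqref{bootZ} on $ZW_0$ (used when the other factor is differentiated). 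Mixed $W_0\times \Wt$ and pure $\Wt\times \Wt$ products are handled by placing the low-order factor in $L^\infty$ via the improved bounds \eqref{est_wt2}--\eqref{est_Zwt2} and the high-order factor in $L^2$ via the assumed energy bound.

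The main obstacle will be the case $k=4$, which forces the non-trivial exponent $\mu_4=\delta_2+\delta_4$. In this regime the quadratic interactions with splitting $|\gamma_1|=|\gamma_2|=2$ require an equal partition: one factor must be placed in $L^\infty$ via Klainerman--Sobolev (giving growth $s^{\delta_{k+2}}|_{k=2}=s^{\delta_4}$ from \eqref{ks3}) while the other is placed in $L^2$ using \eqref{booten2} (giving growth $s^{\delta_2}$). There is no way around this equi-partition because the higher-order pointwise estimates of Lemma \ref{Lem:KS_estimates} only reach order $3$ in derivatives. For $k\le 3$ a more favorable split (with the top-order factor in $L^2$ and a factor of order $\le 1$ in $L^\infty$) keeps the growth at $s^{\delta_k}$. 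A subsidiary technical point is to check that the commutator terms between $Z^\beta$ and $\mathbf{N}$ remain null forms, which is already done in the lemma following \eqref{systeme_vf}, so the null structure is preserved at every order and the above estimates apply to all components of $\Ff_0^\beta$.
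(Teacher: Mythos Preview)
Your proposal is essentially correct and follows the paper's scheme: apply Proposition~\ref{Prop:Conf_En} to $Z^\beta W_0$, bound the $\CT$-term via Proposition~\ref{prpconfext}, estimate the source, and close by Gronwall. You also correctly identify that $\mu_4=\delta_2+\delta_4$ is forced by the balanced $|\gamma_1|=|\gamma_2|=2$ split in the $\Wt\times\Wt$ interactions, where one factor goes in $L^\infty$ via \eqref{ks3} (growth $s^{\delta_4}$) and the other in $L^2$ via \eqref{booten2} (growth $s^{\delta_2}$).

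The one substantive difference is in the treatment of the $W_0\times W_0$ null terms. The paper uses the representation \eqref{rewrite} with $\op$ and bounds this piece purely through the \emph{standard} interior energy $\enint$ via the pointwise estimates \eqref{ks1}--\eqref{ks2}, obtaining $\|\mathbf{N}(W_0^{\gamma_1},W_0^{\gamma_2})\|_{L^2(\hin_\tau)}\lesssim \ep\tau^{-3/2}\enint_k(\tau,W)^{1/2}$ with no appearance of $\ec$ on the right-hand side. Your proposal uses \eqref{rewrite1} with $\mathcal{T}$ and feeds the conformal energy back into its own inequality. This can be made to work, but two points need correcting. First, your claim that $\|s\,\mathcal{T}_j Z^\gamma W_0\|_{L^2(\hin_\tau)}$ is controlled by $\ec(s,Z^\gamma W_0)^{1/2}$ is incomplete: writing $\mathcal{T}_j=\op_j+\frac{t-r}{t}\frac{x_j}{r}\partial_t$, the first part gives $(s/t)\Omega_{0j}$ which is indeed bounded by $\ec^{1/2}$, but the residual $s\frac{t-r}{t}\partial_t Z^\gamma W_0$ contributes $\lesssim \tau\,\enint_k^{1/2}\lesssim \ep\tau$ in $L^2$ (using $t-r\le s^2/t$ and \eqref{booten1}); after multiplying by $\|\partial Z^{\gamma_2}W_0\|_{L^\infty}\lesssim\ep\tau^{-3/2}$ this still lands on the target $\ep^2\tau^{-1/2}$, so it is harmless. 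Second, the pointwise bound you cite for the differentiated factor should be \eqref{ks1}, not \eqref{bootZ}: the latter only controls $ZW_0$ (no $\partial$), and is not what is needed here. Since $\mathcal{T}_j=\op_j+O((t-r)/t)\partial_t$, once you make these corrections your argument collapses to the paper's; the paper's formulation simply avoids the (harmless) feedback of $\ec$ into its own Gronwall inequality.
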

	\proof
	We consider here only multi-indices $\gamma$ of type $(k,k)$, i.e. $\gamma=(0, \beta)$ with $|\beta|=k$ and $\mathcal{Z}^\gamma = Z^\beta$.
	Applying proposition \ref{Prop:Conf_En} with $\Wf_0 = W_0^\gamma$ and $\Ff_0 = \Ff_0^\gamma$ we derive that
	\begin{equation}\label{conf_ineq}
		\begin{aligned}
			\sup_{[2, s]}\ecin(\tau, W_0^\gamma) & \le  \ecin(2, W_0^\gamma) + \int_2^s \left\| \tau\Ff^\gamma_0\right\|_{L^2(\hin_\tau)}\ecin(\tau, W_0^\gamma)^\frac{1}{2}d \tau\\
			& \hspace{-10pt} + \int_{\mathcal{C}_{[2, s]}}\left|
			(t+r)(\partial_t W^\gamma_0+\partial_r W^\gamma_0) + 2W^\gamma_0\right|^2+(t-r)^2(|\nabla W^\gamma_0|^2-(\partial_r W^\gamma_0)^2)\, d\sigma dt .  
		\end{aligned}
	\end{equation}
	The integral over the boundary $\q C_{[2, s]}$ corresponds to the integral in the left hand side of \eqref{est_conf_enex} for $T_0 = (s^2+1)/2$ hence
	\begin{equation}\label{boundary term}
		\int_{\mathcal{C}_{[2, s]}}\left|
		(t+r)(\partial_t W^\gamma_0+\partial_r W^\gamma_0) + 2W^\gamma_0\right|^2+(t-r)^2(|\nabla W^\gamma_0|^2-(\partial_r W^\gamma_0)^2)\, d\sigma dt  \lesssim \ep^2  \ln s.
	\end{equation}
	The different contributions to the inhomogeneous term $\Ff_0^\gamma$ are estimated separately below:
	
	\textit{1. The $\partial_y u^{\gamma_1} \cdot\partial_y W^{\gamma_2}$ terms:} we will place the factor whose index has length smaller than $k/2$ in $L^2_yL^\infty_x$ and the remaining one in $L^2_{xy}$. Using the enhanced pointwise bounds \eqref{est_wt2} and \eqref{est_Zwt}, as well as the pointwise bound \eqref{ks3} in the case where $|\gamma_1|=|\gamma_2|=2$ (which appears only if $k=4$) we see that
	\[
	\sum_{|\gamma_1|+|\gamma_2|=k\le 4}\left\|\partial_y u^{\gamma_1} \cdot\partial_y W^{\gamma_2}\right\|_{L^1_yL^2_x(\hin_\tau)}\lesssim  \ep \tau^{-\frac{3}{2}}\left[\enint_{k}(\tau, W)^\frac{1}{2} + \tau^\sigma \enint_{k-1}(W) + \nu_4 \tau^{\delta_4}\enint_2(W)\right]
	\]
	where $\nu_4=1$ if $k=4$ and 0 otherwise.
	
	\smallskip
	\textit{2. The null terms:}
	we apply the decomposition \eqref{dec}
	to both factors $W^{\gamma_1}$ and $W^{\gamma_2}$ so that
	\[
	\mathbf{N}(W^{\gamma_1}, W^{\gamma_2}) = \mathbf{N}(W_0^{\gamma_1}, W_0^{\gamma_2})+ \mathbf{N}(\Wt^{\gamma_1}, W_0^{\gamma_2})+\mathbf{N}(W_0^{\gamma_1}, \Wt^{\gamma_2})+\mathbf{N}(\Wt^{\gamma_1}, \Wt^{\gamma_2}) .
	\]
	We express $\mathbf{N}(W_0^{\gamma_1}, W_0^{\gamma_2})$ using the representation formula \eqref{rewrite}
	\[
	\mathbf{N}(W_0^{\gamma_1}, W_0^{\gamma_2}) = \op W_0^{\gamma_1} \cdot \partial W_0^{\gamma_2} + \partial W_0^{\gamma_1}\cdot\op W_0^{\gamma_2} + \frac{t-r}{t}\partial W^{\gamma_1}_0 \cdot\partial W^{\gamma_2}_0
	\]
	and estimate all terms in the above right hand side using \eqref{ks1} and \eqref{ks2} as follows
	\[
	\begin{aligned}
		\left\|\int_{\m S^1} \mathbf{N}(W_0^{\gamma_1}, W_0^{\gamma_2})\, dy \right\|_{L^2_x(\hin_\tau)}& \lesssim \left\|(t/\tau)\op \mathcal{Z}^{\le 3}W_0\right\|_{L^\infty_x(\hin_\tau)}\left\|(\tau/t)\partial \mathcal{Z}^{\le k}W_0\right\|_{L^2_x(\hin_\tau)} \\
		& + \left\|\op \mathcal{Z}^{\le k}W_0\right\|_{L^2_x(\hin_\tau)}\left\|\partial\mathcal{Z}^{\le 3}W_0\right\|_{L^\infty_x(\hin_\tau)} \\
		& \lesssim \ep \tau^{-\frac{3}{2}}\enint_{k}(\tau, W)^\frac{1}{2}.
	\end{aligned}
	\]
	We neglect the null structure for all the remaining quadratic forms.
	The products involving $W_0\times\Wt$ are equivalent given the range of $\gamma_1$ and $\gamma_2$ and are estimated using \eqref{ks1} whenever $|\gamma_1|\le 3$ and \eqref{est_wt2} otherwise
	\[
	\begin{aligned}
		\left\|\int_{\m S^1} \mathbf{N}(W_0^{\gamma_1}, \Wt^{\gamma_2})\, dy\right\|_{L^2_x(\hin_\tau)} & \le \|(t/\tau)\partial \mathcal{Z}^{\le 3}W_0\|_{L^\infty_x(\hin_\tau)}\left\|(\tau/t)\partial \mathcal{Z}^{\le k}\Wt\right\|_{L^2_{xy}(\hin_\tau)} \\
		&+ \left\|(\tau/t)\partial \mathcal{Z}^{\le 4}W_0\right\|_{L^2_{x}(\hin_\tau)}\left\|(t/\tau)\partial \Wt\right\|_{L^\infty_{xy}(\hin_\tau)} \\
		& \lesssim \ep \tau^{-\frac{3}{2}}\enint_{k}(\tau, W)^\frac{1}{2}.
	\end{aligned}
	\]
	The quadratic term involving products $\Wt\times\Wt$ is estimated as in step 1.
	
	Summing up the estimates in step 1 and 2, using the a-priori energy bounds \eqref{booten1}-\eqref{booten2}, and choosing $\sigma, \delta_{k-1}$ small so that $\sigma+\delta_{k-1}\le \delta_k$ we obtain that
	\begin{equation}\label{source_F0}
		\left\|\Ff^\gamma_0\right\|_{L^2_x(\hin_\tau)}\lesssim \ep^2 \tau^{-\frac{3}{2}+\mu_k} \qquad \text{with } 
		\mu_k = \begin{cases}
			\delta_k \quad & \text{if } k\le 3 \\
			\delta_4+\delta_2 \quad &\text{if } k=4.
		\end{cases}
	\end{equation}
	Plugging the above estimate together with \eqref{boundary term} into \eqref{conf_ineq} and using the smallness assumptions on the initial data we finally obtain
	\[
	\begin{aligned}
		\sup_{[2, s]}\ecin_{k}(\tau, W_0) & \lesssim  \ecin_{k}(2, W_0) + \ep^2 s^{2C\ep}\ln s  + \int_2^s \ep^2 \tau^{-\frac{1}{2}+\mu_k}\ecin_{k}(\tau, W_0)^\frac{1}{2}d \tau \\
		& \lesssim \ep^2 s^{1+2\mu_k} + \ep^2\sup_{[2, s]}\ecin_{k}(\tau, W_0)
	\end{aligned}
	\]
	and the result of the proposition follows choosing $\ep$ sufficiently small.
	\endproof
	
	We observe that from lemma \ref{Lem:KS}, the conformal energy bound \eqref{conf_bound} and lemma \ref{Lem:conf_ext_hyp} with $T_1 = (s^2+1)/2$ and $T_2 = \sqrt{s^2+|y|^2}$ for $|y-x|=t/3$, we have the following additional pointwise bound for the $Z$ derivatives of $W_0$ in $\q H_{[2, s_0]}$
	\begin{equation}\label{ks5}
		|Z Z^jW_0(t,x)|\lesssim \ep t^{-\frac{1}{2}}s^{-\frac{1}{2}+\mu_{j+2}}\, \quad \text{for } j=\overline{0,2}, \qquad \text{where } \mu_k = \begin{cases}
			\delta_k  \quad & \text{if } k\le 3 \\
			\delta_4+\delta_2 \quad &\text{if } k=4.
		\end{cases}
	\end{equation}
	We now have all the ingredients to propagate the a-priori energy bounds \eqref{booten1} and \eqref{booten2}.
	
	\begin{proposition}\label{Prop:VF_energy0}
		There exists a constant $A>0$ sufficiently large, some small parameters $\sigma\ll \delta_k\ll \delta_{k+1}$ for $k=\overline{1,4}$ and $\ep_0>0$ sufficiently small such that, if for any $0<\ep<\ep_0$ the solution $W=(u,v)^T$ to the Cauchy problem \eqref{eq_W}-\eqref{data_W} satisfies the a-priori bounds \eqref{booten1}-\eqref{bootZ} in the hyperbolic region $\hin_{[2, s_0]}$ and the global exterior energy bounds \eqref{boot_energy} in the exterior region $\dext$, then it also satisfies the enhanced energy bound
		\begin{equation} \label{improved_en}
			\enint_{5}(s, W_0) \le A^2\ep^2, \qquad s\in [2, s_0].
		\end{equation}
	\end{proposition}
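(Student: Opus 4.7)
The plan is to apply proposition \ref{prop:linear_wave0} to the scalar wave equation \eqref{syst_u0} satisfied by $W_0^\gamma=\int_{\m S^1}\mathcal{Z}^\gamma W\,dy$ for every multi-index $|\gamma|\le 5$, sum over $\gamma$, and control the three resulting contributions: the initial energy $\enint_5(2,W_0)$, which is $O(\ep^2)$ by the smallness assumption on the data; the boundary integral on the cone $\mathcal{C}_{[2,s]}$; and the nonlinear time integral $\int_2^s\|\Ff_0^\gamma\|_{L^2_x(\hin_\tau)}\,\enint(\tau,W_0^\gamma)^{1/2}d\tau$. Once the last two terms are bounded by $O(\ep^2)$ up to universal constants, inserting the a-priori bound \eqref{booten1} into the nonlinear term and choosing $A$ large enough will close the bootstrap directly (no Gronwall beyond square-root absorption is needed).

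For the boundary term I would use that on $\mathcal{C}_{[2,s]}$ we have $r=t-1$, so the weight $(2+r-t)^{\alpha+1}\equiv 1$, and then Cauchy--Schwarz in $y$ combined with the global exterior bound \eqref{en_ext1} yields
\[
\sum_{|\gamma|\le 5}\int_{\mathcal{C}_{[2,s]}}|\mathcal{T}W_0^\gamma|^2\,d\sigma dt\lesssim \sum_{|\gamma|\le 5}\iint_{\mathcal{C}_{[2,s]}}(2+r-t)^{\alpha+1}|\mathcal{T}W^\gamma|^2\,d\sigma dy dt\lesssim C_0^2\ep^2.
\]

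The core of the argument is a source estimate of the form $\|\Ff_0^\gamma\|_{L^2_x(\hin_\tau)}\lesssim \ep^2\tau^{-3/2+\mu}$ for every $|\gamma|\le 5$, with a small exponent $\mu<1/2$ depending on $\sigma$ and the $\delta_k$'s; this extends the computation \eqref{source_F0} from $|\gamma|\le 4$ (pure Klainerman products) to general multi-indices of length up to $5$. I would split $\Ff_0^\gamma$ via \eqref{nonlin_0} into null contributions $\int_{\m S^1}\mathbf{N}(W^{\gamma_1},W^{\gamma_2})dy$ with $|\gamma_1|+|\gamma_2|\le 5$ and quasilinear contributions $\int_{\m S^1}\partial_y u^{\gamma_1}\partial_y W^{\gamma_2}dy$ with $|\gamma_1|+|\gamma_2|=5$. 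In every product, since $|\gamma_1|+|\gamma_2|\le 5$, at least one of the two indices has length $\le 2$, so I would place the corresponding factor in $L^\infty$ using the Klainerman--Sobolev bounds \eqref{ks1}--\eqref{ks3} for the zero mode $W_0$ and the enhanced pointwise estimates \eqref{est_wt2}--\eqref{est_Zwt2} for $\Wt$, while the other factor is controlled in $L^2_x$ by the a-priori energies \eqref{booten1}--\eqref{booten2}. For the pure $W_0\times W_0$ interactions I would invoke the null-form representation \eqref{rewrite} so that each product contains either a $\bar\partial$ derivative or the factor $(t-r)/t=s^2/(t(t+r))$, both of which supply the extra power of $t^{-1}$ needed for integrability, exactly as in step 2 of the proof of proposition \ref{Prop:conf_estimate}. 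The quasilinear terms automatically involve only $\partial_y\ut$ since $\partial_y u_0=0$, so the improved decay of $\Wt$ from proposition \ref{prop:improved_KG} applies there too.

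The delicate point I expect to face is the arrangement of the small parameters. Terms such as $Zu_0\cdot\partial_y^2\Wt$, which appear when $\mathcal{Z}^\gamma$ contains a high-order pure product of Klainerman vector fields, can only be estimated using the a-priori pointwise bound $|ZW_0|\lesssim B\ep\,t^{-1}s^\sigma$ from \eqref{bootZ} together with the Klein--Gordon-type bound $\|\partial_y^2\Wt\|_{L^2(\m S^1)}\lesssim \ep t^{-3/2}$ from proposition \ref{prop:improved_KG}. This forces $\sigma$ to be chosen strictly smaller than each $\delta_k$, and simultaneously the $\delta_k$'s must remain small enough that the accumulated exponent $\mu$ in the source estimate stays below $1/2$ for every multi-index of length up to $5$. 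Verifying that the hierarchy $0<\sigma\ll\delta_1\ll\dots\ll\delta_5\ll 1$ is compatible with every contribution to $\Ff_0^\gamma$ is the main bookkeeping challenge of the proof.
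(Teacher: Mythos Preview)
Your proposal is correct and follows the same route as the paper: apply proposition~\ref{prop:linear_wave0} to $W_0^\gamma$, control the cone flux by the global exterior bound (the paper invokes proposition~\ref{prop:est_enex_hyp}, which amounts to your observation that the weight equals $1$ on $\mathcal{C}_{[2,s]}$), and estimate $\|\Ff_0^\gamma\|_{L^2_x(\hin_\tau)}$ by $\ep\,\tau^{-3/2+\delta_4}\enint_5(\tau,W)^{1/2}$ exactly as you describe, extending \eqref{source_F0} from $|\gamma|\le 4$ to $|\gamma|=5$.

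Your last paragraph, however, anticipates a difficulty that does not arise here. The terms $Zu_0\cdot\partial_y^2\Wt$ \emph{do not appear} in $\Ff_0^\gamma$: as you yourself noted, the quasilinear part of \eqref{nonlin_0} is $\int_{\m S^1}\partial_y u^{\gamma_1}\,\partial_y W^{\gamma_2}\,dy$, and $\partial_y u_0=0$ kills the zero-mode contribution. Equivalently, $Zu_0$ is independent of $y$, so $\int_{\m S^1}Zu_0\cdot\partial_y^2\Wt^{\gamma_2}\,dy=Zu_0\cdot\int_{\m S^1}\partial_y^2\Wt^{\gamma_2}\,dy=0$. These slow-decaying products survive only in $\Ff^\gamma-\Ff_0^\gamma$ and are the crux of proposition~\ref{Prop:energy_KG}, not of the present one; this is precisely the structural reason the zero-mode energy closes uniformly while the $\Wt$ energies carry the growth $s^{2\delta_k}$. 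Consequently neither \eqref{bootZ} nor the hierarchy $\sigma\ll\delta_k$ is needed to prove \eqref{improved_en}; the only small exponents that enter are $\delta_4$ from the source estimate and $\delta_5$ from inserting \eqref{booten2} (not just \eqref{booten1}, since $\enint_5(\tau,W)$ includes $\enint_5(\tau,\Wt)$) into the time integral.
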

	\proof
	For any $|\gamma|\le 5$, the equation \eqref{syst_u0} satisfied by $W_0^\gamma$ has the same structure as the inhomogeneous wave equation \eqref{linear_wave0}, therefore proposition \ref{prop:linear_wave0} with $\Wf_0=W_0^\gamma$ and $\Ff_0 = \Ff_0^\gamma$ implies that
	\[
	\enint(s, W_0^\gamma) \lesssim \enint(2, W^\gamma_0) + \int_{\CT} |\mathcal{T}W^\gamma_0|^2  d\sigma dt + \int_{2}^{s} \|\Ff_0^\gamma\|_{L^2_x(\hin_\tau)}\enint(\tau, W_0^\gamma)^\frac{1}{2}\, d\tau
	\]
	for all $s\in [2, s_0]$. The implicit constant in the above right hand side is independent of $s_0$. 
	
	The integral over the boundary $\CT$ has already been proved to be finite and small in proposition \ref{prop:est_enex_hyp}. The source term $\Ff_0^\gamma$ has also already been estimated in the case $|\gamma|\le 4$ (see \eqref{source_F0} in the previous proposition). When $|\gamma|=5$ we simply use the estimate \eqref{ks3} to deduce
	\[
	\sum_{\substack{|\gamma_1|+|\gamma_2|=5 \\ |\gamma_1|\ge 1}}\left\|\partial_y u^{\gamma_1}\cdot\partial_yW^{\gamma_2}\right\|_{L^1_yL^2_x(\hin_\tau)} + \left\|\mathbf{N}(\Wt^{\gamma_1}, \Wt^{\gamma_2})\right\|_{L^1_yL^2_x(\hin_\tau)} \lesssim \ep \tau^{-\frac{3}{2}+\delta_4}\enint_{5}(t, W)^\frac{1}{2}
	\]
	and \eqref{ks1}, \eqref{ks2} and \eqref{est_wt2} 
	\[
	\sum_{\substack{|\gamma_1|+|\gamma_2|\le 5}}	\left\|\mathbf{N}(W_0^{\gamma_1}, W_0^{\gamma_2})\right\|_{L^1_yL^2_x(\hin_\tau)} + \left\|\mathbf{N}(W_0^{\gamma_1}, \Wt^{\gamma_2})\right\|_{L^1_yL^2_x(\hin_\tau)}\lesssim \ep \tau^{-\frac{3}{2}}\enint_{5}(s, W)^\frac{1}{2}.
	\]
	Therefore
	\begin{equation}\label{estF0}
		\sum_{|\gamma|\le 5}	\|\Ff_0^\gamma\|_{L^2_x(\hin_\tau)}\lesssim \ep \tau^{-\frac{3}{2}+\delta_4}\enint_{5}(\tau, W)^\frac{1}{2}
	\end{equation}
	and from the a-priori energy bound \eqref{booten2} and the exterior energy bound \eqref{en_ext1} we obtain that, for some fixed $K>1$ and some universal constant $C>0$
	\[
	\begin{aligned}
		\enint_{5}(s, W_0)& \le \enint_{5}(2, W_0) + \frac{C^2_0\ep^2}{K} + \int_{2}^{s} A^2C\ep^3 \tau^{-\frac{3}{2}+\delta_4 + \delta_5}\, d\tau \\
		& \le \enint_{5}(2, W_0)  + \frac{C^2_0\ep^2}{K} + A^2C\ep^3.
	\end{aligned}
	\]
	The wished improved energy bound then follows choosing $K=3$, $\ep_0$ small such that $3C \ep^2_0<1$ and $A\ge C_0$ sufficiently large so that
	\[
	\enint_{5}(2, W_0)\le \frac{A^2\ep_0^2}{3}.
	\]
	
	%
	%
	%
	%
	\endproof

	\begin{proposition} \label{Prop:energy_KG}
		There exists a constant $A>0$ sufficiently large, some small parameters $\sigma\ll \delta_k\ll \delta_{k+1}$ for $k=\overline{1,4}$ and $\ep_0>0$ sufficiently small such that, if for any $0<\ep<\ep_0$ the solution $W=(u,v)^T$ to the Cauchy problem \eqref{eq_W}-\eqref{data_W} satisfies the a-priori bounds \eqref{booten1}-\eqref{bootZ} in the hyperbolic region $\hin_{[2, s_0]}$ and the global exterior energy bounds \eqref{boot_energy} in the exterior region $\dext$, then it also satisfies the enhanced energy bound
		\begin{equation}\label{improv_enWt}
			\enint_{5, k}(s, \Wt) \le A^2\ep^2
			\, s^{2\delta_k}, \qquad s \in [2, s_0].
		\end{equation}
	\end{proposition}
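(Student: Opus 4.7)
The plan is to apply the interior energy inequality of Proposition 3.4 to $\Wf = W^\gamma$ for each multi-index $\gamma$ of type $(5,k)$, noting that the energy of $W^\gamma$ splits as $\enint(s,W^\gamma) = \enint(s,W_0^\gamma) + \enint(s,\Wt^\gamma)$ and $\enint_{5,k}$ picks up only the contributions with at most $k$ Klainerman fields. The hypotheses \eqref{boot_u}--\eqref{boot_ut} on $u$ are already guaranteed by the pointwise bounds of Lemma 5.2 and Proposition 5.5. The boundary integral on $\CT$ is absorbed using the global exterior control provided by Proposition 4.6, which was proved to be of size $O(C_0^2\ep^2)$.

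The key work is to estimate the source term $\Ff^\gamma$ given by \eqref{nonlin_vf} in $L^2_{xy}(\hin_\tau)$. The null terms $\mathbf{N}(W^{\gamma_1},W^{\gamma_2})$ split along the decomposition \eqref{dec}: the $W_0\times W_0$ interactions are handled through the null-structure representation \eqref{rewrite} combined with the Klainerman--Sobolev pointwise bounds \eqref{ks1}--\eqref{ks2}, yielding a $\tau^{-3/2}$ decay; the mixed and $\Wt\times\Wt$ interactions are controlled by placing one factor in $L^\infty$ via the enhanced estimates \eqref{est_wt2}--\eqref{est_Zwt2} on $\Wt$ and the other factor in $L^2_{xy}$ in terms of the lower-order energies, using the hierarchy $\delta_{k-1}+\sigma \le \delta_k$ to close. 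The quasilinear contributions $u^{\gamma_1}\partial_y^2 W^{\gamma_2}$ split into two sub-cases according to whether $\gamma_1$ contains a flat derivative (in which case they behave like the null terms after absorbing a $\partial$) or is a pure product of Klainerman fields.

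The main obstacle is the pure-Klainerman case $Z^\beta u_0 \cdot \partial_y^2\Wt^{\tilde\gamma_2}$ with $|\beta|=k$, precisely the slow-decaying term identified in the overview. Here a pointwise $L^\infty$ bound on $Z^\beta u_0$ is not available, so we bound its $L^2_x(\hin_\tau)$ norm by the square root of the higher-order conformal energy $\ec_k(s,W_0)$, controlled by Proposition 6.1 as $\ep s^{1/2+\mu_k}$, and pair it with the sharp pointwise bound on $\partial_y^2\Wt$ from \eqref{est_wt2}, which gives $\tau^{-3/2}$. The resulting contribution is $\ep^2\tau^{-1+\mu_k}$, and feeding this into the energy inequality produces
\[
\enint_{5,k}(s,\Wt) \lesssim \enint_{5,k}(2,\Wt) + C_0^2\ep^2 + \int_2^s \ep^2 \tau^{-1+\mu_k}\,\enint_{5,k}(\tau,\Wt)^{1/2}\,d\tau .
\]

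To close the bootstrap it is therefore enough, by a standard Gronwall-type argument on $\sup_{[2,s]}\tau^{-2\delta_k}\enint_{5,k}(\tau,\Wt)$, to ensure that $\mu_k \le \delta_k$, i.e.\ that the small parameters satisfy $\delta_2+\delta_4 \le \delta_5$ (coming from the worst case $k=5$ where $\mu_4$ arises in the conformal bound) together with the previously used relations $\sigma \ll \delta_j \ll \delta_{j+1}$. After choosing $A$ sufficiently large to absorb the initial data and $C_0^2\ep^2$ terms and $\ep_0$ sufficiently small to absorb the cubic factor $A\ep$, we obtain the enhanced bound $\enint_{5,k}(s,\Wt) \le A^2\ep^2 s^{2\delta_k}$, as desired.
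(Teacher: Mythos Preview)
Your overall strategy is right, but the treatment of the pure-Klainerman quasilinear commutators is incomplete and, as stated, does not close. You single out only the split $|\beta_1|=k$ in $Z^{\beta_1}u_0\cdot\partial_y^2\Wt^{\gamma_2}$ and propose to always place $Z^{\beta_1}u_0$ in $L^2_x$ via the conformal energy while putting $\partial_y^2\Wt^{\gamma_2}$ in $L^\infty$ via \eqref{est_wt2}. But the commutator produces \emph{all} splits $1\le k_1\le k$ with $|\gamma_2|=n-k_1$, and for small $k_1$ the factor $\partial_y^2\Wt^{\gamma_2}$ carries too many derivatives to be placed in $L^\infty$: e.g.\ for $\gamma$ of type $(5,k)$ with $k_1=1$ one would need a pointwise bound on $\partial_y^2\q Z^{4}\Wt$, which is beyond the available regularity (Klainerman--Sobolev costs two more vector fields). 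Even when $|\gamma_2|\le 3$ so that \eqref{ks3} applies, the resulting growth $s^{\delta_{k_2+2}}$ combined with $\ec_{k_1-1}$ does not fit into the hierarchy $\delta_k$ without further work.

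The paper closes these cases by reversing the roles: when $|\gamma_2|$ is large one puts $\partial_y^2\Wt^{\gamma_2}$ in $L^2_{xy}$ via the a-priori energies $\enint_{n,k_2}$ and estimates $Z^{k_1}u_0$ in $L^\infty$ using the bootstrap pointwise bound \eqref{bootZ} on $ZW_0$ (for $k_1=1$) and the conformal-energy-derived pointwise bound \eqref{ks5} on $Z^{\le 3}W_0$ (for $k_1=2,3$). This is precisely where the bootstrap hypothesis \eqref{bootZ} is essential for the energy argument, not merely for verifying the coefficient hypotheses of the linear inequality. A genuine case-by-case bookkeeping in $k_1$ is needed so that each contribution lands at $\ep^2\tau^{-1+\delta_k}$; your uniform scheme misses this. (Two minor points: the conformal energy controlling $Z^{k}u_0$ is $\ec_{k-1}$, not $\ec_k$, so the exponent you obtain should be $\mu_{k-1}$; and the linear inequality you want is Proposition~\ref{Prop:en_lin} applied to $\Wt^\gamma$ with source $\Ff^\gamma-\Ff_0^\gamma$, rather than Proposition~\ref{prop:linear_wave0}.)
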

	\proof
	We start by considering a multi-index $\gamma$ of type $(n,k)$ with $k\le n\le 5$ and compare the equation satisfied by $\Wt^\gamma= (\mathtt{u}^\gamma, \mathtt{v}^\gamma)^T$ with the linear inhomogeneous equation \eqref{linearized}. We have that 
	\[
	\Box_{x,y} \Wt^\gamma + u\, \partial^2_y \Wt^\gamma = \mathbf{F}^\gamma -  \mathbf{F}^\gamma_0, \qquad (t,x,y)\in \m R^{1+3}\times \m S^1
	\]
	so applying proposition \ref{Prop:en_lin} with $\Wf = \Wt^\gamma$ and $\Ff$ replaced by $\Ff^\gamma-\Ff^\gamma_0$ we derive the following inequality
	\begin{equation}\label{ineq_WtX}
		\begin{aligned}
			\enint(s, \Wt^\gamma)& \lesssim \enint(2, \Wt^\gamma) + \iint_{\CT} |\mathcal{T}\Wt^\gamma|^2 + |\partial_y \Wt^\gamma|^2 d\sigma dydt \\
			&+ \int_{2}^{s}\|\Ff^\gamma- \Ff^\gamma_0\|_{L^2_{xy}(\hin_\tau)} \enint(\tau, \Wt^\gamma)^\frac{1}{2}\, d\tau.
		\end{aligned}
	\end{equation}
	All quadratic terms in $\Ff^\gamma$ can be estimated similarly to those in $\Ff_0^\gamma$ -- which satisfy the bound \eqref{estF0} -- except for the products $u^{\gamma_1}_0\cdot\partial^2_y W^{\gamma_2}$ when $\mathcal{Z}^{\gamma_1} = Z^{\beta_1}$ is a pure product of Klainerman vector fields, obtained after using the decomposition \eqref{dec} on $u^{\gamma_1}$. The reason is that the $L^2_x$ norm of $Z^{\beta_1}u_0$ is not controlled by the higher order energies of $W_0$ but rather by the higher order conformal energy of $W_0$ (up to multiplication with a factor $(s/t)$).
	Those are the only terms that need to be analyzed here since the argument already used in proposition \ref{Prop:VF_energy0} coupled with the Poincar\'e's inequality shows that 
	\[
	\sum_{\substack{|\gamma_1|+|\gamma_2|=|\gamma| \\ |\gamma_2|<|\gamma|}} \left\| \ut^{\gamma_1}\cdot\partial^2_y \Wt^{\gamma_2}\right\|_{L^2_{xy}(\hin_\tau)} + \sum_{|\gamma_1|+|\gamma_2|\le |\gamma|}\left\|\mathbf{N}(W^{\gamma_1}, W^{\gamma_2})\right\|_{L^2_{xy}(\hin_\tau)} \lesssim  \ep \tau^{-\frac{3}{2}+\delta_4} \enint_{5}(\tau, W)^\frac{1}{2}.
	\] 
	Depending on the starting multi-index $\gamma_1 = (\alpha_1, \beta_1)$ we distinguish between two cases:
	
	\smallskip
	\textit{1. The case $|\alpha_1|>0$}: here we can
	write $u_0^{\gamma_1}=\partial u_0^{\tilde{\gamma}_1}$ for some other index $\tilde{\gamma}_1$ with $|\tilde{\gamma}_1| = |\gamma_1|-1$ and estimate the product $\partial u_0^{\tilde{\gamma}_1}\cdot\partial^2_y \Wt^{\gamma_2}$ as done for $\mathbf{N}(W_0^{\gamma_1}, \Wt^{\gamma_2})$ in the proof of proposition \ref{Prop:VF_energy0}.
	
	\smallskip
	\textit{2. The case $|\alpha_1|=0$}: in this case $\gamma_1$ is of type $(k_1, k_1)$ with $k_1\le k\le n$ and $\gamma_2$ is of type $(n-k_1, k_2)$ with $k_1+k_2=k$. We
	write $u_0^{\gamma_1}=Zu_0^{\tilde{\gamma}_1}$ for some other index $\tilde{\gamma}_1$ of type $(k_1-1, k_1-1)$ and distinguish between the different values $k_1$ can take.
	
	If $k_1  = n$ then $|\gamma_2|=0$ and we get from the pointwise bound \eqref{est_wt2} and the conformal energy bound \eqref{conf_bound} that
	\[
	\left\| Zu_0^{\tilde{\gamma}_1}\cdot\partial^2_y \Wt \right\|_{L^2_{xy}(\hin_\tau)}\lesssim \left\|\partial^2_y \Wt \right\|_{L^2_yL^\infty_x(\hin_\tau)}\ec_{n-1}(\tau, W_0)^\frac{1}{2}\lesssim \ep^2 \tau^{-1 + \mu_{n-1}}.
	\]
	
	If $k_1 = n-1$ we have $|\gamma_2|=1$, so from the pointwise bounds \eqref{est_w3}, \eqref{est_Zwt2} and the conformal energy bound \eqref{conf_bound}
	\[
	\left\| Zu_0^{\tilde{\gamma}_1}\cdot\partial^2_y \Wt^{\gamma_2} \right\|_{L^2_{xy}(\hin_\tau)}\lesssim \left\|\partial^2_y \Wt^{\gamma_2} \right\|_{L^2_yL^\infty_x(\hin_\tau)}\ec_{n-2}(\tau, W_0)^\frac{1}{2}\lesssim \ep^2 \tau^{-1 + \mu_{n-2}+\sigma}.
	\]
	
	If $k_1 = n-2$ then $|\gamma_2|=2$ and $n=\overline{3,5}$. We distinguish the following cases depending on if $\mathcal{Z}^{\gamma_2}$ is of the form $\partial^2, \partial Z$ or $Z^2$.
	
	\textit{a. The case $\q Z^{\gamma_2} = \partial^2$}: here we use \eqref{ks3} and \eqref{conf_bound} to derive that
	\[
	\left\| Zu_0^{\tilde{\gamma}_1}\cdot \partial^2_y\Wt ^{\gamma_2}\right\|_{L^2_{xy}(\hin_\tau)}\lesssim \left\|\partial^2_y \partial^2 \Wt\right\|_{L^2_yL^\infty_x(\hin_\tau)}\ec_{n-3}(\tau, W_0)^\frac{1}{2}\lesssim \ep^2 \tau^{-1 + \mu_{n-3}+\delta_2}.
	\]
	
	\textit{b. The case $\q Z^{\gamma_2} = \partial Z$}: when $n=3$ we have $k_1=1$ and hence $|\tilde{\gamma}_1|=0$. From the pointwise bound \eqref{bootZ} and the a-priori energy bound \eqref{booten2} we find
	\[
	\left\| Zu_0 \cdot \partial^2_y \Wt^{\gamma_2}\right\|_{L^2_{xy}(\hin_\tau)}\lesssim \|Zu_0\|_{L^\infty(\hin_\tau)}\enint_{3,1}(\tau, W)^\frac{1}{2}\lesssim \ep^2 \tau^{-1+\sigma + \delta_1}.
	\]
	
	When $n=\overline{4,5}$ we use \eqref{ks3} and \eqref{conf_bound} to derive that
	\[
	\left\| Zu_0^{\tilde{\gamma}_1}\cdot \partial^2\Wt^{\gamma_2} \right\|_{L^2_{xy}(\hin_\tau)}\lesssim \left\|\partial^2_y \partial Z \Wt\right\|_{L^2_yL^\infty_x(\hin_\tau)}\ec_{n-3}(\tau, W_0)^\frac{1}{2}\lesssim \ep^2 \tau^{-1 + \mu_{n-3}+\delta_3}.
	\]
	
	\textit{c. The case $\q Z^{\gamma_2}=Z^2$}: when $n=3$ we estimate $Z u_0^{\tilde{\gamma}_1}$ using \eqref{bootZ} while for $n=\overline{4,5}$ we use \eqref{ks5}. Together with the a-priori bound \eqref{booten2} these give that
	\[
	\left\| Zu^{\tilde{\gamma_1}}_0 \cdot \partial^2_y \Wt^{\gamma_2}\right\|_{L^2_{xy}(\hin_\tau)}\lesssim \|Zu^{\tilde{\gamma_1}}_0\|_{L^\infty(\hin_\tau)}\enint_{3,2}(\tau, W)^\frac{1}{2}\lesssim \ep^2 \tau^{-1+\nu_{n} + \delta_2}.
	\]
	where $\nu_3=\sigma$, $\nu_4 = \mu_3$ and $\nu_5 = \mu_4$.
	
	The final cases to treat correspond to $k_1 = n-3$ and $|\gamma_2|=3$, which appears for $n=\overline{4,5}$, and $k=n-4$ and $|\gamma_2|=4$ which only appears when $n=5$. If $k_1=1$ then $|\gamma_2|=n-1$ and $n=\overline{4,5}$, so using \eqref{booten2} and \eqref{bootZ} we obtain
	\[
	\left\|Zu_0 \cdot \partial^2_y \Wt^{\gamma_2}\right\|_{L^2_{xy}(\hin_\tau)}\lesssim \|Zu_0\|_{L^\infty(\hin_\tau)}\enint_{n,n-1}(\tau, W)^\frac{1}{2}\lesssim \ep^2 \tau^{-1+\sigma+\delta_{n-1}}, \quad n =\overline{4,5}.
	\]
	If $k_1=2$ then $|\gamma_2|=3$ and $n=4$, so the bounds \eqref{booten2} and \eqref{ks5} yield
	\[
	\left\|Z^2u_0 \cdot \partial^2_y \Wt^{\gamma_2}\right\|_{L^2_{xy}(\hin_\tau)}\lesssim \|Z^2u_0\|_{L^\infty(\hin_\tau)}\enint_{4,3}(\tau, W)^\frac{1}{2}\lesssim \ep^2 \tau^{-1+2\delta_3}.
	\]
	Choosing appropriately $\sigma\ll \delta_k\ll \delta_{k+1}$ for $k=\overline{1,4}$ so that $\delta_{k+1}$ is bigger than some linear combination of $\sigma$ and $\delta_j$ with $j\le k$, we finally obtain that
	\[
	\left\|u_0^{\gamma_1}\cdot \partial^2_y \Wt^{\gamma_2}\right\|_{L^2_{xy}(\hin_\tau)}\lesssim \ep^2 \tau^{-1+\delta_k}
	\]
	with an implicit constant that depends on $A$ and $B$. The same bound holds then true for $\Ff^\gamma$ so plugging it into \eqref{ineq_WtX} together with the estimate for $\Ff_0^\gamma$, the exterior energy estimate \eqref{en_ext1}, and the a-priori energy bounds \eqref{booten2}, gives us that
	\[
	\enint_{5,k}(s, \Wt)\le  C\enint_{5,k}(2, \Wt) + \frac{CC_0^2\ep^2}{2} + \int_2^s  A^2C\ep^3 \tau^{-1+2\delta_k}\, d\tau.
	\]
	The end of the proof follows finally by choosing appropriately the constants $A$ and $\ep_0$.
	\endproof

	\bibliographystyle{abbrv}
	\bibliography{Biblio_Anna}{}
	
\end{document}